\theoremstyle{plain}
\newtheorem{theorem}{Theorem}[section]
\newtheorem{definition}[theorem]{Definition}
\newtheorem{lemma}[theorem]{Lemma}
\newtheorem{corollary}[theorem]{Corollary}
\newtheorem{proposition}[theorem]{Proposition}
\theoremstyle{remark}
\newtheorem{remark}[theorem]{Remark}
\numberwithin{equation}{section}
\newcommand{\C}{\mathbb{C}}
\newcommand{\R}{\mathbb{R}}
\newcommand{\N}{\mathbb{N}}
\newcommand{\F}{\mathcal{F}}
\renewcommand{\Re}{\operatorname{Re}}
\newcommand{\I}{\infty}
\newcommand{\abs}[1]{\left\lvert #1\right\rvert}
\newcommand{\norm}[1]{\left\lVert #1\right\rVert}
\newcommand{\Lebn}[2]{\left\lVert #1 \right\rVert_{L^{#2}}}
\newcommand{\Jbr}[1]{\left\langle #1 \right\rangle}
\newcommand{\IN}{\quad\text{in }}
\def\({\left(}
\def\){\right)}
\def\<{\left\langle}
\def\>{\right\rangle}
\def\le{\leqslant}
\def\ge{\geqslant}
\def\d{{\partial}}
\def\l{\lambda}
\newcommand{\g}{\gamma}
\newcommand{\eps}{\varepsilon}
\newcommand{\ucz}{u_{0,\mathrm{c}}}
\newcommand{\uc}{u_{\mathrm{c}}}
\newcommand{\pst}{p_{\mathrm{St}}}
\begin{document}
\title[Focusing mass-subcritical NLS equation]
{On minimal non-scattering solution for focusing mass-subcritical
nonlinear Schr\"odinger equation
}
\author{Satoshi Masaki}
\date{}
\maketitle
\vskip-5mm
\centerline{Laboratory of Mathematics, Institute of Engineering,
Hiroshima University,}
\centerline{Higashihiroshima Hirhosima, 739-8527, Japan}
\centerline{masaki@amath.hiroshima-u.ac.jp}

\begin{abstract}
We consider time global behavior of solutions to the focusing
mass-subcritical NLS equation in weighted $L^2$ space. We prove that
there exists a threshold solution such that
(i) it does not scatter;
(ii) with respect to a certain scale-invariant quantity,
this solution attains minimum value in all non-scattering solutions.
In the mass-critical case, it is known that ground states are this kind of
threshold solution. However, in our case, it
turns out that the above threshold solution is not a standing-wave solution.
\end{abstract}

\section{Introduction}
In this article, we study the initial value problem
of the following nonlinear Schr\"odinger equation
\begin{equation}\tag{NLS}\label{eq:NLS}
\left\{
\begin{aligned}
	&i \d_t u + \Delta u = \l |u|^{p-1} u, \quad (t,x)\in \R^{1+N}\\
	& u(0)=u_0 \in \F H^1,
\end{aligned}
\right.
\end{equation}
where $N\ge 1$ and $\l=-1$.
The initial data $u_0$ is taken from weighted $L^2$ space
\[
	\F H^1=\F H^1(\R^N) :=\{ f\in L^2(\R^N) \ |\ xf \in L^2 (\R^N) \}
\]
with norm
\[
	\norm{f}_{\F H^1}^2 = \Lebn{f}2^2 + \Lebn{xf}2^2.
\]
We treat short range,
mass-subcritical, and $\F \dot{H}^1$-supercritical case; 
\[
	\max\(1+\frac2N,1+ \frac4{N+2}\) < p < 1 + \frac4N.
\]
It is known that the initial value problem \eqref{eq:NLS}
is globally well-posed in 
$\F H^1$ (See, \cite{NO} and reference therein).
More precisely, for any $u_0 \in \F H^1$ there exists
a solution $u(t)$ such that $e^{-it\Delta}u(t)\in C (\R, \F H^1)$ and 
conserves mass
\[
	M[u(t)] := \Lebn{u(t)}2^2.
\]
Let us refer a solution in this class as an $\F H^1$-solution in what follows.
Remark that, by conservation of mass, $\F H^1$-solutions do not blow up in finite time.
The equation \eqref{eq:NLS} has the following scaling;
if $u(t,x)$ is a solution to \eqref{eq:NLS} then
for all $\omega>0$
\begin{equation}\label{eq:scaling}
	u_\omega (t,x) = \omega^{\frac{2}{p-2}}u(\omega^2 t, \omega x)
\end{equation}
is also a solution to \eqref{eq:NLS}.
Hidano proved existence of small radial solutions of \eqref{eq:NLS}
for data in a scale-invariant homogeneous Sobolev space
$\dot{H}^{-s}$ in \cite{Hi}.

Our concern is global behavior of $\F H^1$-solutions of \eqref{eq:NLS}.
In principle, behavior of a solution
is governed by a balance between
dispersive effect by free Schr\"odinger evolution
and nonlinear interaction.
If the linear dispersive effect becomes dominant for large time, 
then a solution asymptotically behaves as a free solution (scattering).
For the precise definition of scattering, see Definition \ref{def:scatter}, below.
In the case where
 $1+\frac2N< p \le1+\frac4N$ and $p\ge1+\frac4{N+2}$,
it is known that
 an $\F H^1$-solution scatters in $\F H^1$ for both time directions,
provided its initial value is sufficiently small in $\F H^1$ sense
(see \cite{CW1,GOV,NO}).
Notice that smallness of datum gives that of corresponding solution, which
is closely related to weakness of nonlinear effect relative to linear effect.
We also remark that the assumption $p>1+\frac2N$ is essential
for considering scattering phenomenon since it is known that
if $p\le 1+ \frac2N$ then
any non-trivial solution does not scatter (see \cite{Ba,St}).
In the defocusing case $\l=+1$, 
it is further known that, for any data $u_0 \in \F H^1$, the solution $u$ scatters
for both positive and negative time as long as $1+\frac4N>p\ge \pst$, where
\begin{equation}\label{asmp:p}
	\pst:= 1 + \frac{2-N  + \sqrt{N^2+12N+4}}{2N},
\end{equation}
see \cite{Tsu2,CW1,NO}.
The lower bound $p_{\mathrm{St}}$, which is sometimes referred to as a Strauss exponent,
is a root of $Np^2-(N+2)p-2=0$.
Remark that $\pst > \max(1+\frac2N,1+\frac4{N+2})$
for all $N\ge1$.
However, the situation differs
in the focusing case $\l=-1$ because the nonlinear interaction conflicts
with linear dispersive effect.
When datum is not small, that is, when nonlinear effect is not weak,
a solution does not necessarily scatter.
A typical example of non-scattering solution
is a standing wave solution $e^{i\omega^2 t} \varphi_\omega(x)$,
where $\omega>0$ and $\varphi_\omega (x) = \omega^{\frac2{p-1}}\psi(\omega x)$ with
\begin{equation}\label{eq:elliptic}
	-\Delta \psi + \psi = |\psi|^{p-1}\psi.
\end{equation}
There exists a unique positive radial
solution $Q(x)$ to \eqref{eq:elliptic}
called the ground state, provided
$1<p < 1 + \frac{4}{N-2}$ ($1<p<\I$ if $N=1,2$),
see \cite{CazBook} and references therein.
It is characterized as the solution minimizing the energy
\begin{equation}\label{eq:energy}
	E[u(t)]:= \frac12 \Lebn{\nabla u(t)}2^2 -\frac{1}{p+1} \Lebn{u(t)}{p+1}^{p+1}
\end{equation}
among all nontrivial $H^1$-solutions.

In this article, we consider the transition
between scattering and non-scattering.
First main result is existence of
a threshold solution (Theorem \ref{thm:main1}).
More precisely, we show the following:
Let us introduce
\begin{equation}\label{def:ell}
	\ell (f) := \Lebn{f}2^{\frac{N+2}{2}-\frac2{p-1}}
	\Lebn{xf}2^{\frac2{p-1}-\frac{N}2},
\end{equation}
which is well defined for $f\in \F H^1$.
If $p>\pst$ then there exists a special 
$\F H^1$-solution $\uc(t)$  such that
(i) it does not scatter for positive time; 
(ii) $\uc(t)$ attains minimum value of $\ell(u(0))$
among all non-scattering solutions.
From these respects, we refer $\uc(t)$
to as a \emph{minimal non-scattering solution}.
The second assertion gives the following sharp criteria for scattering;
if an $\F H^1$-solution $u(t)$ of \eqref{eq:NLS}
satisfies $\ell(u(0)) < \ell( \uc(0))$ then the solution $u(t)$
scatters for both positive and negative time.
Notice that $\ell(u(0))$ is invariant under the scaling \eqref{eq:scaling}.

Our result is similar to the one in the mass-critical case $p=1+\frac4N$.
Dodson shows in \cite{Do} that if an $L^2$-solution $u(t)$ satisfies
$\Lebn{u(0)}2<\Lebn{Q}2$ then the solution exists globally in time and
scatters in $L^2$ for
both positive and negative time.
In our terminology, this result can be rephrased as ``the ground states are the minimal non-scattering solutions.''
However, in the mass-subcritical case,
it turns out that the minimal non-scattering solution
is not neither a ground state nor
any other standing wave solutions,
which is our second result (Theorem \ref{thm:main2}).

In the energy-critical case $p= 1+\frac4{N-2}$ ($N\ge3$),
global behavior of
solutions of which  initial data belongs to a scale-invariant set 
\begin{align*}
	K= \{ u_0  \in \dot{H}^1 \ | {}&\ 
 E[u_0] < E[W] \}
\end{align*}
is considered,
where $W=(1+\frac{|x|^2}{N(N-2)})^{-\frac{N-2}2}$ is a solution to an elliptic equation $\Delta W + |W|^{\frac4{d-2}}W=0$.
It is shown that $K$ is written as $K=S\cup B$,
where $S$ and $B$ are invariant under the NLS flow
and satisfy $S\cap B=\emptyset$ and $S\ni 0$.
If $u_0 \in S$ then a solution of \eqref{eq:NLS}
with $u|_{t=0}=u_0$ scatters 
for both time directions,
and if $u_0 \in B$ and if either
$u_0\in \F \dot{H}^1$ or $u_0 \in L^2$ is radial
then the solution blows up in finite time.
This is first given by Kenig and Merle \cite{KM} for $3\le N\le 5$
under radial assumption, and is extended by Killip and Visan \cite{KV}
to non-radial $N\ge5$ case.
In the energy-subcritical and mass-supercritical case $1+\frac4N<p< 1+\frac4{N-2}$
($1+\frac4N<p<\I$ if $N=1,2$), 
a similar classification result is obtained for $H^1$-solutions belonging to 
\begin{align*}
	L= \{ u_0  \in H^1 \ | {}&\ M[u_0]^{\frac2{p-1} - \frac{N-2}2 }
 E[u_0]^{ \frac{N}2 - \frac2{p-1} } \\
 &{} <M[Q]^{\frac2{p-1} - \frac{N-2}2 }
 E[Q]^{ \frac{N}2 - \frac2{p-1}} \},
\end{align*}
see \cite{AN,HR} (see also \cite{DHR,FXC} for scattering part).

In this article, we do not work with $H^1$-solutions in the following two respects.
First, one can find an arbitrarily small (in $H^1$ sense)
non-scattering solution in  
the family of ground state solutions.
Hence, it seems difficult to yield any classification 
similar to those in previous results.
Second, a sufficient condition for scattering is  
boundedness of a scale-invariant space-time norm of the form
$\norm{u}_{L^\rho_t(\R,L^\gamma_x(\R^N))}$.
In the mass-subcritical case,
this quantity is not necessarily bounded for $H^1$-solutions.
An answer to these problems is to work with $\F H^1$-solutions.
However, to do so,
we must recast
a concentration-compactness argument, which 
is first carried out in \cite{KM} for $\dot{H}^1$ solutions
and later extended to $H^1$-solutions,
in a form adapted to $\F H^1$-solutions.
This adaption is the main technical issue on this paper.
It then turns out that there is a difference in
a so-called \emph{profile decomposition lemma}. 
More precisely, a bounded sequences in $\F H^1$
are decomposed into a simpler form than those
in the known decompositions of $H^1$- or $\dot H^1$-bounded sequences.

\subsection*{Main results}
We now state our results precisely.
\begin{definition}\label{def:scatter}
Let $X$ be a Banach space.
Let $u$ be a solution to \eqref{eq:NLS} such that $e^{-it\Delta}u(t)\in X$
for any $t\in \R$.
We say $u$ scatters in $X$ for positive time (resp. negative time)
if 
$\lim_{t\to\I} e^{-it\Delta}u(t)$ exists (resp. $\lim_{t\to-\I} e^{-it\Delta}u(t)$ exists) in $X$.
\end{definition}

\begin{definition}
We define  $S_+,S_- \subset \F H^1$ as follows:
\[
	S_+ =\left\{ u_0 \in \F H^1 \ \Biggm|\
	\begin{aligned}
	&\text{solution }u(t)\text{ of }\eqref{eq:NLS}
	\text{ with }u|_{t=0}=u_0\\
	&\text{scatters in }\F H^1\text{ for positive time}
	\end{aligned}
	\right\},
\]
\[
	S_- =\left\{ u_0 \in \F H^1 \ \Biggm|\
	\begin{aligned}
	&\text{solution }u(t)\text{ of }\eqref{eq:NLS}
	\text{ with }u|_{t=0}=u_0\\
	&\text{scatters in }\F H^1\text{ for negative time}
	\end{aligned}
	\right\}.
\]
Further, we define a \emph{ scattering set} $S$ as $S=S_+\cap S_-$.
\end{definition}
The first main result of this article is
\begin{theorem}\label{thm:main1}
Suppose $\l=-1$ and $p\in (\pst, 1+ \frac4N)$, where $\pst$ is given in \eqref{asmp:p}.
Then, there exists $\ucz\in \F H^1$ satisfying the following two properties:
\begin{enumerate}
\item $\ucz \not\in S_+$ \quad (non-scattering);
\item $\displaystyle \ell(\ucz) = \inf_{u_0 \in \F H^1\setminus S} \ell(u_0)$ \quad (minimality).
\end{enumerate}
\end{theorem}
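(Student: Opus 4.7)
The plan is to carry out the concentration--compactness / rigidity scheme of Kenig--Merle, adapted so that the scale-invariant quantity $\ell$ plays the role of a size functional and so that all analysis is done at the level of $\F H^1$-data. Since the global well-posedness recalled in the introduction removes finite-time blow-up as a concern, scattering for positive time is equivalent to finiteness of some scale-invariant space-time norm $\|\cdot\|_Z$ of $L^\rho_t L^\gamma_x$-type that the paper will single out. By the time-reversal symmetry $u(t,x)\mapsto\overline{u(-t,x)}$ one has
\[
\inf_{u_0\notin S_+}\ell(u_0)=\inf_{u_0\notin S_-}\ell(u_0)=\inf_{u_0\notin S}\ell(u_0)=:\ell_c,
\]
so it suffices to realize $\ell_c$ by some $\ucz\notin S_+$.

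First I would verify $\ell_c\in(0,\I)$. The upper bound is immediate by exhibiting, e.g., a standing-wave solution. For the lower bound I would run a small-data scattering argument: because $p>\pst$, a scale-invariant Strichartz fixed-point argument (equivalently, an $H^1$ argument after the pseudoconformal change of variables) shows $\|u\|_Z<\I$, hence $u_0\in S$, whenever $\ell(u_0)$ is sufficiently small.

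Next, I would take a minimizing sequence $\{u_{0,n}\}\subset\F H^1\setminus S_+$ with $\ell(u_{0,n})\to\ell_c$ and apply the $\F H^1$-profile decomposition advertised in the introduction,
\[
u_{0,n}=\sum_{j=1}^J T_n^{(j)}\phi^{(j)}+r_n^{(J)},
\]
where the $T_n^{(j)}$ are the (reduced) family of symmetry operators retained by the $\F H^1$-decomposition, the profiles $\phi^{(j)}\in\F H^1$ have pairwise asymptotically orthogonal parameters, and the free evolution of $r_n^{(J)}$ has vanishing $Z$-norm as $J,n\to\I$. Decoupling of $\ell$ along profiles, combined with the definition of $\ell_c$, forces all but one profile to be trivial; call the surviving one $\phi^{(1)}$ and note $\ell(\phi^{(1)})\le\ell_c$.

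The remaining and most delicate step is the nonlinear profile / long-time perturbation argument. I would prove (a) every profile with $\ell(\phi^{(j)})<\ell_c$ generates a nonlinear solution $U^{(j)}\in S$, in particular with finite $Z$-norm; and (b) a stability lemma for \eqref{eq:NLS} in the $\F H^1$ framework asserting that the actual solution $u_n$ is approximated in $Z$-norm by $\sum_j T_n^{(j)}U^{(j)}+e^{it\Delta}r_n^{(J)}$ for $n,J$ large. Combining (a)--(b), if $\ell(\phi^{(1)})<\ell_c$ then $u_n\in S_+$ for large $n$, contradicting the choice of $u_{0,n}$. Hence $\ell(\phi^{(1)})=\ell_c$; undoing the remaining symmetry $T_n^{(1)}$ (and, if necessary, a time translation) identifies $\ucz:=\phi^{(1)}$ as a non-scattering datum attaining the infimum. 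The principal obstacles are, first, constructing the $\F H^1$-profile decomposition itself (highlighted in the introduction as the main technical adaptation of the paper) and, second, building the scale-invariant Strichartz and stability theory needed in step (b), for which the usual $H^1$- or $\dot H^1$-based machinery does not directly apply.
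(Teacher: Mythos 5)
Your proposal follows essentially the same route as the paper: reduce to $S_+$ by time reversal, establish $\ell_c\in(0,\I)$ via small-data scattering and a standing wave, then run a minimizing sequence through the $\F H^1$ profile decomposition and use nonlinear profiles plus the long-time perturbation theory to force a single profile attaining $\ell_c$. The only point to make explicit is that the same perturbation argument must be invoked once more to show the surviving profile itself lies outside $S_+$ (if it scattered, so would $u_{0,n}$ for large $n$), and that the reduction to one profile really comes from combining the mass/weight decoupling with the nonlinear contradiction, not from decoupling alone.
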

The solution $\uc$ with $\uc|_{t=0}=\ucz$ is the minimal non-scattering solution.
\begin{remark}
The property (2) of Theorem \ref{thm:main1} 
implies that
if $u_0\in \F H^1$ satisfies $\ell (u_0) < \ell(\ucz)$ then $u_0 \in S$.
This criteria is sharp in view of the property (1).
\end{remark}
\begin{remark}
If $u(t)$ is a solution then $\overline{u}(-t)$ is also a solution.
This implies $\overline{u_0}\in S_-$ if and only if $u_0 \in S_+$.
Then, we see that 
 $v_{0,c}:=\overline{\ucz}$ 
satisfies
$v_{0,c} \not\in S_-$
and the property (2) of Theorem \ref{thm:main1}
since $\ell(u(0))=\ell(\overline{u}(0))$.
\end{remark}
\begin{remark}
We do not know whether $\ucz \in S_-$ or not.
\end{remark}

Our proof of Theorem \ref{thm:main1}
is based on a concentration-compactness method introduced in \cite{KM}.
In the mass-critical and -supercritical cases, 
one makes use of a \emph{rigidity theorem} together with concentration-compactness
argument to yield a contradiction from an assumption that
a threshold solution is smaller than the ground state in a suitable sense.
However, in our case, no contradiction can be derived 
from the assumption $\ell(\ucz)<\ell(Q)$
because this assertion is actually true.
\begin{theorem}\label{thm:main2}
For any
standing-wave solution $e^{i\omega^2 t} \varphi_\omega(x)$,
it holds that $\ell(\varphi_\omega)>\inf_{u_0 \in \F H^1\setminus S} \ell(u_0)$.
In particular, 
the minimal non-scattering solution $\uc$ given in Theorem \ref{thm:main1}
is not a standing-wave solution.
\end{theorem}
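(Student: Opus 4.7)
The plan is to exhibit, for each standing wave $e^{i\omega^2 t}\varphi_\omega$, an explicit non-scattering initial datum in $\F H^1$ whose $\ell$-value lies strictly below $\ell(\varphi_\omega)$; property (2) of Theorem~\ref{thm:main1} then gives the first assertion, and the ``in particular'' clause follows at once. First I would reduce by the symmetries of the equation: since $\varphi_\omega(x)=\omega^{2/(p-1)}\psi(\omega x)$ for some bound-state profile $\psi$ of \eqref{eq:elliptic}, and since the two exponents in \eqref{def:ell} sum to $1$, a short direct computation shows that $\ell$ is invariant under the NLS scaling \eqref{eq:scaling} and scalar-homogeneous of degree $1$. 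Thus $\ell(\varphi_\omega)=\ell(\psi)$, and it suffices to beat $\ell(\psi)$; I focus on the ground state $\psi=Q$, with excited-state profiles handled by the supplementary comparison $\ell(Q)\le\ell(\psi)$.

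My proposed competitor is $u_0:=(1-\delta)Q\in\F H^1\cap H^1$ for small $\delta\in(0,1)$, which by scalar homogeneity satisfies $\ell(u_0)=(1-\delta)\ell(Q)<\ell(Q)$. To verify $u_0\in\F H^1\setminus S$, I would invoke the Cazenave--Lions orbital stability of the radial ground state in the mass-subcritical focusing regime: for $\delta$ small, the radial solution $u(t)$ with $u(0)=u_0$ stays in a small $H^1$-neighborhood of $\{e^{i\theta}Q\}_{\theta\in\R}$, so that $\Lebn{u(t)}{p+1}\ge c_0>0$ uniformly in $t$. Suppose for contradiction $u_0\in S_+$, with scattering state $u^+$. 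The conservation of mass and energy bounds $u(t)$ in $H^1$, whence $u^+\in H^1\cap\F H^1$. A weighted Gagliardo--Nirenberg embedding $\F H^1\hookrightarrow L^{(p+1)/p}(\R^N)$ (valid on the range of $p$ in \eqref{asmp:p}) puts $u^+$ in $L^{(p+1)/p}$, and the dispersive estimate then gives $\Lebn{e^{it\Delta}u^+}{p+1}\to 0$ as $t\to\infty$. The remainder $r(t):=u(t)-e^{it\Delta}u^+$ tends to $0$ in $L^2$ (since it does in $\F H^1$) while remaining bounded in $H^1$, so a Gagliardo--Nirenberg interpolation yields $\Lebn{r(t)}{p+1}\to 0$. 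Combining, $\Lebn{u(t)}{p+1}\to 0$, contradicting the orbital-stability lower bound; hence $u_0\in\F H^1\setminus S$.

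These steps then yield
\[
\ell(\ucz)\;=\;\inf_{u_0\in\F H^1\setminus S}\ell(u_0)\;\le\;\ell\bigl((1-\delta)Q\bigr)\;=\;(1-\delta)\ell(\varphi_\omega)\;<\;\ell(\varphi_\omega),
\]
which is the first conclusion of Theorem~\ref{thm:main2} when $\psi=Q$, and the ``in particular'' clause is then automatic, since $\uc$ being a standing wave would give $\ell(\ucz)=\ell(\varphi_\omega)$, contradicting the strict inequality. The hardest part, as I see it, is the non-scattering step: I must pin down the weighted embedding $\F H^1\hookrightarrow L^{(p+1)/p}$ throughout the range \eqref{asmp:p} and verify that the $H^1$-flow structure is truly compatible with the $\F H^1$-scattering definition used in the paper, so that the $L^{p+1}$-decay of the remainder really holds. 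A secondary but genuine issue is the comparison $\ell(Q)\le\ell(\psi)$ for excited bound states, for which I do not see an immediate Pohozaev-type identity controlling $\Lebn{x\psi}{2}$.
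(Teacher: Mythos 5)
Your proposal establishes the theorem only for the ground state, and the key step it relies on does not extend to the general case covered by the statement. The theorem concerns every standing wave built from a nontrivial $H^1$ solution $\psi$ of \eqref{eq:elliptic}, which for $N\ge 2$ includes excited states; for those, Cazenave--Lions orbital stability is not available (excited states are generically unstable), so your non-scattering argument for $(1-\delta)\psi$ collapses, and the fallback comparison $\ell(Q)\le\ell(\psi)$ that you would need instead is, as you yourself suspect, not obtainable from a Pohozaev-type identity (Pohozaev controls $\Lebn{\nabla\psi}{2}$ and $\Lebn{\psi}{p+1}$, not $\Lebn{x\psi}{2}$). This is a genuine gap rather than a technicality: the reduction to $Q$ was made precisely so that orbital stability would apply, and that reduction cannot be completed.

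The paper's own proof (Lemma \ref{lem:NegativeEnergy1} and Theorem \ref{thm:main2'}) sidesteps all of this with a mechanism that is uniform over all bound states and much lighter even in the ground-state case. Pohozaev's identity gives $E[\varphi]=\frac{N(p-1)-4}{2N(p-1)}\Lebn{\nabla\varphi}{2}^2<0$ for every nontrivial $H^1$ solution of \eqref{eq:elliptic} in the mass-subcritical range; conservation of energy then forces $\Lebn{u(t)}{p+1}^{p+1}\ge -(p+1)E[u_0]>0$ for all $t$ whenever $E[u_0]<0$, so $\norm{u}_{L^\rho((0,\I),L^{p+1})}=\I$ (as $\rho<\I$) and Proposition \ref{prop:ScatterCond} yields $u_0\notin S_+$ (and likewise $u_0\notin S_-$). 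Since $E[c u_0]<0$ persists for $c$ slightly below $1$ while $\ell(cu_0)=c\,\ell(u_0)<\ell(u_0)$, the strict inequality follows. Note that this observation also supersedes the most laborious part of your ground-state argument: once you have $\Lebn{u(t)}{p+1}\ge c_0>0$ from orbital stability, the criterion of Proposition \ref{prop:ScatterCond} is already violated, so the weighted embedding $\F H^1\hookrightarrow L^{(p+1)/p}$, the dispersive decay of $e^{it\Delta}u^+$, and the Gagliardo--Nirenberg treatment of the remainder are all unnecessary. If you wish to salvage your structure, replace the orbital-stability step by the negative-energy/conservation argument applied directly to $c_0\psi$ for each bound state $\psi$; that is exactly the paper's route.
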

\begin{remark}
More generally, 
if $\psi \in H^1 \cap \F H^1$ has negative energy then 
$\ell(\psi)>\inf_{u_0 \in \F H^1\setminus S} \ell(u_0)$, in fact (see Theorem \ref{thm:main2'}).
Let us note that,
in the case $1+\frac4N< p < 1+ \frac4{N-2}$ ($1+\frac4N<p<\I$ if $N=1,2$),
the same assumption on $u_0$ yields finite time blowup \cite{Gl}.
\end{remark}

The rest of the article is organized as follows.
In section 2, we briefly recall some basic facts and
well-posedness result in $\F H^1$,
and then give
a necessary and sufficient condition for scattering.
Section 3 is devoted to the proof of 
Theorem \ref{thm:main2}.
Then, we turn to the proof of Theorem \ref{thm:main1}.
In Sections 4 and 5, several tools for
a concentration-compactness method
is established in the framework of weighted $L^2$ space.
We first prove a so-called 
\emph{long time perturbation theory} for $\F H^1$-solutions in Section 4.
Several consequent results such as small data scattering or
oscillating data scattering are also shown there.
The latter says that a solution scatters for both time directions
if its initial data is sufficiently oscillating,
which is specific to the mass-subcritical case and the key for the 
proof of 
the profile decomposition lemma for bounded sequences in $\F H^1$,
considered in Section 5.
Finally, we complete the proof of Theorem \ref{thm:main1} in Section 6.

\section{Necessary and sufficient condition for scattering in $\F H^1$}
Let us begin with introduction of notations.
We write $2^*=\frac{2N}{N-2}$ for $N\ge3$.
Let $p'=\frac{p}{p-1}$ and $\delta(r):=N(\frac12-\frac1r)$.
We say a pair $(q,r)$ is admissible if
$r\in [2,2^*]$ ($r\in[2,\I]$ if $N=1$, $r\in[2,\I)$ if $N=2$)
and $q\delta(r)=2$.
For any admissible pairs $(q,r)$, $(q_1,r_1)$, and $(q_2,r_2)$,
Strichartz' estimates hold.
There exists a constant $C$ such that
\[
	\norm{e^{it\Delta}f}_{L^q(I,L^r)} \le C \Lebn{f}2 
\]
for all interval $I\subset \R$ and $f\in L^2$.
There also exists a constant $C$ such that
\[
	\norm{\int_{t_0}^t e^{i(t-s)\Delta}g(s) ds}_{L^{q_1}(I,L^{r_1})}
	\le C\norm{g}_{L^{q'_2}(I,L^{r'_2})}
\]
for any $t_0 \in I \subset \R$ and $g\in L^{q'_2}(I,L^{r'_2})$.

Throughout this paper, we use the following notation.
\begin{equation}\label{def:rho}
\begin{aligned}
	\rho &{}= \frac{2(p^2-1)}{4-(N-2)(p-1)}, &
	\gamma= p+1, \\
	\widetilde{\rho} &{}= \frac{2(p^2-1)}{Np^2-(N+2)p-2}, &
	\widetilde{\gamma}=p+1.
\end{aligned}
\end{equation}
There are well defined for $p\in (\pst,1+\frac4N)$.
Note that the 
 $L^{\rho}_t(\R,L^\gamma_x(\R^N))$-norm
is invariant under the scaling \eqref{eq:scaling}.
Moreover, the relations
\begin{equation}\label{eq:rhotrho}
	\rho = p \widetilde{\rho}',\quad 
	\gamma = p \widetilde{\gamma}'
\end{equation}
hold.
We also remark that $(\rho,\gamma)$ is not an admissible pair
since 
\begin{equation}\label{eq:nonadm}
	2>\rho \delta(\gamma)
\end{equation}
follows by definition.
We have non-admissible Strichartz' estimate holds for $(\rho,\gamma,
\widetilde{\rho},\widetilde{\gamma})$ (\cite{Ka});
there exists a constant $C>0$ such that,
for any interval $I\subset \R $ and $t_0\in I$, we have
\begin{equation}\label{eq:Str}
	\norm{\int_{t_0}^t e^{i(t-s)\Delta} g(s) ds }_{L^\rho(I,L^\gamma)}
	\le C
	\norm{g}_{L^{\widetilde{\rho}'}(I,L^{\widetilde{\gamma}'})}
\end{equation}
for all $g \in L^{\widetilde{q}'}(I,L^{\widetilde{r}'})$
For more precise conditions of this estimate, see \cite{Fo,Ko,Vi}.
One verifies that if $p>\pst$ then
\begin{equation}\label{eq:acceptable}
	\rho \delta(\gamma)>1,\quad 
	\widetilde{\rho}\delta(\widetilde{\gamma})>1
\end{equation}
holds.

Next, let us collect some facts on $J(t):=x+2it\nabla$.
We have 
$$[i\d_t + \Delta, J(t)]=0$$
and $J(t)$ is written as
\begin{equation}\label{eq:Jt}
	J(t)= e^{i\frac{|x|^2}{2t}} 2it\nabla e^{-i\frac{|x|^2}{2t}} = 
	e^{it \Delta} x e^{-it\Delta}.
\end{equation}
\begin{lemma}\label{lem:decay}
If $r \in [2,2^*]$ ($r \in [2,\I]$ if $N=1$, $r \in[2,\I)$ if $N=2$),
then
\[
	\Lebn{f}\gamma \le C|t|^{-\delta(r)} \Lebn{f}2^{1-\delta(r)} \Lebn{J(t)f}2^{\delta(r)}
\]
for any $t\neq0$. Further, 
there exists a constant $C>0$ such that
\[
	\norm{e^{it\Delta}f}_{L^\rho_t(\R,L^\gamma_x(\R^N))}\le C \ell(f)
\]
holds for $f\in \F H^1$.
\end{lemma}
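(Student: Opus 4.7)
My plan is to establish the two assertions in turn. For the pointwise Gagliardo--Nirenberg inequality I would exploit the factorization in \eqref{eq:Jt}: setting $g := e^{-i|x|^2/(2t)}f$, the unimodular phase preserves every $L^p$ norm, so $\Lebn{f}{r}=\Lebn{g}{r}$ and $\Lebn{f}{2}=\Lebn{g}{2}$, while the factorization turns $J(t)$ into $2it\nabla$, giving $\Lebn{J(t)f}{2} = 2|t|\,\Lebn{\nabla g}{2}$. The claim then reduces to the classical Gagliardo--Nirenberg inequality $\Lebn{g}{r}\lesssim \Lebn{g}{2}^{1-\delta(r)}\Lebn{\nabla g}{2}^{\delta(r)}$ applied to $g$, which is valid in the stated range of $r$ with the usual dimensional caveats for $N=1,2$.

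For the space-time bound I would specialize the pointwise inequality to $u(t)=e^{it\Delta}f$ at $r=\gamma$. Unitarity gives $\Lebn{u(t)}{2}=\Lebn{f}{2}$, and the intertwining $J(t)e^{it\Delta}=e^{it\Delta}x$ from \eqref{eq:Jt} gives $\Lebn{J(t)u(t)}{2}=\Lebn{xf}{2}$, yielding
\[
  \Lebn{e^{it\Delta}f}{\gamma} \le C|t|^{-\delta(\gamma)} \Lebn{f}{2}^{1-\delta(\gamma)} \Lebn{xf}{2}^{\delta(\gamma)}.
\]
The main obstacle now surfaces: by \eqref{eq:acceptable}, $\rho\delta(\gamma)>1$, so the right-hand side is integrable at infinity but singular at $t=0$, and cannot control $\norm{e^{it\Delta}f}_{L^\rho_t L^\gamma_x}$ on its own. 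I would compensate by invoking an admissible Strichartz estimate at the pair $(q_0,\gamma)$ with $q_0\delta(\gamma)=2$; since \eqref{eq:nonadm} forces $\rho<q_0$, H\"older in time gives $\norm{e^{it\Delta}f}_{L^\rho_t(|t|\le T)L^\gamma_x}\lesssim T^{1/\rho-1/q_0}\Lebn{f}{2}$ on every finite window.

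Splitting $\R=\{|t|\le T\}\cup\{|t|>T\}$, estimating each piece by one of the two bounds, and choosing $T=\Lebn{xf}{2}^2/\Lebn{f}{2}^2$ (the unique scale-invariant balance point) combines the two contributions into a single estimate whose exponents collapse to those in the definition \eqref{def:ell} of $\ell$; the arithmetic uses $\delta(\gamma)=N(\tfrac12-\tfrac1{p+1})$ together with the identity $2/\rho = 2/(p-1)-N/(p+1)$ hidden in \eqref{def:rho}. The whole difficulty lies precisely in the absence of an admissible Strichartz pair at $(\rho,\gamma)$; the two-regime split trades that defect for the decay produced by $J(t)$, with the balance point dictated by the common scaling of both sides of the inequality.
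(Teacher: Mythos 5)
Your proposal is correct and follows essentially the same route as the paper: the pointwise bound via the conjugation $J(t)=e^{i|x|^2/2t}\,2it\nabla\,e^{-i|x|^2/2t}$ plus Gagliardo--Nirenberg, and the space-time bound by splitting at the scale-invariant time $T=\Lebn{xf}2^2/\Lebn{f}2^2$, using the $|t|^{-\delta(\gamma)}$ decay with $\rho\delta(\gamma)>1$ for $|t|>T$ and H\"older in time together with the admissible Strichartz pair $(2/\delta(\gamma),\gamma)$ for $|t|\le T$. The exponent bookkeeping you sketch matches the definition of $\ell$ exactly as in the paper's proof.
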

\begin{proof}
By the Sobolev embedding and the first identity of \eqref{eq:Jt}, we immediately obtain
\begin{align*}
	\Lebn{f}r &{}= \Lebn{e^{-i\frac{|x|^2}{2t}}f}r \le
	C \Lebn{e^{-i\frac{|x|^2}{2t}}f}2^{1-\delta(r)} \Lebn{\nabla e^{-i\frac{|x|^2}{2t}}f}2^{\delta(r)}\\
	&{}\le C |t|^{-\delta(r)} \Lebn{f}2^{1-\delta(r)} \Lebn{J(t)f}2^{\delta(r)}.
\end{align*}
Let us proceed to the proof of second inequality.
We may assume $f\neq0$, otherwise the result is obvious.
Let $T_f=\Lebn{xf}2^2/\Lebn{f}2^2>0$.
Since $e^{it\Delta}$ is unitary on $L^2$,
we see from the identity just shown and the second identity of \eqref{eq:Jt} that
\begin{align*}
	\Lebn{e^{it\Delta}f}\gamma
	&{}\le C |t|^{-\delta(\gamma)} \Lebn{e^{it\Delta}f}2^{1-\delta(\gamma)}
	\Lebn{J(t)e^{it\Delta}f}2^{\delta(\gamma)}\\
	&{}= C |t|^{-\delta(\gamma)} \Lebn{f}2^{1-\delta(\gamma)} \Lebn{xf}2^{\delta(\gamma)}.
\end{align*}
Since $\rho \delta(\gamma)>1$ as noticed in \eqref{eq:acceptable},
\begin{align*}
	\norm{e^{it\Delta}f}_{L^\rho(\R\setminus[-T_f,T_f],L^\gamma)}
	&{}\le C T_f^{\frac1\rho-\delta(\gamma)} \Lebn{f}2^{1-\delta(\gamma)} \Lebn{xf}2^{\delta(\gamma)}\\
	&{}=C \Lebn{f}2^{1-(\frac{2}{\rho}-\delta(\gamma))} \Lebn{xf}2^{\frac{2}{\rho}-\delta(\gamma)}
	=C\ell (f).
\end{align*}
On the other hand, by H\"odler's inequality and Strichartz' estimate, 
\begin{align*}
	\norm{e^{it\Delta}f}_{L^\rho([-T_f,T_f],L^\gamma)}
	&{}\le C T_f^{\frac1\rho-\frac{\delta(\gamma)}2}
	\norm{e^{it\Delta}f}_{L^{\frac2{\delta(\gamma)}}([-T_f,T_f],L^\gamma)}\\
	&{}\le C T_f^{\frac1\rho-\frac{\delta(\gamma)}2} \Lebn{f}2 = C\ell(f),
\end{align*}
which completes the proof.
\end{proof}
We summarize basic well-posedness result in $\F H^1$ and sufficient condition for scattering
in a form suitable for our later use.
This is a consequence of \cite{GOV,NO}.
However, we give a brief proof for readers' convenience.
\begin{proposition}[Global well-posedness, sufficient condition for scattering]\label{prop:GWP}
Suppose $1< p < 1+ 4/N$.
Then, equation \eqref{eq:NLS} is globally well-posed in $L^2$.
Moreover,
the solution scatters in $L^2$ for positive time if
\begin{equation}\label{cond:Lrg}
	\norm{u}_{L^\rho((0,\I),L^{\gamma})} <\I.
\end{equation}
Furthermore, the following hold.
\begin{itemize}
\item If $u_0 \in \F H^1$ (resp. $u_0 \in H^1$)
then the solution satisfies $e^{-it\Delta}u(t) \in C(\R,\F H^1)$
(resp. $u(t) \in C(\R, H^1)$).
\item If $u_0 \in \F H^1$ (resp. $u_0 \in H^1$)
and if \eqref{cond:Lrg} is satisfied 
then $e^{-it\Delta} u(t)$ converges in $\F H^1$
(resp. in $H^1$) as $t\to\I$.
\end{itemize}
\end{proposition}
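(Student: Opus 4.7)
The plan is to run the standard contraction mapping argument on the Duhamel formulation in the scale-invariant norm $L^\rho_t L^\gamma_x$, then globalize by mass conservation, then upgrade to scattering and to $\F H^1$/$H^1$ regularity. First I would fix $u_0\in L^2$ and seek $u$ as a fixed point of $\Phi(u)=e^{it\Delta}u_0-i\la\int_0^t e^{i(t-s)\Delta}|u|^{p-1}u\,ds$ in a ball of $L^\rho([0,T],L^\gamma)$. The nonlinearity is handled via \eqref{eq:rhotrho} and \eqref{eq:Str}: one has $\norm{|u|^{p-1}u}_{L^{\widetilde\rho'}L^{\widetilde\gamma'}}=\norm{u}_{L^\rho L^\gamma}^p$, and \eqref{eq:Str} brings this back into $L^\rho L^\gamma$. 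For the linear part, \eqref{eq:nonadm} yields an admissible pair $(q_0,\gamma)$ with $q_0>\rho$, so H\"older in time and the usual Strichartz estimate give $\norm{e^{it\Delta}u_0}_{L^\rho([0,T],L^\gamma)}\le CT^{1/\rho-1/q_0}\Lebn{u_0}2$. The local existence time depends only on $\Lebn{u_0}2$, so mass conservation iterates to all of $\R$.

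For the $L^2$-scattering statement, under \eqref{cond:Lrg} I would partition $(0,\I)$ into finitely many intervals $I_j$ on which $\norm{u}_{L^\rho(I_j,L^\gamma)}<\eta$ for a small constant $\eta$. From $e^{-it\Delta}u(t)-e^{-it_0\Delta}u(t_0)=-i\la\int_{t_0}^t e^{-is\Delta}|u|^{p-1}u\,ds$, it suffices to establish an adjoint Strichartz bound of the form $\norm{\int_{t_0}^t e^{-is\Delta}g\,ds}_{L^2}\le C\norm{g}_{L^{\widetilde\rho'}L^{\widetilde\gamma'}}$ in the non-admissible range of \eqref{eq:acceptable} (directly from the non-admissible Strichartz framework referenced in the paper, or via $TT^\ast$ from \eqref{eq:Str} combined with mass conservation); applied with $g=|u|^{p-1}u$ this yields $C\norm{u}_{L^\rho((t_0,t),L^\gamma)}^p$, so $e^{-it\Delta}u(t)$ is Cauchy. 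For persistence of $\F H^1$ regularity, I would use $J(t)=e^{it\Delta}xe^{-it\Delta}$ from \eqref{eq:Jt} together with $[i\d_t+\Delta,J(t)]=0$: setting $v(t):=J(t)u(t)$, one has $v(0)=xu_0\in L^2$ and $(i\d_t+\Delta)v=\la J(t)(|u|^{p-1}u)$, while the gauge identity $J(t)=e^{i|x|^2/(2t)}\cdot 2it\nabla\cdot e^{-i|x|^2/(2t)}$ in \eqref{eq:Jt} gives the pointwise chain-rule bound $|J(t)(|u|^{p-1}u)|\lesssim|u|^{p-1}|J(t)u|$. Hence $v$ satisfies Strichartz estimates of the same structure as $u$, which close on the same short intervals and yield $v\in L^\infty(\R,L^2)$. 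Running the Cauchy argument of the previous step with $v$ in place of $u$ produces convergence of $xe^{-it\Delta}u(t)=e^{-it\Delta}v(t)$ in $L^2$, i.e.\ scattering in $\F H^1$. The $H^1$ statements are identical with $\nabla$ replacing $J(t)$, and simpler since $\nabla$ already commutes with $e^{it\Delta}$.

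The main obstacle is the non-admissibility of the pairs $(\rho,\gamma)$ and $(\widetilde\rho,\widetilde\gamma)$ recorded in \eqref{eq:nonadm}--\eqref{eq:acceptable}: the $L^2$-endpoint adjoint Strichartz estimate that drives the scattering step is not a classical one, and justifying it requires the non-admissible Strichartz machinery already cited in the paper. A secondary bookkeeping matter is the chain-rule bound for $J(t)$ applied to $|u|^{p-1}u$, which however reduces cleanly to the ordinary chain rule in $\nabla$ once one passes to the gauge-transformed function $e^{-i|x|^2/(2t)}u$.
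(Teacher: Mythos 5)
Your overall architecture (Duhamel, $L^\rho L^\gamma$ as the scattering norm, $J(t)$ with the gauge identity for the weighted regularity) matches the paper's, but the step that actually produces scattering contains a genuine error. The estimate you rely on,
\[
	\norm{\int_{t_0}^t e^{-is\Delta} g(s)\,ds}_{L^2}
	\le C \norm{g}_{L^{\widetilde\rho'}((t_0,t),L^{\widetilde\gamma'})},
\]
is false on unbounded time intervals, and no amount of non-admissible Strichartz machinery will rescue it. By duality it is equivalent to the homogeneous bound $\norm{e^{it\Delta}f}_{L^{\widetilde\rho}L^{\widetilde\gamma}}\le C\Lebn{f}2$, which by scaling requires $\widetilde\rho\,\delta(\widetilde\gamma)=2$; a short computation shows $\widetilde\rho\,\delta(\widetilde\gamma)-2$ has the sign of $-(p+1)(N(p-1)-4)$, so $\widetilde\rho\,\delta(\widetilde\gamma)>2$ throughout the mass-subcritical range and equality holds only at $p=1+\frac4N$. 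Equivalently: applied to $g=|u|^{p-1}u$ your estimate would read $\norm{e^{-it\Delta}u(t)-e^{-it_0\Delta}u(t_0)}_{L^2}\le C\norm{u}_{L^\rho L^\gamma}^p$, which equates a non-scale-invariant quantity with a scale-invariant one and cannot hold. The inhomogeneous estimates of Foschi--Vilela--Koh never reach the output pair $(\I,2)$ with a non-admissible input pair, precisely because that endpoint degenerates to the dual homogeneous estimate.

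The repair is the route the paper takes: introduce an auxiliary \emph{admissible} pair $(q,r)$ with $1-\frac2r=\frac{p-1}{\gamma}$, so that H\"older gives $\norm{|u|^{p-1}u}_{L^{q'}L^{r'}}\le \norm{u}_{L^\rho L^\gamma}^{p-1}\norm{u}_{L^qL^r}$. Condition \eqref{cond:Lrg} makes the factor $\norm{u}_{L^\rho([\tilde T,\I),L^\gamma)}^{p-1}$ small on a tail interval, which lets you bootstrap $\norm{u}_{L^q([\tilde T,\I),L^r)}\le 2C\Lebn{u_0}2$ from the admissible Strichartz estimate; the Cauchy property of $e^{-it\Delta}u(t)$ then follows from the \emph{dual admissible} estimate applied to $\norm{u}_{L^\rho}^{p-1}\norm{u}_{L^qL^r}$. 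Note the same issue infects your $\F H^1$ step: after the pointwise bound $|J(t)(|u|^{p-1}u)|\lesssim|u|^{p-1}|J(t)u|$ you must again measure the product in $L^{q'}L^{r'}$ as $\norm{u}_{L^\rho L^\gamma}^{p-1}\norm{J(t)u}_{L^qL^r}$ (the paper's \eqref{eq:LWP2}), not in $L^{\widetilde\rho'}L^{\widetilde\gamma'}$, in order to return to $L^2$ and conclude convergence of $xe^{-it\Delta}u(t)$. With that substitution the rest of your outline (localizing in time, persistence via the continuation criterion in $\norm{u}_{L^\rho L^\gamma}$, and the $H^1$ analogue with $\nabla$) goes through as in the paper.
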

\begin{remark}
The condition \eqref{cond:Lrg} is  never satisfied with a non-trivial solution,
provided  $p\le 1+ \frac2N$.
When $p<\pst$, $\F H^1$-solutions of the free Scrh\"odinger equation
do not satisfy \eqref{cond:Lrg} in general.
\end{remark}
\begin{proof}
Global well-posedness in $L^2$ is well known (see \cite{Tsu1}).
Namely, for all $u_0\in L^2$, there exists a unique global solution $u(t)$ belonging to
$C(\R,L^2) \cap L^a_{\mathrm{loc}}(\R, L^b(\R^N))$
for any admissible pair $(a,b)$.
Note that $u \in L^\rho_{\mathrm{loc}}(\R, L^\gamma(\R^N))$
follows from $\rho \delta(\gamma)<2$.

Let us prove the scattering.
Assume \eqref{cond:Lrg}.
Take an admissible pair $(q,r)$ so that
\[
	1- \frac2r = \frac{p-1}{\gamma}.
\]
This is possible 
since $\delta(r)=\frac{N(p-1)}{2(p+1)}\in (0,1)$.
A use of Strichartz's estimate gives us
\begin{equation*}
	\norm{u}_{L^{q}(I,L^r)} \le 
	C \Lebn{u(t_0)}2 + C \norm{|u|^{p-1}u}_{L^{q'} (I,L^{r'})}.
\end{equation*}
for any $t_0\in I \subset \R$.
By conservation of mass and H\"older inequality,  
\begin{equation}\label{eq:LWP1}
	\norm{u}_{L^{q}(I,L^r)} \le 
	C \norm{u_0}_{L^2} + C \norm{u}_{L^\rho (I,L^{\gamma})}^{p-1}
	\norm{u}_{L^{q}(I,L^r)}.
\end{equation}
Since $\norm{u}_{L^\rho(\R_+,L^{\gamma})} <\I$ by assumption,
for sufficiently large $\tilde{T}$, we obtain
\[
	\norm{u}_{L^q(([\tilde{T},T] ,L^r )} \le 
	C \norm{u_0}_{L^2} + \frac12
	\norm{u}_{L^q(([\tilde{T},T ],L^r )}
\]
for any $T>\widetilde{T}$, which shows
$\norm{u}_{L^q(([\tilde{T},T] ,L^r )}\le 2C \Lebn{u_0}2 $.
Since $T$ is arbitrary, $\norm{u}_{L^q(([\tilde{T},\I) ,L^r )} <\I$.
Applying Strichartz' estimate again, we see that
$u \in L^a([0,\I),L^b)$ for all admissible pair $(a,b)$.
One then deduces that 
\[
	\psi_+:= u_0 -i\l \int_0^\I e^{-is\Delta}(|u|^{p-1}u)(s) ds 
\]
is well-defined in $L^2$.
Another use of Strichartz's estimate gives us
\[
	\norm{u-e^{it\Delta} \psi_+}_{L^\I(([T,\I ),L^2)} \le C\norm{u}_{L^\rho([T,\I),L^{\gamma})}^{p-1}
 \norm{u}_{L^q(([T,\I ),L^r )} \to 0 
\]
as $T\to\I$, which proves the scattering in $L^2$.

Now we assume that $u_0\in \F H^1$ and prove the first assertion.
Take $t_0 \in \R$ and interval $I\ni t_0$.
Operating $J(t)$ to the integral form of \eqref{eq:NLS}
and applying \eqref{eq:Jt}, we obtain
\[
	J(t)u(t) = e^{i(t-t_0)\Delta} J(t_0)u(t_0)
	-i \int_{t_0}^t e^{i(t-s)\Delta} J(s)(|u|^{p-1}u(s)) ds.
\]
Hence, it holds from Strichartz' estimate that
\begin{equation}\label{eq:LWP2}
	\norm{J(t)u}_{L^{q}(I,L^r)} \le 
	C \Lebn{J(t_0)u(t_0)}2 + C \norm{u}_{L^\rho (I,L^{\gamma})}^{p-1}
	\norm{J(t) u}_{L^{q}(I,L^r)}.
\end{equation}
Since $u \in  L^{2/{\delta(\gamma)}}_{\mathrm{loc}} (\R,L^\gamma) \subset 
  L^\rho_{\mathrm{loc}}(\R,L^\gamma) $,
it follows from \eqref{eq:LWP2}
that the solution satisfies $J(t)u(t) \in L^q(I,L^r)$ at least
on a small interval $I$ around $t=t_0$, provided $J(t_0)u(t_0) \in L^2$.
Further,  for the same interval $I$, we deduce from Strichartz' estimate that
$J(t)u(t) \in C(I,L^2)\cap L^{a}_{\mathrm{loc}}(I,L^{b})$
for any admissible pair $(a,b)$.
A similar argument shows that if the solution 
cannot extend beyond $t=T_1$ as a $\F H^1$-solution, that is,
if $\norm{J(t)u(t)}_{L^q((0,t),L^r)} \to \I$ as $t\uparrow T_1$ holds for some time $T_1>0$
then $\norm{u}_{L^\rho((0,t),L^\gamma)}\to\I$ as $t\uparrow T_1$.
However, such $T_1$ does not exist since $u \in L^\rho_{\mathrm{loc}}(\R,L^\gamma)$
holds from the well-posedness in $L^2$.
Thus, if $u_0 \in \F H^1$ then $J(t)u(t) \in C(\R,L^2) \cap L^{a}_{\mathrm{loc}}(\R,L^{b})$
for any admissible pair $(a,b)$.

Let us proceed to the proof of the second assertion.
Assume $u_0 \in \F H^1$ and \eqref{cond:Lrg}. 
Just as in the proof of $u(t) \in L^{a}((0,\I),L^{b})$,
we see from \eqref{eq:LWP2} that
$J(t)u(t) \in L^{a}((0,\I),L^{b})$ for any admissible pair $(a,b)$.
Then, it holds that
\[
	\norm{J(t)u(t)-e^{it\Delta} x\psi}_{L^\I([T,\I),L^2)}
	\le C_3\norm{u}_{L^\rho([T,\I),L^{\gamma})}^{p-1}
 \norm{J(t)u}_{L^q([T,\I),L^r)} \to 0 
\]
as $T\to\I$. This implies scattering in $\F H^1$ since
\[
	\norm{J(t)u(t)-e^{it\Delta} x\psi}_{L^2}
	=\Lebn{x(e^{-it\Delta}u(t)-\psi)}2.
\]
The case $u_0\in H^1$ is handled in a similar way.
\end{proof}
\begin{proposition}\label{prop:ScatterCond}
Suppose $p\in (\pst,1+\frac4N)$.
Let $u_0\in \F H^1$ and let $u(t)$ be the corresponding global solution
with $u(0)=u_0$. Then, $u$ scatters in $\F H^1$ for positive time
if and only if \eqref{cond:Lrg} holds.
\end{proposition}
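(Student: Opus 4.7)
The ``if'' direction is already contained in Proposition \ref{prop:GWP}, so the task reduces to the converse. My plan is to exploit the fact that a scattering profile in $\F H^1$ automatically carries the sharp time decay encoded in Lemma \ref{lem:decay}, and that the remainder $u(t)-e^{it\Delta}\psi$ inherits the same decay through the $\F H^1$-convergence.

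Assume $u$ scatters in $\F H^1$ for positive time and let $\psi \in \F H^1$ be such that $e^{-it\Delta}u(t) \to \psi$ in $\F H^1$ as $t\to\I$. Setting $w(t):=e^{-it\Delta}u(t)-\psi$, I would note that $\Lebn{w(t)}2\to0$ and $\Lebn{xw(t)}2\to0$, and then use the identity $u(t)-e^{it\Delta}\psi=e^{it\Delta}w(t)$ together with the decay estimate derived in the proof of Lemma \ref{lem:decay} (applied with $f=w(t)$) to obtain
\[
\Lebn{u(t)-e^{it\Delta}\psi}{\gamma}\le C|t|^{-\delta(\gamma)}\Lebn{w(t)}2^{1-\delta(\gamma)}\Lebn{xw(t)}2^{\delta(\gamma)}=:\eps(t)\,|t|^{-\delta(\gamma)},
\]
with $\eps(t)\to 0$ as $t\to\I$.

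Raising to the $\rho$-th power and integrating in time, the condition $\rho\delta(\gamma)>1$ recorded in \eqref{eq:acceptable}, which is exactly where $p>\pst$ is used, makes $|t|^{-\rho\delta(\gamma)}$ integrable at infinity, and dominated convergence yields $\norm{u-e^{it\Delta}\psi}_{L^\rho([T,\I),L^\gamma)}\to 0$ as $T\to\I$. Combined with the global bound $\norm{e^{it\Delta}\psi}_{L^\rho(\R,L^\gamma)}\le C\ell(\psi)$ from the second part of Lemma \ref{lem:decay}, this controls the tail $[T,\I)$. The finite-time piece $[0,T]$ is handled by the local integrability $u\in L^\rho_{\mathrm{loc}}(\R,L^\gamma)$ already observed in the proof of Proposition \ref{prop:GWP}. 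I do not anticipate any serious obstacle; the argument reduces to applying the $\F H^1$-decay estimate to the scattering remainder, and the Strauss threshold $p>\pst$ enters precisely to guarantee time-integrability at infinity.
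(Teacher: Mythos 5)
Your argument is correct and is essentially the paper's proof: both hinge on the decay estimate of Lemma \ref{lem:decay} applied to $u(t)=e^{it\Delta}\bigl(e^{-it\Delta}u(t)\bigr)$ together with the integrability of $t^{-\rho\delta(\gamma)}$ at infinity, which is exactly where $p>\pst$ enters. The only difference is that you split off the scattering profile $e^{it\Delta}\psi$ and treat the remainder separately, whereas the paper applies the same estimate directly to $u(t)$ using only the uniform bound $\norm{e^{-it\Delta}u(t)}_{\F H^1}\le C$ that convergence provides; your detour is harmless but unnecessary.
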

\begin{proof}
The ``if part'' is shown in Proposition \ref{prop:GWP}.
We shall prove the ``only if part.''
Assume $u$ scatters in $\F H^1$.
Since $u \in L^{2/\delta(\gamma)}_{\mathrm{loc}}(\R,L^\gamma)\subset L^\rho_{\mathrm{loc}}(\R,L^\gamma)$,
it suffices to show that $u \in  L^\rho((1,\I),L^\gamma)$.
Since $e^{-it\Delta}u(t)$ converges in $\F H^1$ as $t\to\I$, we have
\[
	\norm{e^{-it\Delta}u(t)}_{L^\I((0,\I),\F H^1)} \le C.
\]
Hence, for $t>0$,
\[
	\Lebn{u(t)}{\gamma} \le 
	C t^{-\delta(\gamma)} \norm{e^{-it\Delta}u(t)}_{\F H^1}
	\le Ct^{-\delta(\gamma)}.
\]
Notice that the right hand side belongs to $L^\rho_t((1,\I))$
since $\rho \delta(\gamma)>1$ holds by assumption $p>\pst$,
which completes the proof.
\end{proof}
\begin{remark}
By a similar manner, it is easy to see that $u(t)$ scatters in $\F H^1$ for negative time
if and only if $\norm{u}_{L^\rho((-\I,0),L^\gamma)}<\I$.
\end{remark}
\section{Proof of Theorem \ref{thm:main2}}
Although the existence of the minimal non-scattering solution (Theorem \ref{thm:main1})
has not been proven yet,
we first establish Theorem \ref{thm:main2} by showing that
standing wave solutions do not satisfy the minimality property 
(property (2) of Theorem \ref{thm:main1}).
\begin{lemma}\label{lem:NegativeEnergy1}
Suppose $p\in(\pst,1+\frac4N)$.
Let $u_0 \in H^1 \cap \F H^1$. If $E[u_0]<0$
then $u_0 \in S_+^c\cap S_-^c\subset S^c$.
\end{lemma}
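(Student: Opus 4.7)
The plan is to argue by contradiction using conservation of energy. Suppose toward a contradiction that $u_0 \in S_+$, and let $u(t)$ denote the corresponding solution. First I would invoke Proposition \ref{prop:ScatterCond} to convert $u_0 \in S_+$ into the space--time bound $\norm{u}_{L^\rho((0,\I),L^\gamma)}<\I$. The assumption $u_0 \in H^1$ together with the second parenthetical assertion of Proposition \ref{prop:GWP} then upgrades scattering to the $H^1$-norm as well, producing $\psi \in H^1 \cap \F H^1$ with
\[
	\tSobn{u(t)-e^{it\Delta}\psi}{1} + \Lebn{x(e^{-it\Delta}u(t)-\psi)}{2} \longrightarrow 0 \quad \text{as } t\to\I.
\]

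Next I would show $\Lebn{u(t)}{p+1}\to 0$. Applying Lemma~\ref{lem:decay} with $r=p+1$,
\[
	\Lebn{e^{it\Delta}\psi}{p+1} \le C|t|^{-\delta(p+1)} \Lebn{\psi}{2}^{1-\delta(p+1)}\Lebn{x\psi}{2}^{\delta(p+1)} \longrightarrow 0
\]
since $\delta(p+1)>0$. The Sobolev embedding $H^1\hookrightarrow L^{p+1}$, which is valid throughout the mass-subcritical range (one checks $p+1<2^*$ when $N\ge3$ directly from $p<1+4/N$), combined with the $H^1$-convergence above yields $\Lebn{u(t)-e^{it\Delta}\psi}{p+1}\to 0$, and hence $\Lebn{u(t)}{p+1}\to 0$.

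Since the free evolution is an isometry on $\dot H^1$, the $H^1$-scattering gives $\Lebn{\nabla u(t)}{2}\to\Lebn{\nabla\psi}{2}$. Passing to the limit in $E[u(t)]=E[u_0]$ then yields
\[
	E[u_0] = \lim_{t\to\I} E[u(t)] = \frac12\Lebn{\nabla\psi}{2}^2 \ge 0,
\]
contradicting $E[u_0]<0$. Thus $u_0\notin S_+$. Since $\overline{u_0}$ satisfies the same hypotheses with $E[\overline{u_0}]=E[u_0]<0$, the same argument applied to $\overline{u_0}$ gives $\overline{u_0}\notin S_+$, which by the conjugation symmetry noted following Theorem~\ref{thm:main1} is equivalent to $u_0\notin S_-$.

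The main subtlety I anticipate is the mismatch of spaces: $S_+$ is defined via $\F H^1$-scattering while the energy identity lives in $H^1$. The hypothesis $u_0\in H^1$ is exactly what allows Proposition~\ref{prop:GWP} to produce a single asymptotic state in $H^1\cap\F H^1$; this bridge is what makes both $\Lebn{\nabla u(t)}{2}\to\Lebn{\nabla\psi}{2}$ and $\Lebn{u(t)}{p+1}\to 0$ simultaneously available, and without it the energy-based contradiction would not close.
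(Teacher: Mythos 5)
Your argument is correct, but it takes a noticeably longer route than the paper. The paper's proof runs in the opposite direction and avoids the scattering asymptotics altogether: from conservation of energy, $E[u(t)]=E[u_0]<0$ for all $t$, and since $E[u(t)]=\frac12\Lebn{\nabla u(t)}2^2-\frac1{p+1}\Lebn{u(t)}{p+1}^{p+1}$, one gets the \emph{uniform lower bound} $\Lebn{u(t)}{p+1}^{p+1}\ge -(p+1)E[u_0]>0$; because $\gamma=p+1$ and $\rho<\I$, this immediately forces $\norm{u}_{L^\rho((0,\I),L^\gamma)}=\I$, and Proposition \ref{prop:ScatterCond} concludes $u_0\in S_+^c$ (and similarly for $S_-$). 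You instead argue by contradiction, upgrade to $H^1$-scattering via Proposition \ref{prop:GWP}, show $\Lebn{u(t)}{p+1}\to0$ using Lemma \ref{lem:decay} and the Sobolev embedding, and pass to the limit in the energy to get $E[u_0]\ge0$. Both proofs rest on the same two pillars (energy conservation and the characterization of $S_+$ via the $L^\rho_tL^\gamma_x$ norm), and your steps all check out — in particular the identification of a single asymptotic state $\psi\in H^1\cap\F H^1$ and the vanishing of the potential energy are handled correctly. What your version buys is an explicit display of the mechanism (scattering forces the potential energy to vanish asymptotically); what the paper's version buys is economy — it needs neither the $H^1$-scattering statement, nor Lemma \ref{lem:decay}, nor Sobolev embedding, only the trivial observation that a function of $t$ bounded below by a positive constant cannot lie in $L^\rho((0,\I))$ for finite $\rho$.
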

\begin{proof}
It is known that the energy is a conserved quantity.
Hence, $E[u(t)]=E[u_0]<0$ for all $t\in \R$.
It then follows that
$\Lebn{u(t)}{p+1}^{p+1} \ge -(p+1) E[u_0] >0$.
This proves $\norm{u}_{L^\rho((0,\I),L^\gamma)}=\I$ since $\gamma=p+1$ and $\rho<\I$.
Thus, by Proposition \ref{prop:ScatterCond}, $u_0\in S_+^c$.
Similarly, $u_0 \in S_-^c$.
\end{proof}
\begin{theorem}\label{thm:main2'}
Suppose $p\in(\pst,1+\frac4N)$.
If $u_0 \in H^1 \cap \F H^1$ and if $E[u_0]<0$
then $\ell(u_0) > \inf\{ \ell(f) \ |\ f \in \F H^1 \setminus S\}$.
\end{theorem}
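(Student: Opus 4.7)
The plan is to construct a companion initial datum $v_0 \in H^1 \cap \F H^1$ that shares the $L^2$ and weighted-$L^2$ norms of $u_0$ (so that $\ell(v_0) = \ell(u_0)$), still has negative energy, but whose NLS evolution $v(t)$ has $\Lebn{xv(t)}2^2$ strictly decreasing at $t = 0$. Granting this, for small $t_0 > 0$ the point $v(t_0)$ lies in $H^1 \cap \F H^1$ with $E[v(t_0)] = E[v_0] < 0$ by conservation, so Lemma~\ref{lem:NegativeEnergy1} places it in $\F H^1 \setminus S$; mass conservation gives $\Lebn{v(t_0)}2 = \Lebn{u_0}2$, while by construction $\Lebn{xv(t_0)}2 < \Lebn{xu_0}2$. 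Since the exponent $\frac{2}{p-1} - \frac{N}{2}$ on $\Lebn{xf}2$ in \eqref{def:ell} is strictly positive for $p < 1 + \frac{4}{N}$, we will conclude $\ell(v(t_0)) < \ell(v_0) = \ell(u_0)$, hence $\inf\{\ell(f):f \in \F H^1 \setminus S\} \le \ell(v(t_0)) < \ell(u_0)$, as desired.

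For the construction I will use the Gaussian phase family $v_0^\beta(x) := e^{-i\beta|x|^2} u_0(x)$, $\beta \in \R$. Because $|v_0^\beta| = |u_0|$, the norms $\Lebn{v_0^\beta}2$, $\Lebn{xv_0^\beta}2$ and $\Lebn{v_0^\beta}{p+1}$ coincide with those of $u_0$; in particular $\ell(v_0^\beta) = \ell(u_0)$. Expanding $\nabla v_0^\beta = e^{-i\beta|x|^2}(\nabla u_0 - 2i\beta x u_0)$ yields
\[
\Lebn{\nabla v_0^\beta}2^2 = \Lebn{\nabla u_0}2^2 - 4\beta P + 4\beta^2 \Lebn{xu_0}2^2, \qquad P := \Im \int \bar u_0 \, x \cdot \nabla u_0 \, dx,
\]
so that $E[v_0^\beta] = E[u_0] - 2\beta P + 2\beta^2 \Lebn{xu_0}2^2$ is a parabola in $\beta$ whose minimum at $\beta^* := P / (2 \Lebn{xu_0}2^2)$ equals $E[u_0] - P^2/(2\Lebn{xu_0}2^2) \le E[u_0] < 0$. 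By continuity, $E[v_0^\beta] < 0$ persists on a neighborhood of $\beta^*$.

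Finally, applying the standard virial identity $\frac{d}{dt}\Lebn{xu(t)}2^2 = 4\,\Im \int \bar u \, x \cdot \nabla u \, dx$ to $v^\beta$ at $t=0$, the same expansion of $\nabla v_0^\beta$ reduces the integrand to $\Im(\bar u_0 \, x\cdot\nabla u_0) - 2\beta |x|^2 |u_0|^2$ and gives
\[
\frac{d}{dt}\Lebn{xv^\beta(t)}2^2 \Big|_{t=0} = 4P - 8\beta \Lebn{xu_0}2^2,
\]
which is strictly negative as soon as $\beta > \beta^*$. Taking any $\beta$ slightly larger than $\beta^*$ secures both requirements of the first paragraph and completes the proof. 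The hard part is to break the scale-invariance of $\ell$ compatibly with negative energy; the Gaussian phase $e^{-i\beta|x|^2}$ is the natural tool because it preserves $|u_0|^2$ (hence $\ell$, $\Lebn{u_0}{p+1}$, and the initial variance) while freely tilting the gradient and the momentum $P$, and it is a virial-type coincidence that the single value $\beta^*$ controls both the minimum of $E[v_0^\beta]$ and the sign of the variance's first derivative.
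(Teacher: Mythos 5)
Your argument is correct, but it takes a genuinely different and considerably longer route than the paper's. The paper's proof is a two-line scaling argument: $\ell$ is homogeneous of degree one under amplitude scaling, $\ell(cf)=c\,\ell(f)$ (the two exponents in \eqref{def:ell} sum to $1$), and $c\mapsto E[cu_0]$ is continuous with $E[u_0]<0$, so one picks $c_0\in(0,1)$ with $E[c_0u_0]<0$; then $c_0u_0\notin S$ by Lemma \ref{lem:NegativeEnergy1} while $\ell(c_0u_0)=c_0\,\ell(u_0)<\ell(u_0)$. No virial identity, no phase modulation, no use of the flow. Your route instead keeps the mass fixed and breaks the scale-invariance of $\ell$ through the variance: the quadratic-phase modulation $e^{-i\beta|x|^2}$ preserves $\ell$ and the potential energy while shifting the kinetic energy and the momentum $P$, and your computations (the expansion of $\Lebn{\nabla v_0^\beta}2^2$, the minimizer $\beta^*=P/(2\Lebn{xu_0}2^2)$, and the sign of $\frac{d}{dt}\Lebn{xv^\beta(t)}2^2\big|_{t=0}=4P-8\beta\Lebn{xu_0}2^2$ for $\beta>\beta^*$) are all correct; the regularity needed to run the virial identity and to place $v(t_0)$ in $H^1\cap\F H^1$ is supplied by Proposition \ref{prop:GWP} (note $xv(t)=J(t)v(t)-2it\nabla v(t)\in L^2$). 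Since $\frac2{p-1}-\frac{N}2>0$ in the mass-subcritical range, lowering $\Lebn{xv(t_0)}2$ at fixed mass indeed lowers $\ell$, and Lemma \ref{lem:NegativeEnergy1} applies to $v(t_0)$ by energy conservation. What your version buys is a non-scattering competitor with the \emph{same mass} as $u_0$ and strictly smaller variance; what it costs is the entire virial apparatus, which the statement does not require. It is worth noticing that the one-line observation $\ell(cf)=c\,\ell(f)$ would have let you bypass the construction entirely.
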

\begin{proof}
Suppose $u_0 \in H^1 \cap \F H^1$ and $E[u_0]<0$.
Let $c>0$ be a real number.
Since $E[cu_0]$ is continuous with respect to $c$ and
since $E[u_0]<0$, there exists a number $c_0 \in (0,1)$ such that
$E[c_0u_0]<0$. Then, $c_0u_0 \not\in S$ by Lemma
\ref{lem:NegativeEnergy1}. Moreover,
\[
	\ell ( u_0) > c_0\ell (u_0)=\ell (c_0 u_0) \ge 
	\inf\{ \ell(f) \ |\ f \in \F H^1 \setminus S\},
\]
which is the desired estimate.
\end{proof}
By this theorem, if $u_0$ has negative energy then it cannot be equal to $\ucz$.
The next well known lemma completes the proof of Theorem \ref{thm:main2}.
\begin{lemma}
If $\varphi \in H^1$ is a non-trivial solution to \eqref{eq:elliptic}
then $\varphi \in H^1 \cap \F H^1$ and $E[\varphi]<0$.
\end{lemma}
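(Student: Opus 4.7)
The plan is to establish the two assertions separately. For the inclusion $\varphi \in \F H^1$, I would invoke standard decay theory for semilinear elliptic equations: any $H^1$-solution $\varphi$ of \eqref{eq:elliptic} is smooth (by elliptic bootstrap) and satisfies a pointwise bound $|\varphi(x)| \lesssim e^{-c|x|}$ for some $c > 0$, via the classical comparison/Kato argument applied to $(-\Delta + 1)\varphi = |\varphi|^{p-1}\varphi$ (see e.g.\ Berestycki--Lions). Exponential decay implies $x\varphi \in L^2$, hence $\varphi \in \F H^1$.

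For the negativity of the energy, the plan is to derive the Nehari and Pohozaev identities. Testing \eqref{eq:elliptic} against $\bar\varphi$ and integrating by parts gives
\[
  \Lebn{\nabla\varphi}{2}^2 + \Lebn{\varphi}{2}^2 = \Lebn{\varphi}{p+1}^{p+1},
\]
and testing against $x\cdot\nabla\bar\varphi$ (justified by the decay from step~1) gives the Pohozaev identity
\[
  \frac{N-2}{2}\Lebn{\nabla\varphi}{2}^2 + \frac{N}{2}\Lebn{\varphi}{2}^2 = \frac{N}{p+1}\Lebn{\varphi}{p+1}^{p+1}.
\]
Writing $A=\Lebn{\nabla\varphi}{2}^2$, $B=\Lebn{\varphi}{2}^2$, I would eliminate the $L^{p+1}$-term between these two relations. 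Note that $p < 1 + 4/N \le (N+2)/(N-2)$ for $N\ge3$ (and the constraint is trivial for $N=1,2$), so the denominator appearing is strictly positive, and one obtains
\[
  A = \frac{N(p-1)}{(N+2)-(N-2)p}\, B.
\]

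Substituting back into $E[\varphi]=\tfrac12 A - \tfrac{1}{p+1}(A+B)$ and collecting terms yields
\[
  E[\varphi] = \frac{B}{2(p+1)\bigl((N+2)-(N-2)p\bigr)} \Bigl[ N(p-1)^2 - 2\bigl((N+2)-(N-2)p\bigr) \Bigr].
\]
The bracket factors as $N(p+1)(p - 1 - 4/N)$, a key algebraic identity that exhibits the mass-critical exponent $p=1+4/N$ as the sign-change threshold. Hence
\[
  E[\varphi] = \frac{N\bigl(p - 1 - \tfrac{4}{N}\bigr)}{2\bigl((N+2)-(N-2)p\bigr)}\, \Lebn{\varphi}{2}^2 < 0,
\]
since the numerator is negative in the mass-subcritical regime $p < 1 + 4/N$, the denominator is positive, and $B>0$ as $\varphi$ is non-trivial.

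The main obstacle is the rigorous justification of the Pohozaev identity, which requires the integration by parts with the unbounded weight $x\cdot\nabla\bar\varphi$. This is handled by a cutoff--and--limit argument using a smooth radial truncation, where the boundary terms at infinity vanish thanks to the exponential decay established in the first step; the remaining algebra is then a straightforward substitution.
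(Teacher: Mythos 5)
Your proposal is correct and takes essentially the same approach as the paper: exponential decay of solutions to \eqref{eq:elliptic} gives $\varphi\in\F H^1$, and the combination of the Nehari and Pohozaev identities gives the sign of the energy. The paper merely records the equivalent formula $E[\varphi]=\frac{N(p-1)-4}{2N(p-1)}\Lebn{\nabla\varphi}2^2$ in terms of the gradient norm rather than your expression in terms of $\Lebn{\varphi}2^2$; the underlying algebra is the same.
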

\begin{proof}
It is known that $\varphi$ decays exponentially. Hence, $\varphi\in \F H^1$.
Further,
\[
	E[\varphi]= \frac{N(p-1)-4}{2N(p-1)}\Lebn{\nabla \varphi}2^2<0 
\]
follows from Pohozaev's identity.
\end{proof}
\section{Long-time perturbation theory and its applications}
In this section, we establish the following proposition.
\begin{proposition}[long-time perturbation theory]\label{prop:lpt}
Suppose $\pst<p<1+\frac4N$.
Let $\widetilde{u}(t,x)$ be a function defined on $[0,\I) \times \R^N$ such that
$e^{-it\Delta}\widetilde{u} \in \F H^1$ for all $t\ge0$.
Define an error function $e$ by
\[
	e := i\d_t \widetilde{u} + \Delta \widetilde{u}
	+ |\widetilde{u}|^{p-1} \widetilde{u}.
\]
Let $u_{0} \in \F H^1 $ and let $u(t)$ be the corresponding unique 
global solution of \eqref{eq:NLS} such that $u(0)=u_{0}$.
\begin{enumerate}
\item For each $A>0$, there exists $\eps_0=\eps_0(A)>0$
such that the following holds;
if
\begin{equation}\label{asmp:ltp1}
\left\{
\begin{aligned}
	&\norm{\widetilde{u}}_{L^{\rho}((0,\I),L^\gamma)} \le A, \\
	&\norm{e}_{L^{\widetilde{\rho}'}((0,\I),L^{\widetilde{\gamma}'})}\le \eps,\\
	&\norm{e^{it\Delta}(u(0)-\widetilde{u}(0)) }_{L^{\rho}((0,\I),L^\gamma)}
	\le \eps
\end{aligned}
\right.
\end{equation}
for $0<\eps\le \eps_0$ then $u$
satisfies $\norm{u}_{L^{\rho}((0,\I),L^\gamma)} \le A+\eps^{\frac1p}$.
\item There exist positive constants $C_0$ and $\delta$ such that if
\begin{align*}
	R:={}& \norm{e^{it\Delta}(u(0)-\widetilde{u}(0)) }_{L^{\rho}((0,\I),L^\gamma)}\\
	&{}+\norm{\widetilde{u}}_{L^{\rho}((0,\I),L^\gamma)}+C_0\norm{e}_{L^{\widetilde{\rho}'}((0,\I),L^{\widetilde{\gamma}'})}
\end{align*}
satisfies $R  < \delta$
then $\norm{u}_{L^{\rho}((0,\I),L^\gamma)}\le 2R$.
\end{enumerate}
\end{proposition}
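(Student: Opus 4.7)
The plan is to reduce both parts to a Strichartz bootstrap for the difference $w := u - \widetilde u$, which satisfies the Duhamel identity
\[
w(t) = e^{it\Delta}w(0) + i\int_0^t e^{i(t-s)\Delta}\bigl(|u|^{p-1}u - |\widetilde u|^{p-1}\widetilde u + e\bigr)(s)\,ds
\]
(up to a sign coming from $\l=-1$). Applying the non-admissible Strichartz estimate \eqref{eq:Str} on any subinterval $I\subset[0,\I)$ with base point $t_0\in I$, combined with the pointwise bound $||u|^{p-1}u - |\widetilde u|^{p-1}\widetilde u| \le C(|u|+|\widetilde u|)^{p-1}|w|$, H\"older in $(t,x)$, and the key exponent matching $\rho=p\widetilde\rho'$, $\gamma=p\widetilde\gamma'$ from \eqref{eq:rhotrho}, I obtain the master estimate
\[
\norm{w}_{L^\rho(I,L^\gamma)} \le C_1\norm{e^{i(t-t_0)\Delta}w(t_0)}_{L^\rho(I,L^\gamma)} + C_1\norm{e}_{L^{\widetilde\rho'}(I,L^{\widetilde\gamma'})} + C_1\bigl(\norm{\widetilde u}^{p-1}_{L^\rho(I,L^\gamma)} + \norm{w}^{p-1}_{L^\rho(I,L^\gamma)}\bigr)\norm{w}_{L^\rho(I,L^\gamma)}.
\]

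For part (2), I take $I=(0,\I)$, $t_0=0$, and set $C_0:=C_1$. For $R<\delta$ with $\delta=\delta(C_1,p)$ small, the continuity method--used on the locally finite quantity $\norm{w}_{L^\rho((0,T),L^\gamma)}$, which is small for $T$ close to $0$ and depends continuously on $T$--absorbs both the $\widetilde u$ and the $w$ self-interaction terms on the right-hand side. This yields $\norm{w}_{L^\rho((0,\I),L^\gamma)}\le R$ after a final tuning of constants, and the triangle inequality $\norm{u}_{L^\rho L^\gamma}\le \norm{w}_{L^\rho L^\gamma}+\norm{\widetilde u}_{L^\rho L^\gamma}\le 2R$ gives the claim.

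For part (1), I partition $[0,\I)$ into $J$ successive subintervals $I_j=[t_j,t_{j+1})$ with $\norm{\widetilde u}_{L^\rho(I_j,L^\gamma)}\le\eta$ for an absolute $\eta$ chosen once so that $C_1\eta^{p-1}\le 1/4$, which forces $J\le (A/\eta)^\rho+1$. On each $I_j$ the master estimate plus a continuity bootstrap (exactly as in part (2), now using the smallness of $\norm{\widetilde u}_{L^\rho(I_j,L^\gamma)}$ in place of global smallness) yields
\[
\norm{w}_{L^\rho(I_j,L^\gamma)}\le 2C_1 E_j,\qquad E_j:=\norm{e^{i(t-t_j)\Delta}w(t_j)}_{L^\rho(I_j,L^\gamma)}+\norm{e}_{L^{\widetilde\rho'}(I_j,L^{\widetilde\gamma'})}.
\]
Applying Duhamel and Strichartz on $[t_j,t_{j+1}]$ and estimating the nonlinear difference in $L^{\widetilde\rho'}L^{\widetilde\gamma'}$ by the already-obtained bound on $\norm{w}_{L^\rho(I_j,L^\gamma)}$ produces a recursion $E_{j+1}\le K E_j$ with $K=K(C_1)$, so $E_j\le K^j\eps$. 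Choosing $\eps_0(A)$ small enough that $K^J\eps_0\le\min(\delta_0,\eps_0^{1/p})$, where $\delta_0$ guarantees the bootstrap closes on each $I_j$, gives $\sum_j \norm{w}_{L^\rho(I_j,L^\gamma)}^\rho\le C(A)\eps^\rho$ and hence $\norm{u}_{L^\rho((0,\I),L^\gamma)}\le A+\eps^{1/p}$.

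The main obstacle is the bookkeeping in part (1): the per-step propagation constant $K$ compounds to $K^J$, so $\eps_0(A)$ must be chosen essentially doubly exponentially small in $A$, and at each stage one must verify that the self-interaction term $C_1\norm{w}^{p-1}\norm{w}$ remains below $\tfrac12\norm{w}$ to keep the continuity argument closed uniformly in $j$. Note that no weighted-norm machinery is needed: the conclusion concerns only the scaling-invariant quantity $\norm{\cdot}_{L^\rho L^\gamma}$, and the inputs \eqref{eq:Str} and \eqref{eq:rhotrho} together with the H\"older identity already encode the correct scaling.
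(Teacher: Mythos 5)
Your argument is correct, but for part (1) it takes a genuinely different route from the paper. You use the classical interval-decomposition scheme: split $[0,\I)$ into $J\lesssim (A/\eta)^{\rho}$ subintervals on which $\norm{\widetilde u}_{L^\rho(I_j,L^\gamma)}\le\eta$ is small, close a continuity bootstrap on each, and propagate the free-evolution error $E_j$ forward with a per-step constant $K$, accepting the compounded loss $K^J$ in the final choice of $\eps_0(A)$. The paper instead avoids any splitting: it works directly on $(0,\I)$, isolates the scalar inequality $\norm{\varphi}_{L^\rho((0,t))}\le (2C+1)\eps+\norm{f\varphi}_{L^{\widetilde\rho'}((0,t))}$ with $\varphi(t)=\Lebn{w(t)}{\gamma}$ and $f(t)=C\Lebn{\widetilde u(t)}{\gamma}^{p-1}$, and invokes the Gronwall-type Lemma \ref{lem:gronwall} (from \cite{FXC}) to obtain in one stroke the global bound $\norm{w}_{L^\rho((0,\I),L^\gamma)}\le(2C+1)\eps\,\Phi(C^{\rho/(p-1)}A^\rho)$, the self-interaction term $C\norm{w}^p$ being handled by the same continuity-in-$T$ device you use. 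What your approach buys is self-containedness and familiarity (no external Gronwall lemma, only Strichartz and H\"older); what the paper's approach buys is cleaner bookkeeping and an explicit, single-exponential-type dependence of $\eps_0$ on $A$ through $\Phi$, as opposed to your doubly exponential $K^J$. Both master estimates agree after combining terms via \eqref{eq:rhotrho}, and part (2) is handled essentially identically in the two arguments (global smallness plus absorption); the only cosmetic difference there is that the paper absorbs into $R_0=R-\norm{\widetilde u}_{L^\rho}$ and concludes $\norm{w}\le 2R_0$, while you aim for $\norm{w}\le R$ directly, and either normalization yields $\norm{u}_{L^\rho((0,\I),L^\gamma)}\le 2R$. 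One small point worth making explicit in your part (1): when you propagate $E_{j+1}\le K E_j$ you should define $E_j$ using the free evolution over the whole remaining half-line $(t_j,\I)$ (or equivalently insert the indicator $\mathbf{1}_{[t_j,t_{j+1}]}$ into the Duhamel integral before applying \eqref{eq:Str}), since the inhomogeneous estimate as stated requires the base point $t_0$ to lie in the interval of estimation; this is standard and does not affect the conclusion.
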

Let us now recall the following Gronwall-type inequality 
introduced in \cite[Lemma 8.1]{FXC}.
\begin{lemma}\label{lem:gronwall}
Let $1\le \beta < \gamma \le \I$, $0<T\le \I$, and let $f\in L^\rho((0,T))$,
where $1\le \alpha <\I$ is given by the relation $\alpha^{-1}=\beta^{-1}-\gamma^{-1}$.
If $\eta\ge0$ and $\varphi \in L^{\gamma}_{\mathrm{loc}}((0,T))$ satisfy
\[
	\norm{\varphi}_{L^\gamma((0,t))} \le \eta + \norm{f\varphi}_{L^\beta((0,t))}
\]
for all $0\le t\le T$, then it holds that
\[
	\norm{\varphi}_{L^\gamma((0,t))} \le \eta \Phi(\norm{f}_{L^\alpha((0,t))}^\alpha)
\]
for all $0\le t\le T$, where $\Phi(s):=2\Gamma(2^\alpha s+3)$ and $\Gamma$ is the Gamma function.
\end{lemma}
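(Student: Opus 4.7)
My plan is a standard partition-iteration argument. The driving observation is H\"older's inequality with exponents matching the defining relation $1/\beta = 1/\alpha + 1/\gamma$: for every sub-interval $I$,
\[
\|f\varphi\|_{L^\beta(I)} \le \|f\|_{L^\alpha(I)} \|\varphi\|_{L^\gamma(I)}.
\]
On pieces where $\|f\|_{L^\alpha}$ is small (say $\le 1/2$), this lets one absorb the nonlinear term back into the left-hand side of the hypothesized inequality, and the whole proof then reduces to controlling how many such pieces are needed to cover $(0,t)$.

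Concretely, I would choose a partition $0 = \tau_0 < \tau_1 < \ldots < \tau_K = t$ with $\|f\|_{L^\alpha((\tau_{j-1},\tau_j))}^\alpha = 2^{-\alpha}$ for $j < K$ and $\le 2^{-\alpha}$ at $j = K$; absolute continuity of $s \mapsto \|f\|_{L^\alpha((0,s))}^\alpha$ (finite $\alpha$) provides such points, and one gets $K \le 2^\alpha \|f\|_{L^\alpha((0,t))}^\alpha + 1$. Writing $I_j = (\tau_{j-1},\tau_j)$, $b_j = \|\varphi\|_{L^\gamma(I_j)}$, $c_k = \|\varphi\|_{L^\gamma((0,\tau_k))}$, $D_k = \|f\varphi\|_{L^\beta((0,\tau_k))}$, the hypothesis gives $c_k \le \eta + D_k$, hence $b_j \le c_j \le \eta + D_j$. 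Piecewise additivity of the $L^\beta$ integral, H\"older on each $I_j$, and the convexity bound $(a+b)^\beta \le 2^{\beta-1}(a^\beta + b^\beta)$ then combine to yield
\[
D_k^\beta \le 2^{-\beta}\sum_{j=1}^k b_j^\beta \le \tfrac{1}{2}\Bigl(k\eta^\beta + \sum_{j=1}^k D_j^\beta\Bigr).
\]
Absorbing the $j=k$ term produces the clean discrete recursion $D_k^\beta \le k\eta^\beta + \sum_{j<k} D_j^\beta$, and a short induction gives $D_k^\beta \le (2^k-1)\eta^\beta$, so $c_K \le \eta(1 + 2^{K/\beta})$. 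Inserting the bound on $K$ yields $\|\varphi\|_{L^\gamma((0,t))} \le \eta\,\Phi(\|f\|_{L^\alpha((0,t))}^\alpha)$ for an explicit exponential $\Phi$; since $\Gamma(2^\alpha s + 3)$ dominates any exponential of $2^\alpha s$, one can enlarge $\Phi$ to the stated $2\Gamma(2^\alpha s + 3)$.

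The main technical obstacle is the endpoint case $\gamma = \infty$, in which the $\ell^\gamma$-additivity $c_k^\gamma = \sum_j b_j^\gamma$ collapses to $c_k = \max_j b_j$ and the H\"older exponents degenerate to $\beta = \alpha$; the bookkeeping of the iteration changes, but the geometric-growth structure survives. A minor issue is the degenerate case $\|f\|_{L^\alpha((0,t))}^\alpha \le 2^{-\alpha}$, where one takes $K=1$ and the conclusion follows directly from the hypothesis after one Hölder step. Measurable dependence of the partition points $\tau_j$ on $t$ is automatic and causes no difficulty.
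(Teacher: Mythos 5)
Your proof is correct, but there is nothing in the paper to compare it against: the author does not prove this lemma at all, quoting it verbatim as Lemma 8.1 of [FXC] (Fang--Xie--Cazenave). So what you have produced is a self-contained substitute for an imported black box, by the standard partition--absorption argument. The key steps all check out: H\"older with $1/\beta=1/\alpha+1/\gamma$ on each piece, the count $K\le 2^\alpha\|f\|_{L^\alpha((0,t))}^\alpha+1$, the recursion $D_k^\beta\le k\eta^\beta+\sum_{j<k}D_j^\beta$ after absorbing the $j=k$ term, the induction giving $D_k^\beta\le(2^k-1)\eta^\beta$, and the final comparison $\eta\bigl(1+2^{(2^\alpha s+1)/\beta}\bigr)\le 2\eta\,\Gamma(2^\alpha s+3)$ (using $\beta\ge1$ and $\log\Gamma(u+3)$ increasing faster than $u\log 2$). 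Two small remarks. First, the endpoint $\gamma=\I$ that you flag as ``the main technical obstacle'' is in fact no obstacle: your argument never uses $\gamma$-additivity $c_k^\gamma=\sum_j b_j^\gamma$, only the monotonicity $b_j\le c_j$ (valid for all $\gamma\le\I$) and additivity of the $L^\beta$ integral, and $\beta\le\alpha<\I$ always; the proof goes through verbatim. Second, the absorption steps (both $D_k^\beta\le\tfrac12 D_k^\beta+\cdots$ and the $K=1$ case) require the a priori finiteness of $\|\varphi\|_{L^\gamma((0,\tau_k))}$ and hence of $D_k$; this is where the hypothesis $\varphi\in L^\gamma_{\mathrm{loc}}((0,T))$ enters, and the case $t=T$ should be recovered by monotone convergence. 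You should say this explicitly, though it is the same standard point every Gronwall-type proof elides. (Also note the statement's ``$f\in L^\rho((0,T))$'' is a typo for $f\in L^\alpha((0,T))$, as your proof correctly assumes.)
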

\begin{proof}[Proof of Proposition \ref{prop:lpt}]
Let $w$ be defined by $u-\widetilde{u}=w$.
Then $w$ solves the equation
\[
	i\d_t w+ \Delta w + (|\widetilde{u}+w|^{p-1}(\widetilde{u}+w)
	- |\widetilde{u}|^{p-1}\widetilde{u}) - e = 0.
\]
By non-admissible Strichartz' estimate \eqref{eq:Str} and by \eqref{eq:rhotrho},
we have
\begin{multline}\label{eq:ltp}
	\norm{w}_{L^\rho((0,t), L^\gamma)}
	\le 
	\norm{e^{it\Delta}(u_{0}-\widetilde{u}(0)) }_{L^{\rho}((0,t),L^\gamma)}\\
 	+C\norm{w}_{L^\rho((0,t), L^\gamma)}^{p}
   +C\norm{ \norm{\widetilde{u}(t)}_{L^\gamma}^{p-1}
 	\norm{w(t)}_{L^\gamma} 
	}_{L^{\widetilde{\rho}'}((0,t))}
	+C\norm{e}_{L^{\widetilde{\rho}'}((0,t),L^{\gamma'})}
\end{multline}
for all $t\ge0$.

Let us first prove the first assertion.
Take $A>0$. 
Let $\eps_0(A)>0$ be a number satisfying
\[
	\frac1{\eps_0^{p-1}} \ge 2 \left\{ (2C+1)\Phi\( C^{\frac\rho{p-1}}A^{\rho}\) \right\}^{p},
\]
where $\Phi$ is a function given in Lemma \ref{lem:gronwall}.
Assume \eqref{asmp:ltp1}.
By assumption on $u_{0}$ and $e$,
\begin{multline*}
	\norm{w}_{L^\rho((0,t), L^\gamma)}
	\le (C+1)\eps
	+C\norm{w}_{L^\rho((0,t), L^\gamma)}^p \\+ C
	\norm{ \norm{\widetilde{u}(t)}_{L^\gamma}^{p-1}
 	\norm{w(t)}_{L^\gamma} }_{L^{\widetilde{\rho}'}((0,t))}.
\end{multline*}
Take $T>0$ so that $\norm{w}_{L^\rho((0,T), L^\gamma)}^p \le\eps \le \eps_0$.
Then, we have
\[
	\norm{\varphi}_{L^\rho((0,t))}
	\le (2C+1)\eps
	+ 
	\norm{ f\varphi }_{L^{\widetilde{\rho}'}((0,t))}
\]
for $0\le t \le T$, where $\varphi(t)=\Lebn{w(t)}\gamma$ and
$f(t)=C \norm{\widetilde{u}(t)}_{L^\gamma}^{p-1}$.
Apply Lemma \ref{lem:gronwall} to yield
\begin{align*}
	\norm{\varphi}_{L^\rho((0,t))}
	&{}\le (2C+1)\eps
	\Phi\( 
	\norm{ f }_{L^{\frac\rho{p-1}}((0,t))}^{\frac\rho{p-1}}\) \\
	&{}\le (2C+1)\eps
	\Phi\( C^{\frac\rho{p-1}}
	\norm{ \widetilde{u} }_{L^{\rho}((0,\I))}^{\rho}\)
	\le \eps (2C+1)\Phi\( C^{\frac\rho{p-1}}A^{\rho}\).
\end{align*}
By the assumption on $\eps_0$, 
\[
	\norm{w}_{L^\rho((0,T), L^\gamma)}^p
	\le \eps^{p} \left\{ (2C+1)\Phi\( C^{\frac\rho{p-1}}A^{\rho}\) \right\}^{p}
	\le \frac{\eps}2 \(\frac{\eps}{\eps_0}\)^{p-1} \le \frac{\eps}2.
\]
By this estimate, we conclude that 
\[
	\sup_{T>0} \left\{ \norm{w}_{L^\rho((0,T), L^\gamma)}^p \le \eps  \right\}
	=\I.
\]
Hence, $\norm{w}_{L^\rho((0,\I), L^\gamma)} \le \eps^{1/p}$.
This completes the proof of the first assertion.

We shall proceed to the proof of the second assertion.
Now, let $\delta\in (0,1)$ to be chosen later.
Let $C_0$ be the coefficient of the right hand side of \eqref{eq:ltp}.
Assume $R< \delta$.
Set $R_0:= 
R- \norm{\widetilde{u}}_{L^{\rho}(I,L^\gamma)}$.
Then, \eqref{eq:ltp} gives us
\[
	\norm{w}_{L^\rho((0,t), L^\gamma)}
	\le R_0
	+C\norm{w}_{L^\rho((0,t), L^\gamma)}^p + C \delta^{p-1}
	\norm{w}_{L^{\rho}((0,t),L^\gamma)}
\]
for all $t\ge0$.
Choosing $\delta$ so that $C \delta^{p-1} \le 1/3$, we obtain
\begin{equation}\label{eq:lpt2}
	\norm{w}_{L^\rho((0,t), L^\gamma)}
	\le \frac32 R_0
	+\frac32 C\norm{w}_{L^\rho((0,t), L^\gamma)}^p.
\end{equation}
Now, set $f(x)=\frac32 R_0 + \frac32 C x^p$. Remark that
\[
	f(2R_0)=\frac{3R_0}2 + (3C(2R_0)^{p-1})R_0 \le R_0\(\frac32 + 3C(2\delta)^{p-1}\).
\]
Let $\delta$ be so small that $f(2R_0)< 2R_0$, if necessary.
For such $\delta$, \eqref{eq:lpt2} implies that $\norm{w}_{L^\rho((0,t), L^\gamma)}\le 2R_0$
for every $t\ge 0$, showing $\norm{u}_{L^{\rho}((0,\I),L^\gamma)}
\le \norm{\widetilde{u}}_{L^\rho((0,\I), L^\gamma)} + \norm{w}_{L^\rho((0,\I), L^\gamma)}
< 2R$.
\end{proof}
We introduce two consequent results.
The first one is small data scattering.
\begin{corollary}[small data scattering]\label{cor:sds}
Suppose $\pst<p<1+\frac4N$.
Let $u_0\in \F H^1$.
There exists $\eta_0>0$ such that
if $\norm{e^{it\Delta}u_0}_{L^\rho((0,\I),L^\gamma)} \le \eta_0$
then
$u_0 \in S_+$ and
the corresponding solution $u$  
 satisfies
\begin{equation}\label{eq:sds}
\norm{u}_{L^\rho((0,\I),L^\gamma)} \le 2 \norm{e^{it\Delta}u_0}_{L^\rho((0,\I),L^\gamma)}.
\end{equation}
Further,
there also exists $\eta_1>0$ such that
if $\ell(u_0) \le \eta_1$ then 
the same conclusion holds.
\end{corollary}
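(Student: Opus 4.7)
The plan is to apply Proposition \ref{prop:lpt}(2) with the approximate solution $\widetilde{u}(t,x) := e^{it\Delta}u_0$, which manifestly satisfies $e^{-it\Delta}\widetilde{u}(t) = u_0 \in \F H^1$ for every $t \ge 0$. Two simplifications occur: the initial-data mismatch vanishes, $u(0) - \widetilde{u}(0) = 0$, and since $\widetilde{u}$ solves the linear Schr\"odinger equation the error reduces to $e = |\widetilde{u}|^{p-1}\widetilde{u}$. Using the relations \eqref{eq:rhotrho}, this yields
\begin{equation*}
\norm{e}_{L^{\widetilde{\rho}'}((0,\I),L^{\widetilde{\gamma}'})} = \norm{\widetilde{u}}_{L^{\rho}((0,\I),L^\gamma)}^{p}.
\end{equation*}
Writing $\eta := \norm{e^{it\Delta}u_0}_{L^\rho((0,\I),L^\gamma)}$, the quantity $R$ from Proposition \ref{prop:lpt}(2) becomes $R = \eta + C_0 \eta^p$.

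Choose $\eta_0 > 0$ small enough that $\eta_0 + C_0\eta_0^p < \delta$, with $\delta$ as in Proposition \ref{prop:lpt}(2). A direct invocation of that proposition gives only $\norm{u}_{L^\rho L^\gamma} \le 2R = 2\eta + 2C_0\eta^p$, which falls short of \eqref{eq:sds}. However, the proof of Proposition \ref{prop:lpt}(2) actually yields the sharper intermediate bound $\norm{u - \widetilde{u}}_{L^\rho L^\gamma} \le 2R_0$ with $R_0 = R - \norm{\widetilde{u}}_{L^\rho L^\gamma} = C_0 \eta^p$. Combined with the triangle inequality this gives
\begin{equation*}
\norm{u}_{L^\rho((0,\I),L^\gamma)} \le \eta + 2C_0\eta^p = \eta\bigl(1 + 2C_0 \eta^{p-1}\bigr),
\end{equation*}
and a further reduction of $\eta_0$ so that $2C_0 \eta_0^{p-1} \le 1$ secures \eqref{eq:sds}. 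The scattering conclusion $u_0 \in S_+$ is then immediate from Proposition \ref{prop:ScatterCond}, since \eqref{cond:Lrg} is now finite.

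For the second statement, Lemma \ref{lem:decay} provides a constant $C_\ell$ with $\norm{e^{it\Delta}u_0}_{L^\rho((0,\I),L^\gamma)} \le C_\ell\, \ell(u_0)$ for every $u_0 \in \F H^1$. Setting $\eta_1 := \eta_0/C_\ell$, the hypothesis $\ell(u_0) \le \eta_1$ forces the linear $L^\rho L^\gamma$-norm to lie below $\eta_0$, and the first part applies verbatim. No step presents a serious obstacle; the only genuine subtlety is that a naive appeal to Proposition \ref{prop:lpt}(2) produces the inequality $\norm{u} \le 2R$ rather than $\norm{u} \le 2\eta$, so one must either extract the internal bound on $\norm{u - \widetilde{u}}$ from its proof, or shrink $\eta_0$ once more to absorb the nonlinear remainder into the factor $2$.
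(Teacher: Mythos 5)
Your argument is correct, and it rests on the same key tool as the paper, namely Proposition \ref{prop:lpt}(2); the only difference is the choice of comparison function. The paper simply takes $\widetilde{u}\equiv 0$: then $w=u$, the error $e$ vanishes identically, $R=\norm{e^{it\Delta}u_0}_{L^\rho((0,\I),L^\gamma)}$, and the stated conclusion $\norm{u}_{L^\rho((0,\I),L^\gamma)}\le 2R$ is literally \eqref{eq:sds} with no further work. Your choice $\widetilde{u}=e^{it\Delta}u_0$ kills the initial-data mismatch but resurrects the error term $e=|\widetilde{u}|^{p-1}\widetilde{u}$, which is why you end up with $R=\eta+C_0\eta^p$ and have to either extract the internal bound $\norm{u-\widetilde{u}}_{L^\rho L^\gamma}\le 2R_0=2C_0\eta^p$ from the proof of the proposition or shrink $\eta_0$ once more; both of your fixes are legitimate (the bound $\norm{w}\le 2R_0$ is indeed what the paper's proof of Proposition \ref{prop:lpt}(2) establishes before the final triangle inequality), and your computation $\norm{e}_{L^{\widetilde{\rho}'}L^{\widetilde{\gamma}'}}=\norm{\widetilde{u}}_{L^\rho L^\gamma}^p$ via \eqref{eq:rhotrho} is right. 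So the proof goes through, but the complication you flag as the ``only genuine subtlety'' is entirely an artifact of the choice of $\widetilde{u}$ and disappears with the zero comparison function. The deduction of the second statement from Lemma \ref{lem:decay} is exactly the paper's.
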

\begin{proof}
We apply the above proposition with $t_0=0$ and $\widetilde{u}\equiv0$.
Then, the former part follows immediately.
The latter part is a consequence of Lemma \ref{lem:decay}.
\end{proof}
\begin{remark}\label{rmk:sds}
A similar result holds for negative time and for both time direction.
We omit details.
\end{remark}
We next show that the solution of \eqref{eq:NLS} scatters for both
time directions if the initial is sufficiently ``oscillating.''
This result, which is an extension of \cite{CW1}, plays an important
role in the proof of the profile decomposition lemma.
\begin{proposition}[oscillating data scattering]\label{prop:ODS}
Suppose $\pst<p<1+\frac4N$.
For any $\psi\in \F H^1$ and any $\eps>0$, 
there exists $b_0>0$ such that
if $|b|>b_0$ then
\[
	\norm{e^{it\Delta} e^{ib|x|^2} \psi}_{L^\rho (\R,L^\gamma)} \le \eps.
\]
In particular, 
for any $\psi \in \F H^1$ there exists $b_1$ such that
if $|b|>b_1$ then $e^{ib|x|^2} \psi \in S$.
\end{proposition}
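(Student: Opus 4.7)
The plan is to exploit the pseudo-conformal (``lens'') identity
\[
(e^{it\Delta}(e^{ib|x|^2}\psi))(x) = (1+4bt)^{-N/2} e^{ib|x|^2/(1+4bt)} (e^{i\tau\Delta}\psi)\!\left(\tfrac{x}{1+4bt}\right),\qquad \tau := \tfrac{t}{1+4bt}.
\]
I would derive this by writing the Schr\"odinger kernel
$(e^{it\Delta}(e^{ib|x|^2}\psi))(x) = (4\pi it)^{-N/2}\int e^{i|x-y|^2/(4t)+ib|y|^2}\psi(y)\,dy$
and completing the square in $y$: the coefficient of $|y|^2$ in the phase becomes $\sigma = (1+4bt)/(4t)$, the critical point of the $y$-quadratic is $x/(4\sigma t) = x/(1+4bt)$, and the leftover $x$-phase simplifies to $b|x|^2/(1+4bt)$, so the residual $y$-integral is exactly the free propagator of $\psi$ at time $\tau = 1/(4\sigma)$, evaluated at the dilated point $x/(1+4bt)$.

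Taking $L^\gamma_x$ norms kills the unimodular phase and yields $\Lebn{e^{it\Delta}(e^{ib|x|^2}\psi)}{\gamma} = |1+4bt|^{-\delta(\gamma)}g(\tau(t))$, where $g(s):=\Lebn{e^{is\Delta}\psi}{\gamma}$. The map $t\mapsto \tau$ is a diffeomorphism of $\R\setminus\{-1/(4b)\}$ onto $\R\setminus\{1/(4b)\}$ satisfying $|1+4bt|=|1-4b\tau|^{-1}$ and $dt=|1-4b\tau|^{-2}\,d\tau$, so changing variables gives
\[
\norm{e^{it\Delta}(e^{ib|x|^2}\psi)}_{L^\rho_tL^\gamma_x}^\rho = \int_\R |1-4b\tau|^{\alpha}g(\tau)^\rho\,d\tau,\qquad \alpha := \rho\delta(\gamma)-2.
\]
By \eqref{eq:nonadm} and \eqref{eq:acceptable}, $\alpha\in(-1,0)$: the weight has an integrable singularity at $\tau = 1/(4b)$ and decays as $|\tau|\to\I$. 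The task reduces to showing the right-hand side tends to $0$ as $|b|\to\I$.

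I would then argue by density. For $\psi\in H^1\cap\F H^1$, Sobolev embedding (valid since $\gamma = p+1\le 2^*$) together with $H^1$-conservation of the free flow gives $\norm{g}_{L^\I(\R)}\le C\Sobn{\psi}{1}$. Splitting the $\tau$-integral at $|\tau|=1$: on $\{|\tau|>1\}$ the weight is bounded by $(4|b|-1)^\alpha$, so that piece is $O(|b|^\alpha)\norm{g}_{L^\rho}^\rho$; on $\{|\tau|\le 1\}$, extracting $\norm{g}_{L^\I}^\rho$ and computing $\int_{-1}^{1}|1-4b\tau|^\alpha\,d\tau$ via the dilation $s=4b\tau-1$ yields the bound $O(|b|^\alpha)$ as well. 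Both pieces vanish as $|b|\to\I$ because $\alpha<0$. For general $\psi\in\F H^1$ and $\eps>0$, choose $\psi_n\in H^1\cap\F H^1$ with $\ell(\psi-\psi_n)$ sufficiently small; by Lemma \ref{lem:decay} together with the invariance $\ell(e^{ib|x|^2}f)=\ell(f)$ under unimodular phase multiplication,
\[
\norm{e^{it\Delta}(e^{ib|x|^2}(\psi-\psi_n))}_{L^\rho(\R,L^\gamma)} \le C\ell(\psi-\psi_n),
\]
which controls the error uniformly in $b$, and the smooth-data estimate handles $\psi_n$.

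Finally, the ``in particular'' assertion is immediate: once $b_1$ is chosen so that $\norm{e^{it\Delta}(e^{ib|x|^2}\psi)}_{L^\rho(\R,L^\gamma)}\le\eta_0$ for $|b|>b_1$ (where $\eta_0$ is the threshold of Corollary \ref{cor:sds}), applying that corollary in both time directions via Remark \ref{rmk:sds} gives $e^{ib|x|^2}\psi\in S_+\cap S_- = S$. The principal technical subtlety is the singular weight $|1-4b\tau|^\alpha$ at $\tau = 1/(4b)$, but the integrability $\alpha>-1$ combined with the dilation $s=4b\tau-1$ (which compresses the singular region into an interval of size $O(|b|^{-1})$) reduces the whole estimate to the decay $O(|b|^\alpha)$.
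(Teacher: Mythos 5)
Your argument is correct, and in its analytic core it takes a genuinely different route from the paper. The paper only proves the new case ($b\to-\infty$ on $(0,\infty)$) directly, quoting Cazenave--Weissler for $b\to+\infty$, and after the same lens identity and change of variables it confronts head-on the collision between the weight's singularity at $s=1/(4b)$ and the possible blow-up of $\Lebn{e^{is\Delta}\psi}{\gamma}$ near $s=0$: it splits $(-\infty,1/(4b))$ into three pieces with a free parameter $\beta$, optimizes, and lands on the bound $C\norm{e^{is\Delta}\psi}_{L^{2/\delta(\gamma)}((-a,0),L^\gamma)}^{2-2/(\rho\delta(\gamma))}\ell(\psi)^{2/(\rho\delta(\gamma))-1}$, which is killed by letting $a\to0$. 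You instead perform the change of variables over all of $\R$ at once (correct: $t\mapsto\tau$ is a M\"obius map with $|1+4bt|=|1-4b\tau|^{-1}$ and $dt=|1-4b\tau|^{-2}d\tau$), and you defuse the colliding singularities by density: for $\psi_n\in H^1\cap\F H^1$ the factor $g$ is bounded via Sobolev embedding and unitarity of $e^{is\Delta}$ on $H^1$, so both pieces of the split at $|\tau|=1$ are $O(|b|^{\rho\delta(\gamma)-2})$, while the error $\psi-\psi_n$ is controlled uniformly in $b$ by Lemma \ref{lem:decay} since $\ell(e^{ib|x|^2}f)=\ell(f)$. Both proofs are sound; yours is self-contained (no appeal to \cite{CW1}), treats both signs of $b$ and both time directions in one stroke, and gives an explicit decay rate in $|b|$ for regular data, at the cost of an approximation step, whereas the paper's works directly with arbitrary $\F H^1$ data. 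The one point worth making explicit in a write-up is the density of $H^1\cap\F H^1$ in $\F H^1$ together with the elementary bound $\ell(f)\le C\norm{f}_{\F H^1}$ (both exponents in \eqref{def:ell} lie in $(0,1)$ and sum to one), which is what converts $\F H^1$-approximation into smallness of $\ell(\psi-\psi_n)$.
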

\begin{proof}
The latter half is an immediate consequence of the former,
by means of Corollary \ref{cor:sds} (see also Remark \ref{rmk:sds}).
It is known that
\[
	\norm{e^{it\Delta} e^{ib|x|^2} \psi}_{L^\rho ((0,\I),L^\gamma)}
	\to0
\]
as $b\to\I$
(see \cite{CW1} and \cite[Theorem 6.3.4]{CazBook}).
By symmetry, this also implies
$\norm{e^{it\Delta} e^{ib|x|^2} \psi}_{L^\rho ((-\I,0),L^\gamma)}
	\to0$
as $b\to-\I$.
Hence, it suffices to prove that
\[
	\norm{e^{it\Delta} e^{ib|x|^2} \psi}_{L^\rho ((0,\I),L^\gamma)}
	\to0
\]
as $b\to-\I$.
Let $b<0$.
One verifies that
\[
	e^{it\Delta}( e^{ib|x|^2}\psi)(x)
	=e^{i\frac{b}{1+4bt}|x|^2}(1+4bt)^{-\frac{N}2} \( e^{i\frac{t}{1+4bt}\Delta}\psi \)\(\frac{x}{1+4bt}\).
\]
Hence,
\[
	\Lebn{e^{it\Delta} e^{ib|x|^2}\psi}{\gamma}=\abs{1+4bt}^{-\delta(\gamma)} 
	\Lebn{e^{i\frac{t}{1+4bt}\Delta}\psi}{\gamma}.
\]
We estimate $L_t^\rho((0,1/4|b|))$ and $L_t^\rho((1/4|b|,\I))$ individually.
It holds that
\begin{align*}
	\norm{e^{it\Delta} e^{ib|x|^2}\psi}_{L_t^\rho((0,1/4|b|),L^\gamma)}^\rho={}&
	\int_0^{1/4|b|}\abs{1+4bt}^{-\rho\delta(\gamma)} 
	\Lebn{e^{i\frac{t}{1+4bt}\Delta}\psi}{\gamma}^\rho dt \\
	={}&\int_0^\I (1-4bs)^{\rho\delta(\gamma)-2} \Lebn{e^{is\Delta}\psi}{\gamma}^\rho ds.
\end{align*}
Since $\rho\delta(\gamma)-2<0$, the integrand of the right hand side tends to zero
as $b\to-\I$ for each $s>0$.
Further, Lemma \ref{lem:decay} gives us
\[
	\int_0^\I (1-4bs)^{\rho\delta(\gamma)-2} \Lebn{e^{is\Delta}\psi}{\gamma}^\rho ds
	\le \norm{e^{is\Delta}\psi}_{L^\rho_s((0,\I),L^\gamma)}^\rho \le C \ell(\psi)^\rho<\I.
\]
We then see from Lebesgue's convergence theorem that
\[
	\norm{e^{it\Delta} e^{ib|x|^2}\psi}_{L_t^\rho((0,1/4|b|),L^\gamma)} \to 0
\]
as $b\to-\I$.
Similarly, one deduces that
\[
	\norm{e^{it\Delta} e^{ib|x|^2}\psi}_{L_t^\rho((1/4|b|,\I),L^\gamma)}^\rho=
	\int_{-\I}^{1/4b} (1-4bs)^{\rho\delta(\gamma)-2} \Lebn{e^{is\Delta}\psi}{\gamma}^\rho ds.
\]
Fix a small number $a>0$. Then,
\[
	\int_{-\I}^{-a} (1-4bs)^{\rho\delta(\gamma)-2} \Lebn{e^{is\Delta}\psi}{\gamma}^\rho ds
	\le (1-4ba)^{\rho\delta(\gamma)-2}\norm{e^{is\Delta}\psi}_{L^\rho_s((-\I,0),L^\gamma)}
	\to0
\]
as $b\to-\I$.
Let $\beta=\beta(a)>0$ to be chosen later.
For sufficiently large $|b|$, we have
$-a<\frac1{4b}-\frac{\beta}b^2$.
By H\"older's inequality,
\begin{align*}
	&\int_{-a}^{\frac1{4b}-\frac{\beta}{b^2}}
	(1-4bs)^{\rho\delta(\gamma)-2} \Lebn{e^{is\Delta}\psi}{\gamma}^\rho ds\\
	&{}\le 
	\(\int_{-a}^{\frac1{4b}-\frac{\beta}{b^2}}
	(1-4bs)^{-2} ds\)^{\frac12(1-\frac{\rho\delta(\gamma)}2)}
	\norm{ e^{is \Delta}\psi }_{L^{2/\delta(\gamma)}((-a,0),L^\gamma)}^\rho \\
	&{}\le  c_1 \beta^{-\frac{1}2(1-\frac{\rho\delta(\gamma)}2)}
	\norm{ e^{is \Delta}\psi }_{L^{2/\delta(\gamma)}((-a,0),L^\gamma)}^\rho.
\end{align*}
On the other hand, by Lemma \ref{lem:decay},
\begin{multline*}
	\int_{\frac1{4b}-\frac{\beta}{b^2}}^{\frac1{4b}}
	(1-4bs)^{\rho\delta(\gamma)-2} \Lebn{e^{is\Delta}\psi}{\gamma}^\rho ds\\
	\le 
	C\ell(\psi)^\rho \int_{\frac1{4b}-\frac{\beta}{b^2}}^{\frac1{4b}}
	(1-4bs)^{\rho\delta(\gamma)-2} |s|^{-\rho\delta(\gamma)} ds	\le 
	c_2\beta^{\rho\delta(\gamma)-1}\ell(\psi)^{\rho}.
\end{multline*}
Combining these estimates, we obtain
\begin{multline*}
	\int_{-a}^{\frac1{4b}} (1-4bs)^{\rho\delta(\gamma)-2} \Lebn{e^{is\Delta}\psi}{\gamma}^\rho ds\\
	\le c_1 (\beta^{\frac{\delta(\gamma)}2-\frac1\rho}\norm{ e^{is \Delta}\psi }_{L^{2/\delta(\gamma)}((-a,0),L^\gamma)})^\rho + c_2 (\beta^{\delta(\gamma)-\frac1\rho}
	\ell({\psi}))^\rho.
\end{multline*}
Now, let
	$\beta = \norm{ e^{is \Delta}\psi }_{L^{2/\delta(\gamma)}((-a,0),L^\gamma)}^{2/\delta(\gamma)} \ell(\psi)^{-2/\delta(\gamma)}$.
Then,
\[
	\int_{-a}^{\frac1{4b}} (1-4bs)^{\rho\delta(\gamma)-2} \Lebn{e^{is\Delta}\psi}{\gamma}^\rho ds
	\le C \norm{ e^{is \Delta}\psi }_{L^{2/{\delta(\gamma)}}((-a,0),L^\gamma)}^{2\rho-\frac2{\delta(\gamma)}}
	\ell(\psi)^{\frac2{\delta(\gamma)}-\rho}.
\]
Thus, we reach to the estimate
\[
	\limsup_{b\to-\I} \norm{e^{it\Delta} e^{ib|x|^2}\psi}_{L_t^\rho((0,\I),L^\gamma)}
	\le C\norm{ e^{is \Delta}\psi }_{L^{2/\delta(\gamma)}((-a,0),L^\gamma)}^{2-\frac2{\rho\delta(\gamma)}}
	\ell(\psi)^{\frac2{\rho\delta(\gamma)}-1}.
\]
Recall that $a>0$ is arbitrary.
Since $e^{it\Delta}\psi \in L^{\frac2{\delta(\gamma)}}(\R,L^\gamma)$ by
Strichartz' estimate and
since $2/\delta(\gamma)<\I$ follows from $\gamma>2$, one sees that
\[
	\norm{ e^{is \Delta}\psi }_{L^{2/\delta(\gamma)}((-a,0),L^\gamma)} \to 0
\]
as $a\to0$.
Since $2-\frac{2}{\rho\delta(\gamma)}>0$ by \eqref{eq:acceptable},
we finally obtain
\[
	\limsup_{b\to-\I} \norm{e^{it\Delta} e^{ib|x|^2}\psi}_{L_t^\rho((0,\I),L^\gamma)}=0,
\]
which completes the proof.
\end{proof}
By this proposition, we obtain the following fact on the scattering set $S$.
\begin{corollary}
Suppose $\pst<p<1+\frac4N$.
The scattering set $S$ is an open subset of $\F H^1$ and unbounded in such a sense that
\[
	\sup_{u_0 \in S} \inf_{{\bf a}\in \R^N} \ell(u_0(\cdot-{\bf a}))
	= \sup_{u_0 \in S} \inf_{{\bf a}\in \R^N} \norm{u_0(\cdot-{\bf a})}_{\F H^1}
	=\I.
\]
\end{corollary}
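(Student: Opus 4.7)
The plan is to derive openness from the long-time perturbation theory (Proposition \ref{prop:lpt}), and to exhibit unboundedness by scaling up a fixed profile and then attaching a suitable oscillating phase via Proposition \ref{prop:ODS}.

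For openness, I would fix $u_0\in S$ with corresponding solution $u$; since $S=S_+\cap S_-$, Proposition \ref{prop:ScatterCond} and its negative-time analog give $A:=\norm{u}_{L^\rho(\R,L^\gamma)}<\I$. I would then apply Proposition \ref{prop:lpt}(1) separately on $(0,\I)$ and on $(-\I,0)$ with $\widetilde u=u$, so that the error $e$ vanishes identically. For any perturbed datum $v_0\in \F H^1$, Lemma \ref{lem:decay} together with the trivial estimate $\ell(f)\le\norm{f}_{\F H^1}$ (which follows from $\Lebn{f}{2},\Lebn{xf}{2}\le\norm{f}_{\F H^1}$ and the two exponents in $\ell$ summing to $1$) would give
\[
  \norm{e^{it\Delta}(v_0-u_0)}_{L^\rho(\R,L^\gamma)}\le C\ell(v_0-u_0)\le C\norm{v_0-u_0}_{\F H^1}.
\]
Taking $\norm{v_0-u_0}_{\F H^1}<\eps_0(A)/C$ then forces $v_0\in S$ via Proposition \ref{prop:ScatterCond}.

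For unboundedness, I would fix any nonzero $\psi_0\in \F H^1$ and, for each $n\in\N$, set $\psi_n:=n\psi_0$. Proposition \ref{prop:ODS} together with Remark \ref{rmk:sds} supplies $b_n$ such that $u_{0,n}:=e^{ib_n|x|^2}\psi_n\in S$. The identity $|u_{0,n}(x-{\bf a})|=n|\psi_0(x-{\bf a})|$ and translation-invariance of $\Lebn{\cdot}{2}$ immediately yield
\[
  \inf_{{\bf a}\in\R^N}\norm{u_{0,n}(\cdot-{\bf a})}_{\F H^1}\ge n\Lebn{\psi_0}{2}\to\I.
\]
Since $\Lebn{u_{0,n}(\cdot-{\bf a})}{2}$ is independent of ${\bf a}$ while the exponent of $\Lebn{x u_{0,n}(\cdot-{\bf a})}{2}$ in $\ell$ is positive, minimizing $\int|y+{\bf a}|^2|\psi_0(y)|^2\,dy$ in ${\bf a}$ at the mean of $|\psi_0|^2$ and using again that the exponents in $\ell$ sum to $1$ would lead to $\inf_{{\bf a}}\ell(u_{0,n}(\cdot-{\bf a}))=cn\to\I$ for a constant $c>0$ depending only on $\psi_0$.

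The argument is largely mechanical given Propositions \ref{prop:lpt} and \ref{prop:ODS}. The only step demanding attention is showing that passing to the infimum over translations does not kill the chosen growth: for the $\F H^1$-norm this is immediate from translation-invariance of $\Lebn{\cdot}{2}$, while for $\ell$ it reduces to strict positivity of the variance of $|\psi_0|^2$, which holds by the strict Cauchy--Schwarz inequality for any nontrivial $\psi_0\in L^2$.
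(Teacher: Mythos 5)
Your proposal is correct and follows essentially the same route as the paper: openness via Proposition \ref{prop:lpt} combined with Proposition \ref{prop:ScatterCond}, and unboundedness by scaling up a fixed profile and applying Proposition \ref{prop:ODS}. The only (harmless) difference is that the paper takes the profile radial so that the translation infimum of $\ell$ is trivially attained at ${\bf a}=0$, whereas you handle a general profile by minimizing the variance, and the paper bounds $\norm{\cdot}_{\F H^1}$ from below by $\ell$ rather than by the translation-invariant $L^2$ norm.
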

\begin{proof}
Openness immediately follows from Propositions \ref{prop:ScatterCond} and \ref{prop:lpt}.
We prove unboundedness. 
Take a nontrivial radial function $\psi\in \F H^1$.
Then, $\ell(\psi)=\inf_{{\bf a}\in \R^N} \ell(u_0(\cdot-{\bf a}))$.
By Proposition \ref{prop:ODS},
for any constant $C>0$ there exists $b_0=b_0(C)\in \R$ such that
$e^{ib_0|x|^2}(C\psi)\in S$. Then, 
$\norm{e^{ib_0|x|^2}C\psi}_{\F H^1} \ge\ell(e^{ib_0|x|^2}C\psi)=C\ell(\psi)$.
Since $C$ is arbitrary, unboundedness holds.
\end{proof}
\section{Profile decomposition}
This section is devoted to the proof of 
the profile decomposition lemma, which is one of the main tool 
for the proof of Theorem \ref{thm:main1}.
A similar property for a sequence bounded in $\dot{H}^1$
is established in \cite{Ker} (see also \cite{BG}), and applied in \cite{KM}
to the study of NLS equation.
In \cite{DHR,FXC}, this is established for sequences bounded in $H^1$.
\begin{proposition}[profile decomposition lemma]\label{prop:pd}
Suppose $\pst<p<1+\frac4N$.
Let $\{\phi_n\}_n$ be a bounded sequence in $\F H^1$.
There exist a subsequence of $\{\phi_n\}$,
which is denoted again by $\{\phi_n\}$,
and sequences $\{\psi^j\}_j\subset \F H^1$, $\{W_n^j\}_{n,j}\subset \F H^1$,
and $\{\xi_n^j\}_{n,j} \subset \R^N$ such that for every $l\ge1$
\begin{equation}\label{eq:decompose1b}
	\phi_n = \sum_{j=1}^l e^{i\xi_n^j\cdot x} \psi^j + W_n^l,
\end{equation}
in $\F H^1$ and, 
\begin{equation}\label{eq:decompose3b}
	\norm{\phi_n}_{\F \dot{H}^s}^2 -\sum_{j=1}^l \norm{\psi^j}_{\F \dot{H}^s}^2
	- \norm{W^l_n}_{\F \dot{H}^s}^2 \to 0
\end{equation}
as $n\to\I$ for all $s\in [0,1]$.
Furthermore, there exists $J\in [0,\I]$ such that 
$\psi^j\equiv 0$ for all $j\ge1$ if $J=0$,
$\psi^j\not\equiv 0$ for all $j\ge1$ if $J=\I$,
 and
\begin{equation}\label{eq:decompose4b}
	\psi^j\not\equiv0 \text{ for } j\le J , \qquad 
	\psi^j\equiv0 \text{ for }j> J
\end{equation}
holds if $J\in[1,\I)$.
For all $l\ge1$ and $1\le i,j\le J$, $i\neq j$,
\begin{equation}\label{eq:decompose5b}
	\lim_{n\to\I} |\xi_n^i-\xi_n^j| =\I.
\end{equation}
In addition,
\begin{equation}\label{eq:decompose6b}
	\limsup_{n\to\I} \norm{e^{it \Delta}W_n^l}_{L^\rho((0,\I),L^{\gamma})}\to0
\end{equation}
as $l\to\I$.
\end{proposition}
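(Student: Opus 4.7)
My plan is to pull the decomposition back to a standard $H^1$ profile decomposition on the Fourier side. Since $\phi \in \F H^1$ iff $\hat\phi \in H^1$, with $\Lebn{\phi}{\F \dot H^s}^2 = \int |\xi|^{2s}|\hat\phi(\xi)|^2 d\xi$ coinciding with $\|\hat\phi\|_{\dot H^s}^2$ for $s \in \{0,1\}$ (and by interpolation for $s \in [0,1]$), the bounded sequence $\{\phi_n\} \subset \F H^1$ corresponds to a bounded sequence $\{\hat\phi_n\} \subset H^1(\R^N)$. Crucially, physical-side frequency modulations $\phi \mapsto e^{i\xi\cdot x}\phi$ become Fourier-side translations $\hat\phi \mapsto \hat\phi(\cdot - \xi)$, which is exactly the family of symmetries appearing in \eqref{eq:decompose1b}.

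The first step is to invoke the standard $H^1$ profile decomposition of Hmidi--Keraani type (as employed in \cite{DHR,FXC}) applied to $\{\hat\phi_n\}$. Since $H^1$ has no scaling invariance, only translation parameters enter, producing
\[
\hat\phi_n(\xi) = \sum_{j=1}^l \hat\psi^j(\xi - \xi_n^j) + \hat W_n^l(\xi),
\]
with $|\xi_n^i - \xi_n^j| \to \I$ for $i \neq j$, the Pythagorean identity in $\dot H^s$ for every $s \in [0,1]$, and $\limsup_n \Lebn{\hat W_n^l}{q} \to 0$ as $l \to \I$ for every $q \in (2, 2^*)$. Taking inverse Fourier transforms immediately converts this into \eqref{eq:decompose1b}, \eqref{eq:decompose3b}, and \eqref{eq:decompose5b}; defining $J$ as the number of nontrivial profiles (relabeling if necessary) yields \eqref{eq:decompose4b}.

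The substantive remaining step is the scattering-norm smallness \eqref{eq:decompose6b}. Since $p \in (\pst, 1 + 4/N)$ forces $\gamma = p+1 \in (2, 2^*)$, the Fourier-side output supplies $\limsup_n \Lebn{\hat W_n^l}{\gamma} \to 0$ as $l \to \I$. To convert this into smallness of $\norm{e^{it\Delta}W_n^l}_{L^\rho(\R,L^\gamma)}$ I would combine the Fresnel-type representation already exploited in the proof of Proposition \ref{prop:ODS},
\[
\Lebn{e^{it\Delta}W_n^l}{\gamma} = C|t|^{-\delta(\gamma)} \Lebn{\mathcal{F}[e^{i|y|^2/(4t)} W_n^l]}{\gamma},
\]
with the decomposition $\mathcal{F}[e^{i|y|^2/(4t)} W_n^l] = \hat W_n^l + \mathcal{F}[(e^{i|y|^2/(4t)} - 1) W_n^l]$ and a time-splitting at a threshold $T = T_l$. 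For $|t| > T_l$ the leading piece is controlled by $T_l^{1/\rho - \delta(\gamma)} \Lebn{\hat W_n^l}{\gamma}$, while the correction is handled through Hausdorff--Young together with a quantitative smallness $\Lebn{(e^{i|y|^2/(4t)} - 1) W_n^l}{\gamma'} \lesssim |t|^{-\tilde\alpha} \Lebn{W_n^l}{\F H^1}$ for some $\tilde\alpha>0$, derived by interpolating $L^2$-dispersion of the multiplier against the weighted embedding $\F H^1 \hookrightarrow L^{(2^*)'}$, in the same spirit as the estimates in Proposition \ref{prop:ODS}. For $|t| \leq T_l$, Hölder combined with the admissible Strichartz estimate at $(2/\delta(\gamma), \gamma)$ yields a bound of order $T_l^{1/\rho - \delta(\gamma)/2} \Lebn{W_n^l}{2}$. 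Optimizing $T_l$ so that the two main contributions balance, and using $\Lebn{\hat W_n^l}{\gamma} \to 0$ as $l \to \I$, then forces \eqref{eq:decompose6b}.

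The main obstacle is precisely this final vanishing step: the Fourier-side $L^q$-smallness produced by the $H^1$ profile decomposition lies outside the Hausdorff--Young dual range for a direct pass to $L^{q'}$-smallness of $W_n^l$, so one really must exploit the Fresnel representation of $e^{it\Delta}$ together with oscillating-phase decay estimates of the flavour of Proposition \ref{prop:ODS}. A delicate but manageable point is that the error estimates must be uniform in $n$; this is secured by the uniform $\F H^1$-boundedness of the remainders $W_n^l$, which itself follows from the Pythagorean identities at $s = 0$ and $s = 1$. Everything else in the statement is extracted by routine subsequence arguments.
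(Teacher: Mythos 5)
Your reduction to a Fourier-side $H^1$ decomposition does reproduce \eqref{eq:decompose1b}--\eqref{eq:decompose5b}, but the argument for \eqref{eq:decompose6b} has a genuine gap, and it is exactly the one the paper flags in the remark after Lemma \ref{lem:pd3} (``it is not clear whether the estimate \eqref{eq:decompose6} follows from this correspondence''). The information you extract from the Fourier side, $\limsup_n\|\widehat{W_n^l}\|_{L^\gamma}\to0$, controls $e^{it\Delta}W_n^l$ only in the large-time regime, where $|t|^{\delta(\gamma)}\|e^{it\Delta}W\|_{L^\gamma}=c\,\|\F[e^{i|y|^2/4t}W]\|_{L^\gamma}\approx c\,\|\widehat W\|_{L^\gamma}$. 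Your time-splitting cannot be closed: the short-time Strichartz bound $T_l^{1/\rho-\delta(\gamma)/2}\|W_n^l\|_{L^2}$ forces $T_l\to0$ (the remainder carries no smallness in $L^2$ or $\F H^1$), while the Hausdorff--Young correction $\|(e^{i|y|^2/4t}-1)W_n^l\|_{L^{\gamma'}}\lesssim |t|^{-\tilde\alpha}\|W_n^l\|_{\F H^1}$ contributes $T_l^{1/\rho-\delta(\gamma)-\tilde\alpha}\|W_n^l\|_{\F H^1}$ over $(T_l,\I)$, which blows up as $T_l\to0$ because $1/\rho-\delta(\gamma)<0$. Since $T_l$ must be chosen independently of $n$ to take $\limsup_n$ first, no choice of $T_l$ makes both terms small, and balancing only the ``two main contributions'' as you propose leaves the correction term unbounded.

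The obstruction is not cosmetic. The sequences invisible to the Fourier-side decomposition but dangerous at moderate times are the quadratically modulated bubbles $W_n=e^{ib_n|x|^2}\psi$ with $b_n\to-\I$: they are bounded in $\F H^1$ and satisfy $\|\widehat{W_n}\|_{L^\gamma}\to0$, so they sit entirely in your remainder; yet $e^{it\Delta}W_n$ refocuses at $t=1/(4|b_n|)$, where $|t|^{\delta(\gamma)}\|e^{it\Delta}W_n\|_{L^\gamma}\gtrsim\|\widehat\psi\|_{L^\gamma}$ uniformly in $n$. Their scattering norm does vanish, but the proof of that fact is precisely Proposition \ref{prop:ODS}, which requires the delicate analysis of the refocusing time carried out there; Hausdorff--Young plus dispersion cannot see it. This is why the paper does not pass through the Fourier transform: it proves a bespoke inverse lemma (Lemma \ref{lem:pd1}) extracting profiles $e^{-i\xi_n\cdot x}e^{-it_n|x|^2}\psi$ directly from non-vanishing of $\sup_{t>0}|t|^{\delta(\gamma)}\|e^{it\Delta}v_n\|_{L^\gamma}$ --- a quantity that controls the scattering norm at all time scales via interpolation with Strichartz --- iterates it in Lemma \ref{lem:pd3}, and then removes the quadratic phases by absorbing bounded $t_n^j$ into redefined profiles and sending the $t_n^j\to-\I$ bubbles into the remainder by Proposition \ref{prop:ODS}. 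To repair your argument you would have to reinstate the quadratic modulation parameters (e.g.\ keep the time translations of the DHR/FXC decomposition, which become quadratic phases after Fourier inversion) and then invoke Proposition \ref{prop:ODS} for the divergent ones; at that point you have essentially reconstructed the paper's proof.
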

Let us begin the proof of Proposition \ref{prop:pd}
with the following lemma.
\begin{lemma}\label{lem:pd1}
Let $a>0$ and let $\{v_n \}_n \subset \F H^1$ satisfy
\begin{equation}\label{asmp:lem:unifbdd1}
	\limsup_{n\to\I} \norm{v_n}_{\F H^1} \le a <\I.
\end{equation}
If
\begin{equation}\label{asmp:lem:unifbdd2}
	\lim_{n\to\I} \norm{|t|^{\delta(\gamma)} \norm{e^{it\Delta}v_n }_{L^\gamma}}_{L^\I((0,\I))} = A,
\end{equation}
then there exist a subsequence, which denoted again by $\{v_n\}$, 
a function $\psi\in \F H^1$, and sequences
$\{t_n\}_n\subset (0,\I)$, $\{\xi_n\}_n \subset \R^N$,  and
$\{W_n\}_n \subset \F H^1$ such that
\begin{equation}\label{eq:lem:decompose}
	v_n = e^{-i \xi_n\cdot x}e^{-it_n{|x|^2}} \psi + W_n
\end{equation}
with
\begin{equation}\label{eq:lem:weaklimit1}
	e^{i \xi_n \cdot x}e^{it_n{|x|^2}} v_n \rightharpoonup \psi\IN \F H^1
\end{equation}
or equivalently
\begin{equation}\label{eq:lem:weaklimit2}
	e^{i \xi_n \cdot x}e^{it_n{|x|^2}} W_n \rightharpoonup 0 \IN \F H^1
\end{equation}
as $n\to\I$, and
\begin{equation}\label{eq:lem:Pythagorean}
	\norm{v_n}_{\F \dot{H}^s}^2 -\norm{W_n}_{\F \dot{H}^s}^2
	- \norm{\psi}_{\F \dot{H}^s}^2 \to 0
\end{equation}
as $n\to\I$ for all $s\in[0,1]$. Moreover, there exists a
constant $c$ independent of $\{v_n\}_n$, $a$, and $A$ such that
\begin{equation}\label{eq:lem:lowerbound}
	\norm{\psi}_{\F H^1} \ge  
 c a^{1-\frac{\gamma}{(\gamma-2)(1-\delta(\gamma))}}
	A^{\frac{\gamma}{(\gamma-2)(1-\delta(\gamma))}}.
\end{equation}
\end{lemma}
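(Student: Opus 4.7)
The plan is to convert the hypothesis \eqref{asmp:lem:unifbdd2} into a concentration statement on the Fourier side via the Fresnel representation of $e^{it\Delta}$, then extract a concentrating profile there, and finally invert to obtain the form \eqref{eq:lem:decompose}. Passing to a subsequence, pick $t_n>0$ with $t_n^{\delta(\gamma)}\|e^{it_n\Delta}v_n\|_{L^\gamma}\ge A/2$ for large $n$. Expanding $|x-y|^2$ in the integral-kernel formula for $e^{it\Delta}$ yields
\[
e^{it\Delta}f(x) = c_N\, t^{-N/2} e^{i|x|^2/(4t)}\,\widehat{e^{i|\cdot|^2/(4t)}f}\!\left(\tfrac{x}{2t}\right),
\]
and rescaling the $L^\gamma$-norm converts the selection above into $\|\widehat{g_n}\|_{L^\gamma}\ge cA$, where $g_n:=e^{i|y|^2/(4t_n)}v_n$. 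Since multiplication by $e^{i|y|^2/(4t_n)}$ is unitary on $L^2$ and commutes with multiplication by $x$, one has $\|g_n\|_2=\|v_n\|_2$ and $\|xg_n\|_2=\|xv_n\|_2$, so under the Plancherel identification of $\F H^1$ with $H^1$, the sequence $\widehat{g_n}$ is bounded in $H^1$ by $a+o(1)$.

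I would next concentrate $\widehat{g_n}$ in Fourier space by a cube argument. Partition $\R^N$ into unit cubes $\{Q_k\}_{k\in\Z^N}$, and let $Q_0$ be the cube centered at the origin. Sobolev embedding on each cube together with the additivity $\sum_k\|\widehat{g_n}\|_{H^1(Q_k)}^2=\|\widehat{g_n}\|_{H^1}^2$ yield
\[
\|\widehat{g_n}\|_{L^\gamma}^\gamma \le \sup_k \|\widehat{g_n}\|_{L^\gamma(Q_k)}^{\gamma-2}\sum_k\|\widehat{g_n}\|_{L^\gamma(Q_k)}^2 \le C\sup_k\|\widehat{g_n}\|_{L^\gamma(Q_k)}^{\gamma-2}\|\widehat{g_n}\|_{H^1}^2,
\]
hence $\sup_k\|\widehat{g_n}\|_{L^\gamma(Q_k)}\ge c A^{\gamma/(\gamma-2)}a^{-2/(\gamma-2)}$. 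Take $\xi_n$ to be the center of a cube $Q_{k_n}$ realizing this supremum up to a fixed constant. After a further subsequence, Banach--Alaoglu gives $\widehat{g_n}(\cdot+\xi_n)\rightharpoonup\widehat{\psi}$ weakly in $H^1$, and Rellich--Kondrachov upgrades this to strong $L^\gamma(Q_0)$-convergence, so $\|\widehat\psi\|_{L^\gamma(Q_0)}\ge c A^{\gamma/(\gamma-2)}a^{-2/(\gamma-2)}$. Sobolev embedding on $Q_0$ then transfers this to $\|\psi\|_{\F H^1}=\|\widehat\psi\|_{H^1}\ge c A^{\gamma/(\gamma-2)}a^{-2/(\gamma-2)}$; and since $A\le Ca$ by Lemma \ref{lem:decay} and \eqref{asmp:lem:unifbdd1}, this dominates the claimed $c a^{1-\kappa}A^\kappa$ with $\kappa=\gamma/((\gamma-2)(1-\delta(\gamma)))$, yielding \eqref{eq:lem:lowerbound}. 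Setting $\psi$ to be the inverse Fourier transform of $\widehat\psi$, undoing the Fourier transform, and relabelling $(\xi_n,t_n)\mapsto(-\xi_n,1/(4t_n))$ to match sign conventions then produces the decomposition \eqref{eq:lem:decompose} and the equivalent weak convergences \eqref{eq:lem:weaklimit1}--\eqref{eq:lem:weaklimit2}.

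The Pythagorean identity \eqref{eq:lem:Pythagorean} is then routine: $|x|^s$ commutes with the modulation $e^{-i\xi_n\cdot x}e^{-it_n|x|^2}$, so expanding $\||x|^s v_n\|_2^2$ from \eqref{eq:lem:decompose} leaves only the cross term $2\Re\langle|x|^s\psi,\,e^{i\xi_n\cdot x}e^{it_n|x|^2}|x|^s W_n\rangle$. Since $f\mapsto |x|^sf$ is bounded from $\F H^1$ to $L^2$ for $s\in[0,1]$ (by the interpolation $\||x|^s f\|_2\le\|f\|_2^{1-s}\|xf\|_2^s$), the weak convergence $e^{i\xi_n\cdot x}e^{it_n|x|^2}W_n\rightharpoonup 0$ in $\F H^1$ transfers to weak convergence of $|x|^s$ applied to the same sequence in $L^2$, killing the cross term as $n\to\infty$. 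The principal technical obstacle is bookkeeping: one must shepherd the three non-commuting transformations --- the quadratic phase $e^{i|x|^2/(4t_n)}$, the Fourier transform, and the translation by $\xi_n$ in Fourier space --- into the single formula \eqref{eq:lem:decompose} with correct signs. A secondary issue is that $t_n$ may degenerate to $0$ or $\infty$, but since $e^{-it_n|x|^2}$ is an isometry of $\F H^1$ for every $t_n\in\R$, this is harmless at the level of abstract weak compactness.
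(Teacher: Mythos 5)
Your argument is correct, but it extracts the profile by a genuinely different mechanism than the paper. The paper localizes in physical space with a cutoff $\chi_r(t)=e^{it\Delta}\zeta(x/r)e^{-it\Delta}$ (with $r\sim(a/A)^{1/(1-\delta(\gamma))}$ chosen so that the tail is negligible), interpolates $L^\gamma$ between $L^2$ and $L^\infty$ to find a time $1/(4t_n)$ and a point $-\xi_n/(2t_n)$ where $|t|^{N/2}|\chi_r e^{it\Delta}v_n|$ is bounded below, and then recognizes that pointwise value, via the Schr\"odinger kernel, as the fixed linear functional $\int\zeta(y/r)w_n\,dy$ applied to $w_n=e^{i\xi_n\cdot x}e^{it_n|x|^2}v_n$, which survives passage to the weak limit. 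You instead conjugate to the Fourier side via the Fresnel factorization --- the same algebraic identity as \eqref{eq:Jt} --- and run the standard inverse-Sobolev concentration argument on unit cubes, using Rellich--Kondrachov to retain the concentrated $L^\gamma$ mass in the limit. Both routes are sound; yours yields the cleaner and in fact stronger bound $\norm{\psi}_{\F H^1}\ge c\,a^{1-\mu}A^{\mu}$ with $\mu=\gamma/(\gamma-2)$, which implies \eqref{eq:lem:lowerbound} because $A\le Ca$ (Lemma \ref{lem:decay}) forces $(A/a)^{\mu}\ge c(A/a)^{\kappa}$ for the larger exponent $\kappa=\gamma/((\gamma-2)(1-\delta(\gamma)))$; the paper's exponent is lossier precisely because of the cutoff radius $r$. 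Your remaining steps (choice of $t_n$ as a near-maximizer, the relabeling $(\xi_n,t_n)\mapsto(-\xi_n,1/(4t_n))$, and the Pythagorean expansion via boundedness of $f\mapsto|x|^sf$ from $\F H^1$ to $L^2$) agree with the paper's. Two small points worth recording explicitly: the cube argument requires $\gamma<2^*$, which holds since $\gamma=p+1<2+\frac4N$; and when $A=0$ the lower bound is vacuous and any weak limit serves, a degenerate case the paper likewise leaves implicit.
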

\begin{proof}
Let $\zeta$ be a smooth nonnegative radial function such that $\zeta(x)=1$ for $|x|\le1$
and $\zeta(x)=0$ for $|x|\ge 2$.
Let
\[
	\chi_r(t) = e^{it\Delta} \zeta(x/r) e^{-it\Delta}.
\]
One sees from Lemma \ref{lem:decay} that
\begin{align*}
	\norm{e^{it\Delta} v_n - \chi_r(t) e^{it\Delta}v_n}_{L^\gamma}
	&{} = \norm{e^{it\Delta}(1- \zeta(x/r) )v_n}_{L^\gamma} \\
	&{} \le c_0 |t|^{-\delta(\gamma)} \Lebn{(1- \zeta(x/r) )v_n}2^{1-\delta(\gamma)}
	\Lebn{x v_n}2^{\delta(\gamma)} \\
	&{} \le c_0 |t|^{-\delta(\gamma)} r^{-(1-\delta(\gamma))}\Lebn{x v_n}2\\
	&{} \le c_0 |t|^{-\delta(\gamma)}a r^{-(1-\delta(\gamma))}.
\end{align*}
We chose $r=(2c_0 a A^{-1})^{1/(1-\delta(\gamma))}$.
This is possible because $\delta(\gamma)<1$.
We then deduce from \eqref{asmp:lem:unifbdd2} that
\begin{align*}
	&\sup_{t>0}|t|^{\delta(\gamma)}\norm{ \chi_r e^{it\Delta}v_n}_{L^\gamma}\\
	&{}\ge \sup_{t>0}\(|t|^{\delta(\gamma)}\norm{e^{it\Delta} v_n}_{L^\gamma}- 
	|t|^{\delta(\gamma)}\norm{e^{it\Delta} v_n - \chi_r e^{it\Delta}v_n}_{L^\gamma}\) \\
	&{}\ge \sup_{t>0}\(|t|^{\delta(\gamma)}\norm{e^{it\Delta} v_n}_{L^\gamma}- \frac12 A \) 
	\ge \frac34 A - \frac12 A = \frac{A}4
\end{align*}
for sufficiently large $n$.
By H\"older's inequality, it holds that
\begin{align*}
	|t|^{\delta(\gamma)}\norm{ \chi_r e^{it\Delta}v_n}_{L^\gamma}
	&{}\le |t|^{\delta(\gamma)}\norm{ \chi_r e^{it\Delta}v_n}_{L^2}^{\frac2\gamma}
	\norm{ \chi_r e^{it\Delta}v_n}_{L^\I}^{1-\frac2\gamma} \\
	&{}\le a^{\frac2\gamma} 
	\( |t|^{\frac{N}2}
	\norm{ \chi_r e^{it\Delta}v_n}_{L^\I}\)^{1-\frac2\gamma}
\end{align*}
for any $t\neq0$. Combining above estimates, we obtain
\[
	\sup_{t>0} |t|^{\frac{N}2}
	\norm{ \chi_r (t)e^{it\Delta}v_n}_{L^\I}
	\ge\( \frac{A}{4a^{2/\gamma}}\)^{\frac\gamma{\gamma-2}}.
\]
Hence, there exist sequences $\{t_n\}_n \subset (0,\I)$
 and $\{\xi_n\}_n \subset \R^N$ such that
\[
	|4 t_n|^{-\frac{N}2}
	\abs{\chi_r\(\frac1{4 t_n}\) e^{i\frac1{4t_n}\Delta}v_n} \(-\frac{\xi_n}{2t_n}\)
	\ge \( \frac{A}{8a^{2/\gamma}}\)^{\frac\gamma{\gamma-2}}.
\]
Set
\[
	w_n(x) :=  e^{i{\xi_n \cdot x}} e^{it_n{|x|^2}} v_n(x) .
\]
Since $\{w_n\}$ is bounded in $\F H^1$,
we can extract a subsequence,
denoted again by $\{w_n\}$, which converges weakly in $\F H^1$.
Let the weak limit $\psi \in \F H^1$.
Then, \eqref{eq:lem:weaklimit1} holds.
By the definition of $\chi_r$ and
the integral representation of $e^{i\frac1{4t_n}\Delta}$, one deduces that
\begin{align*}
	&(4t_n)^{-\frac{N}2}
	\(\chi_r \(\frac1{4t_n}\)e^{i\frac1{4t_n}\Delta}v_n\)\(-\frac{\xi_n}{2t_n}\)\\
	={}& (4\pi i)^{-\frac{N}2} \int_{\R^N} e^{it_n{|\xi_n/2t_n+y |^2}} \zeta\(\frac{y}r\) v_n(y) \\
	={}& (4\pi i)^{-\frac{N}2} e^{i\frac{|\xi_n|^2}{4t_n}}\int_{\R^N} \zeta\(\frac{y}r\) w_n(y) dy .
\end{align*}
By extracting a subsequence, $e^{i\frac{|\xi_n|^2}{4t_n}}$ converges.
Denote the limit by $e^{i\theta}$. Then, 
\begin{align*}
	(4t_n)^{-\frac{N}2}
	\(\chi_r \(\frac1{4t_n}\)e^{i\frac1{t_n}\Delta}v_n\)\(-\frac{\xi_n}{2t_n}\)
	\to (4\pi i)^{-\frac{N}2} e^{i\theta}\int_{\R^N} \zeta\(\frac{y}r\) \psi (y) dy.
\end{align*}
Therefore, for $n$ large enough,
\[
	|4t_n|^{-\frac{N}2}
	\abs{\chi_r\(\frac1{4t_n}\) e^{i\frac1{4t_n}\Delta}v_n} \(-\frac{\xi_n}{2t_n}\)
	\le c_1 r^{\frac{N}2} \norm{\psi}_{L^2}. 
\]
Thus, we conclude that
\[
	\norm{\psi}_{L^2} \ge 
	 C A^{\frac{\g}{\g-2}} a^{-\frac{2}{\g-2}} r^{ -\frac{N}2}
	= c a^{1-\frac{\gamma}{(\gamma-2)(1-\delta(\gamma))}}
	A^{\frac{\gamma}{(\gamma-2)(1-\delta(\gamma))}},
\]
which yields \eqref{eq:lem:lowerbound} since $\norm{\psi}_{L^2}
\le \norm{\psi}_{\F H^1}$.

Set
\begin{equation}\label{def:lem:Wn}
	W_n:= v_n - e^{-i\xi_n\cdot x}e^{-it_n{|x|^2}} \psi.
\end{equation}
Then, \eqref{eq:lem:weaklimit2}
immediately follows from \eqref{eq:lem:weaklimit1}.
Further, for $s \in [0,1]$,
\begin{align*}
	\norm{W_n}_{\F \dot{H}^s}^2
	={}& \norm{v_n}_{\F \dot{H}^s}^2 + \norm{\psi}_{\F \dot{H}^s}^2-
	2\Re \Jbr{v_n, e^{-i\xi_n\cdot x}e^{-it_n{|x|^2}} \psi }_{\F \dot{H}^s} \\
	={}& \norm{v_n}_{\F \dot{H}^s}^2 + \norm{\psi}_{\F \dot{H}^s}^2-
	2\Re \Jbr{w_n, \psi }_{\F \dot{H}^s}.
\end{align*}
By means of \eqref{eq:lem:weaklimit1},
 $\Re \Jbr{w_n, \psi }_{\F \dot{H}^s} \to \norm{\psi}_{\F \dot{H}^s}^2$ 
as $n\to \I$. This completes the proof of  \eqref{eq:lem:Pythagorean}.
\end{proof}
\begin{lemma}\label{lem:pd2}
Let $\{\tau_n\}_n \subset \R$ and $\{\xi_n \}_n \subset \R^N$ satisfy
\begin{equation}\label{eq:lem2:divergence}
	|\tau_n| + |\xi_n| \to \I
\end{equation}
as $n\to\I$. Then, if follows for all $\psi\in \F H^1$ that
\begin{equation}\label{eq:lem2:weaklimit}
	e^{i\tau_n |x|^2} e^{ix\cdot\xi_n} \psi \rightharpoonup 0 \IN \F H^1
\end{equation}
as $n\to\I$.
Conversely, if $\{z_n\}_N \subset \F H^1$ satisfy
\[
	z_n \rightharpoonup 0 \IN \F H^1, \quad e^{i\tau_n |x|^2} e^{ix\cdot\xi_n}z_n 
	\rightharpoonup \psi \IN \F H^1
\] 
as $n\to\I$ for some $\{\tau_n\}_n \subset (0,\I)$, $\{\xi_n \}_n \subset \R^N$ and $\psi \in \F H^1$, $\psi\neq0$,
then \eqref{eq:lem2:divergence} holds.
\end{lemma}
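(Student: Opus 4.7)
The plan is to handle the two halves separately, both by reductions to dense subspaces of $\F H^1$, followed either by an oscillatory-integral computation (forward direction) or a pointwise-to-strong argument on the multipliers (converse direction). For the forward direction, I first observe that multiplication by the unit-modulus factor $e^{i\tau_n|x|^2}e^{ix\cdot\xi_n}$ preserves the $\F H^1$-norm, so the sequence $\phi_n := e^{i\tau_n|x|^2}e^{ix\cdot\xi_n}\psi$ is bounded in $\F H^1$. Since $\F H^1$ is a Hilbert space with inner product $\int_{\R^N}(1+|x|^2)f\bar g\,dx$, weak convergence can be tested against $g$ in the dense subset $C_c^\infty$; combining boundedness of $\phi_n$ with density of $C_c^\infty$ in $\F H^1$, I may further approximate $\psi$ by smooth compactly supported functions. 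The problem then reduces to showing
\[
I_n := \int_{\R^N} F(x)\, e^{i\tau_n|x|^2 + ix\cdot\xi_n}\,dx \to 0
\]
for every $F \in C_c^\infty(\R^N)$, which I will apply with $F = (1+|x|^2)\psi\bar g$.

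I would argue by subsequences. If $\{\tau_n\}$ stays bounded, extract $\tau_n\to\tau$; since $|\tau_n|+|\xi_n|\to\I$, this forces $|\xi_n|\to\I$, and dominated convergence (to replace $Fe^{i\tau_n|x|^2}$ by $Fe^{i\tau|x|^2}$ in $L^1$) combined with the Riemann--Lebesgue lemma gives $I_n\to 0$. If $|\tau_n|\to\I$, I complete the square to rewrite $I_n = e^{-i|\xi_n|^2/(4\tau_n)}\int F(y-\xi_n/(2\tau_n))\,e^{i\tau_n|y|^2}\,dy$ and then apply Parseval together with the explicit Fresnel identity $|\widehat{e^{i\tau|y|^2}}(\eta)| = c_N|\tau|^{-N/2}$ to obtain the bound $|I_n|\le C|\tau_n|^{-N/2}\|\widehat F\|_{L^1}\to 0$.

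For the converse, I will argue by contradiction: suppose $|\tau_n|+|\xi_n|$ does not diverge, and pass to a subsequence along which $\tau_n\to\tau\ge 0$ and $\xi_n\to\xi$. Then the multipliers $m_n := e^{i\tau_n|x|^2}e^{ix\cdot\xi_n}$ converge pointwise to $m:=e^{i\tau|x|^2}e^{ix\cdot\xi}$ and satisfy $|m_n|\equiv 1$. For any $g\in\F H^1$, dominated convergence applied separately to $\bar m_n g$ and to $x\bar m_n g = \bar m_n(xg)$ yields $\bar m_n g\to\bar m g$ strongly in $\F H^1$. Writing
\[
\langle m_n z_n, g\rangle_{\F H^1} = \langle z_n, \bar m_n g\rangle_{\F H^1} = \langle z_n, \bar m g\rangle_{\F H^1} + \langle z_n, \bar m_n g - \bar m g\rangle_{\F H^1},
\]
the first term vanishes by $z_n\rightharpoonup 0$ in $\F H^1$ and the second by the strong convergence just noted combined with boundedness of $\{z_n\}$. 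Hence $m_n z_n\rightharpoonup 0$ in $\F H^1$, contradicting $m_n z_n\rightharpoonup\psi\neq 0$.

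The main obstacle will be the $|\tau_n|\to\I$ case of Part~1: the straightforward modulus bound after completing the square and rescaling gives only $O(1)$, so the $|\tau_n|^{-N/2}$ decay has to be harvested from the explicit Fresnel formula for $\widehat{e^{i\tau|y|^2}}$, and this is precisely where the smoothness reduction $\psi\in C_c^\infty$ (ensuring $\widehat F\in L^1$) is needed. Everything else reduces to density arguments and standard Hilbert-space duality.
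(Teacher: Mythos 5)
Your proof is correct, but it takes a genuinely different route from the paper. The paper disposes of the lemma in two lines: it conjugates by the Fourier transform, noting that $\F$ is an isomorphism from $\F H^1$ onto $H^1$ which carries multiplication by $e^{i\tau_n|x|^2}e^{ix\cdot\xi_n}$ into the operator $f\mapsto e^{-i\tau_n\Delta}f(\cdot+\xi_n)$, and then invokes the known orthogonality lemma for $H^1$-bounded sequences under time translations of the free flow and space translations (Lemma~5.3 of \cite{FXC}). You instead prove both directions from scratch on the physical side: the forward implication via density, a subsequence dichotomy, Riemann--Lebesgue when $\tau_n$ stays bounded, and the Fresnel/Parseval bound $|I_n|\le C|\tau_n|^{-N/2}\|\widehat F\|_{L^1}$ when $|\tau_n|\to\I$; the converse via strong convergence of the adjoint multipliers $\bar m_n g\to\bar m g$ in $\F H^1$ tested against the weakly null $z_n$. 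All the steps check out: the isometry of the unimodular multiplier justifies the double density reduction, completing the square correctly shifts the amplitude so that the stationary point's location is irrelevant, and the duality identity $\langle m_nz_n,g\rangle_{\F H^1}=\langle z_n,\bar m_ng\rangle_{\F H^1}$ is valid for the weighted inner product since $|m_n|\equiv1$. What your approach buys is self-containedness and transparency about where the decay comes from; what the paper's buys is brevity and a clean structural parallel with the $H^1$ profile-decomposition literature, at the price of importing an external lemma whose proof is essentially the Fourier conjugate of the argument you wrote out.
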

\begin{proof}
By the Fourier transform, it holds that, as $n\to\I$,
\[
	e^{i\tau_n |x|^2} e^{ix\cdot\xi_n} \psi \rightharpoonup 0 \IN \F H^1
\]
is equivalent to
\[
	e^{-i\tau_n \Delta} (\F\psi)(\cdot + \xi_n) \rightharpoonup 0 \IN H^1.
\]
The lemma now follows from Lemma 5.3 of \cite{FXC}.
\end{proof}
The next lemma is our first decomposition result, which is similar to
those for $H^1$-bounded sequences obtained in \cite{DHR,FXC}.
This decomposition involves quadratic oscillation.
\begin{lemma}\label{lem:pd3}
Under assumption of Proposition \ref{prop:pd},
there exist a subsequence of $\{\phi_n\}$,
which is denoted again by $\{\phi_n\}$,
and sequences $\{\psi^j\}_j\subset \F H^1$, $\{W_n^j\}_{n,j}\subset \F H^1$,
$\{t_n^j\}_{n,j}\subset (-\I,0)$, $\{\overline{t}^j\}_j \subset [-\I,0]$,
and $\{\xi_n^j\}_{n,j} \subset \R^N$ such that for every $l\ge1$
\begin{equation}\label{eq:decompose1}
	\phi_n = \sum_{j=1}^l e^{i\xi_n^j\cdot x} e^{it_n^j |x|^2 } \psi^j + W_n^l,
\end{equation}
in $\F H^1$ and, as $n\to\I$,
\begin{equation}\label{eq:decompose2}
	t_n^j \to \overline{t}^j,
\end{equation}
\begin{equation}\label{eq:decompose3}
	\norm{\phi_n}_{\F \dot{H}^s}^2 -\sum_{j=1}^l \norm{\psi^j}_{\F \dot{H}^s}^2
	- \norm{W^j_n}_{\F \dot{H}^s}^2 \to 0
\end{equation}
for all $s\in [0,1]$.
Furthermore, there exists $J\in \N \cup \{\I\}$ such that 
$\psi^j\equiv 0$ for all $j\ge1$ if $J=0$,
$\psi^j\not\equiv 0$ for all $j\ge1$ if $J=\I$, and
\begin{equation}\label{eq:decompose4}
	\psi^j\not\equiv0 \text{ for } j\le J , \qquad 
	\psi^j\equiv0 \text{ for }j> J
\end{equation}
if $J\in[1,\I)$.
For all $l\ge1$ and $1\le i,j\le J$, $i\neq j$,
\begin{equation}\label{eq:decompose5}
	\lim_{n\to\I} |t_n^i-t_n^i| + |x_n^i-x_n^j| =\I.
\end{equation}
In addition,
\begin{equation}\label{eq:decompose6}
	\limsup_{n\to\I} \norm{e^{it \Delta}W_n^l}_{L^\rho((0,\I),L^{\gamma})}\to0
\end{equation}
as $l\to\I$.
\end{lemma}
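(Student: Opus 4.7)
The plan is to iterate Lemma \ref{lem:pd1}, starting from $W_n^0:=\phi_n$, and peel off successive profiles from the remainders $W_n^l$. At each step, after passing to a subsequence (relabelled as $\{\phi_n\}$ at the end via a standard Cantor diagonal), set
\[
A_l := \lim_{n\to\I}\sup_{t>0} |t|^{\delta(\gamma)}\Lebn{e^{it\Delta}W_n^l}{\gamma}.
\]
If $A_l=0$, we stop and set $J:=l$, $\psi^{j}\equiv 0$ for $j>l$; otherwise Lemma \ref{lem:pd1} applied to $\{W_n^l\}$ produces a nontrivial $\psi^{l+1}\in\F H^1$ together with $\tau_n^{l+1}>0$, $\eta_n^{l+1}\in\R^N$ and a new remainder. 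To match the sign convention of \eqref{eq:decompose1}, set $t_n^{l+1}:=-\tau_n^{l+1}\in(-\I,0)$ and $\xi_n^{l+1}:=-\eta_n^{l+1}$, so that $W_n^{l+1}=W_n^l-e^{i\xi_n^{l+1}\cdot x}e^{it_n^{l+1}|x|^2}\psi^{l+1}$ and the weak-limit statement of Lemma \ref{lem:pd1} reads $e^{-i\xi_n^{l+1}\cdot x}e^{-it_n^{l+1}|x|^2}W_n^{l+1}\rightharpoonup 0$ in $\F H^1$. A further diagonal extraction gives $t_n^j\to\overline{t}^j\in[-\I,0]$ as required by \eqref{eq:decompose2}.

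The Pythagorean identity \eqref{eq:decompose3} follows by telescoping \eqref{eq:lem:Pythagorean}. Taking $s=0$ and $s=1$ and using $\norm{W_n^l}_{\F H^1}^2\ge 0$, we obtain $\sum_{j\ge 1}\norm{\psi^j}_{\F H^1}^2\le\limsup_n\norm{\phi_n}_{\F H^1}^2<\I$, hence $\norm{\psi^j}_{\F H^1}\to 0$. Combined with the quantitative lower bound \eqref{eq:lem:lowerbound} for $\psi^{l+1}$ and the trivial bound $A_l\le C\sup_n\norm{W_n^l}_{\F H^1}\le C\limsup_n\norm{\phi_n}_{\F H^1}$ (from Lemma \ref{lem:decay}), a short bookkeeping then forces $A_l\to 0$ as $l\to\I$ in the non-terminating case. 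The Strichartz vanishing \eqref{eq:decompose6} then follows by splitting the $L^\rho_t$ integral at a threshold $T$: on $|t|>T$ we use the pointwise bound $\Lebn{e^{it\Delta}W_n^l}{\gamma}\le A_l|t|^{-\delta(\gamma)}$ (asymptotically in $n$), and on $|t|<T$ we use the admissible Strichartz bound at the endpoint $(2/\delta(\gamma),\gamma)$ via H\"older in time. Since $\rho\delta(\gamma)\in(1,2)$ by \eqref{eq:nonadm}--\eqref{eq:acceptable}, optimising $T$ gives $\norm{e^{it\Delta}W_n^l}_{L^\rho((0,\I),L^\gamma)}\le CA_l^{\theta}$ for some $\theta>0$, which tends to $0$ with $A_l$.

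The delicate point, and the main obstacle, is the pairwise orthogonality \eqref{eq:decompose5}, which I will prove by strong induction on $j$. Suppose it holds for all pairs in $\{1,\dots,j\}$ but fails at step $j+1$ for some $k\le j$; passing to a further subsequence, $\xi_n^{j+1}-\xi_n^k\to\xi^*\in\R^N$ and $t_n^{j+1}-t_n^k\to t^*\in\R$. Writing
\[
W_n^j = W_n^k - \sum_{m=k+1}^{j} e^{i\xi_n^m\cdot x}e^{it_n^m|x|^2}\psi^m
\]
and multiplying through by $e^{-i\xi_n^k\cdot x}e^{-it_n^k|x|^2}$, the $W_n^k$-piece is weakly null in $\F H^1$ by the construction of Lemma \ref{lem:pd1}, while each remaining term $e^{i(\xi_n^m-\xi_n^k)\cdot x}e^{i(t_n^m-t_n^k)|x|^2}\psi^m$ is weakly null by the induction hypothesis and Lemma \ref{lem:pd2}. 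Consequently $e^{-i\xi_n^k\cdot x}e^{-it_n^k|x|^2}W_n^j\rightharpoonup 0$ in $\F H^1$. Composing with the bounded-parameter factor $e^{-i(\xi_n^{j+1}-\xi_n^k)\cdot x}e^{-i(t_n^{j+1}-t_n^k)|x|^2}$, which converges in the strong operator topology on $\F H^1$ (by dominated convergence together with $\F H^1$-unitarity of multiplication by unimodular phases), yields $e^{-i\xi_n^{j+1}\cdot x}e^{-it_n^{j+1}|x|^2}W_n^j\rightharpoonup 0$ in $\F H^1$; but this weak limit equals $\psi^{j+1}$ by the definition of the $(j+1)$-st extraction, contradicting $\psi^{j+1}\not\equiv 0$. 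Carrying out this weak/strong operator interplay inside the weighted space $\F H^1$, rather than in $H^1$ or $\dot H^1$, is the chief technical point.
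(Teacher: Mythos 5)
Your proposal is correct and follows essentially the same route as the paper: iterate Lemma \ref{lem:pd1} from $W_n^0=\phi_n$, telescope the Pythagorean expansion to force $A_l\to0$ via the lower bound \eqref{eq:lem:lowerbound}, split the $L^\rho_t$ integral at an optimized threshold to get \eqref{eq:decompose6}, and prove \eqref{eq:decompose5} by induction using the dichotomy of Lemma \ref{lem:pd2}. The only cosmetic difference is in the orthogonality step, where you argue the contrapositive directly (bounded relative modulations converge in the strong operator topology on $\F H^1$, so they preserve weak nullity) rather than invoking the converse half of Lemma \ref{lem:pd2} applied to $W_n^{l-k_0}$ as the paper does; the two arguments are logically interchangeable.
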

\begin{remark}
As shown in \cite{DHR,FXC},
a $H^1$-bounded sequence $\{\varphi_n\}_n$ is decomposed, up to a subsequence, into
a form
\[
	\varphi_n = \sum_{j=1}^l e^{-it_n^j\Delta} \psi^j(x-x_n^j) + W_n^j.
\]
Our decomposition \eqref{eq:decompose1} is similar to 
the Fourier transform of this in such a sense that the both give
profiles of the same form.
However, it is not cleat whether the estimate \eqref{eq:decompose6} follows 
from this correspondence.
\end{remark}
\begin{proof}
Let
\[
	a = \limsup_{n\to\I} \norm{\phi_n}_{\F H^1}>0.
\]
Let $W_n^0=\phi_n$. We construct by induction on $l$ the various sequences so
that for every $1\le j \le l$,
\begin{equation}\label{eq:prop:property1}
	W^{j-1}_n = e^{i\xi_n^j \cdot x}e^{i t_n^j{|x|^2} } \psi^j + W^j_n
\end{equation}
for $n\ge1$ and
\begin{equation}\label{eq:prop:property2}
	t_n^j \to \overline{t}^j \in [-\I,0],
\end{equation}
\begin{equation}\label{eq:prop:property3}
	e^{-i\xi_n^j \cdot x}e^{-i t_n^j{|x|^2}} W_n^{j-1} \rightharpoonup
	\psi^j,\quad
	e^{-i\xi_n^j \cdot x}e^{-i t_n^j{|x|^2}} W_n^{j} \rightharpoonup 0
\end{equation}
in $\F H^1$ as $n\to\I$,
\begin{equation}\label{eq:prop:property4}
	\norm{W^{j-1}_n}_{\F \dot{H}^s}^2 - \norm{\psi^j}_{\F \dot{H}^s}^2
	-\norm{W_n^j}_{\F \dot{H}^s}^2 \to 0
\end{equation}
as $n\to \I$ for all $s\in[0,1]$, and
\begin{equation}\label{eq:prop:property5}
	\norm{|t|^{\delta(\gamma)} \norm{e^{it\Delta}W^{j-1}_n}_{L^\gamma}}_{L^\I((0,\I))}
	\to A_j,
\end{equation}
\begin{equation}\label{eq:prop:property6}
	\norm{\psi^j}_{\F H^1} \ge \nu(a)
	A_j^{\frac{\gamma}{(\gamma-2)(1-\delta(\gamma))}}.
\end{equation}

For $l=1$, we set
\[
	A_1 = \limsup_{n\to\I} \norm{ |t|^{\delta(\gamma)}
\norm{e^{it\Delta}\phi_n}_{L^\gamma}}_{L^\I((0,\I))}.
\]
We can extract a subsequence so that 
\[
\lim_{n\to\I} \norm{ |t|^{\delta(\gamma)}
\norm{e^{it\Delta}\phi_n}_{L^\gamma}}_{L^\I((0,\I))}=A_1.
\]
We apply Lemma \ref{lem:pd1} with $v_n= W^0_n := \phi_n$.
Then, we obtain $\psi^1$, $\{ t_n^1 \}_n$, $\{ \xi_n \}_n$, and $\{W_n^1\} \subset 
\F H^1$ which ensure \eqref{eq:prop:property1},
\eqref{eq:prop:property3},
\eqref{eq:prop:property4},
\eqref{eq:prop:property5}, and
\eqref{eq:prop:property6}.
Moreover, by extracting a subsequence if necessary, we claim that
$t_n^1\to \overline{t}^1\in [-\I,0]$.
This is \eqref{eq:prop:property2}.

Fix $l\ge2$ and suppose that $\{t_n^j\}_n$, $\overline{t}^j$, $\{\xi_n^j\}_n$,
$\psi^j$, and $\{W_n^j\}_n$ are successfully constructed for all $j\le l-1$.
Set 
\[
	A_l = \limsup_{n\to\I} \norm{ |t|^{\delta(\gamma)}
\norm{e^{it\Delta}W_n^{l-1}}_{L^\gamma}}_{L^\I((0,\I))}.
\]
By extracting a subsequence,
we can replace $\limsup$ by $\lim$, which implies \eqref{eq:prop:property5} holds.
We now apply Lemma \ref{lem:pd1} with $v_n = W^{l-1}_n$.
Then, as in the case $l=1$, we obtain $\{t_n^l\}_n$,
$\overline{t}^l$, $\{\xi_n^l\}$, $\psi^l$, and $\{W^l_{n}\}_n$ so that 
\eqref{eq:prop:property1}, \eqref{eq:prop:property2}, 
\eqref{eq:prop:property3},
and \eqref{eq:prop:property4} hold with $j=l$.
Summing up the equation \eqref{eq:prop:property4} with respect to $j$ yields
\[
	\sum_{j=1}^l \norm{\psi^j}_{\F H^1}^2 \le a^2,
\]
showing in particular $\norm{\psi^l}_{\F H^1}\le a$.
By \eqref{eq:lem:lowerbound}, we have
 \eqref{eq:prop:property6} for $j=l$.

We shall check the sequences
we have constructed satisfy the desired properties.
The properties \eqref{eq:decompose1}, 
\eqref{eq:decompose2}, and  \eqref{eq:decompose3} follow
immediately from \eqref{eq:prop:property1},
\eqref{eq:prop:property2}, 
\eqref{eq:prop:property4}, and $W^0_n = \phi_n$.
The estimates \eqref{eq:prop:property4} and
\eqref{eq:prop:property6} infer that
\[
	\sum_{j=1}^\I A_j^{\frac{2\gamma}{(\gamma-2)(1-\delta(\gamma))}}	
	\le C(a) \sum_{j=1}^\I \norm{\psi^j}_{\F H^1}^2 
	<\I,
\]
showing that $A_l \to 0$ as $l\to \I$. Now,
it holds for any $T>0$ that
\begin{align*}
	\norm{e^{it\Delta}W_n^l}_{L^\rho((0,\I),L^\gamma)}
	\le{}& T^{\frac1\rho-\frac1\eta}\norm{e^{it\Delta}W_n^l}_{L^\eta((0,T),L^\gamma)}\\
	&{}+ T^{\frac1\rho-\delta(\gamma)} \norm{|t|^{\delta(\gamma)} \norm{e^{it\Delta}W_n^l}_{L^\gamma}}_{L^\I((T,\I))},
\end{align*}
where $\eta=2/\delta(\gamma)$.
Minimizing the right hand side with respect to $T$ and applying Strichartz' estimate,
we obtain
\[
	\norm{e^{it\Delta}W_n^l}_{L^\rho((0,\I),L^\gamma)}
	\le C \norm{W_n^l}_{L^2}^{2-\frac2{\rho\delta(\gamma)}}
	\norm{|t|^{\delta(\gamma)} \norm{e^{it\Delta}W_n^l}_{L^\gamma}}_{L^\I((0,\I))}^{
	\frac2{\rho\delta(\gamma)}-1}.
\]
Thus, $A_l\to0$ as $l\to\I$ implies
\[
	\lim_{n\to\I }\norm{e^{it\Delta}W_n^{l}}_{L^\rho((0,\I),L^\gamma)}
	\to 0
\]
as $l\to\I$, which is \eqref{eq:decompose6}.
Let us next show the existence of $J\in[0,\I]$.
If $\psi^{j_0}=0$ for some $j_0\ge1$ in the above procedure, then $A_{j_0}=0$ holds by 
\eqref{eq:prop:property6}.
All the properties (except for \eqref{eq:decompose5}) are 
then satisfied with the choice $\psi^l=0$ and $W^l_n=W^{j_0}_n$ for all $l\ge {j_0}$.
Hence, we let $J=j_0-1$ in this case.
If such $j_0$ does not exist then we let $J=\I$.
Let us proceed to the proof of \eqref{eq:decompose5}.
Assume $J\ge2$ and
take $2\le l\le J$. 
We shall show by induction on $k$ that 
\begin{equation}\label{eq:prop:induction}
	|t_n^l-t_n^{l-k}| + |\xi^l_n-\xi^{l-k}_n| \to \I
\end{equation}
as $n\to\I$ for all $1\le k \le l-1$.
First consider the case $k=1$.
By \eqref{eq:prop:property3} with $j=l-1,l$, it follows that
\[
	e^{-i\xi_n^{l-1} \cdot x}e^{-i t_n^{l-1}{|x|^2}} W_n^{l-1} \rightharpoonup 0,
	\quad e^{-i\xi_n^l \cdot x}e^{-i t_n^l{|x|^2}} W_n^{l-1} \rightharpoonup
	\psi^l
\]
in $\F H^1$ as $n\to \I$. 
Apply Lemma \ref{lem:pd2} with $z_n=e^{-i\xi_n^{l-1} \cdot x}e^{-i t_n^{l-1}{|x|^2}} W_n^{l-1}$
to yield
\[
	|t_n^l-t_n^{l-1}| + |\xi^l_n-\xi^{l-1}_n| \to \I
\]
as $n\to\I$.
The proof is done when $J=2$. Hence we let $J\ge3$ and $l\ge3$.
Suppose that \eqref{eq:prop:induction} is true for all $1 \le k\le k_0-1$,
where $k_0\in[2,l-1]$.
Then, by \eqref{eq:prop:property1},
\[
	W^{l-1}_n - W^{l-k_0}_n = \sum_{j=l-k_0+1}^{l-1} e^{i\xi_n^{j} \cdot x}e^{it_n^{j}{|x|^2}}
	\psi^j.
\]
This implies
\begin{multline*}
	e^{-i\xi_n^{l} \cdot x}e^{-i t_n^{l}{|x|^2}} W^{l-k_0}_n  \\
	=e^{-i\xi_n^{l} \cdot x}e^{-i t_n^{l}{|x|^2}} W^{l-1}_n -  \sum_{j=l-k_0+1}^{l-1} e^{i(\xi_n^{j} -\xi_n^{l})\cdot x}e^{i(t_n^{j}-t_n^{l}){|x|^2}}
	\psi^j.
\end{multline*}
One then sees from \eqref{eq:prop:property3} with $j=l$
that $
	e^{-i\xi_n^{l} \cdot x}e^{-i t_n^{l}{|x|^2}} W^{l-1}_n \rightharpoonup \psi^l
$
in $\F H^1$ as $n\to\I$.
Moreover, it follows
from the former part of Lemma \ref{lem:pd2} and assumption of induction
that
\[
	\sum_{j=l-k_0+1}^{l-1} e^{i(\xi_n^{j} -\xi_n^{l})\cdot x}e^{i(t_n^{j}-t_n^{l}){|x|^2}}
	\psi^j \rightharpoonup 0
\]
in $\F H^1$ as $n\to\I$.
Hence, as $n\to\I$,
\[
	e^{-i\xi_n^{l} \cdot x}e^{-i t_n^{l}{|x|^2}} W^{l-k_0}_n \rightharpoonup \psi^l.
\]
On the other hand, by \eqref{eq:prop:property3} with $j=l-k_0$,
we have
\[
	e^{-i\xi_n^{l-k_0} \cdot x}e^{-i t_n^{l-k_0}{|x|^2}} W^{l-k_0}_n \rightharpoonup 0
\]
in $\F H^1$ as $n\to\I$.
The latter part of Lemma \ref{lem:pd2} shows
$|t_n^l-t_n^{l-k_0}| + |\xi^l_n-\xi^{l-k_0}_n| \to \I$
as $n\to\I$, which is \eqref{eq:prop:induction} with $l=k_0$.
Hence, \eqref{eq:prop:induction} is true for all $1\le k\le l-1$ and $2\le l  \le J$,
which completes the proof of \eqref{eq:decompose5}.
\end{proof}
We finally prove Proposition \ref{prop:pd}.
To do so, it is essential to show that we can let $\overline{t}^j>-\I$ for all $j$
in Lemma \ref{lem:pd3}.
Heart of matter is that if $t_n^j\to-\I$ as $n\to\I$ then $e^{it_n^j|x|^2}\psi^j$
becomes a remainder term by means of Proposition \ref{prop:ODS}.
\begin{proof}[Proof of Proposition \ref{prop:pd}]
We first apply Lemma \ref{lem:pd3} to obtain
\[
	\phi_n = \sum_{j=1}^l e^{i\xi_n^j\cdot x} e^{it_n^j |x|^2 } \varphi^j + w_n^l.
\]
Set $L=\{l\in [1,J]\ | \ \overline{t}^{l} >-\I \}$.
If $L=\emptyset$ then the result follows with $\psi^j=0$ for $j\ge1$
and $W^{l}_n=\phi_n=\sum_{j=1}^l e^{i\xi_n^j\cdot x} e^{it_n^j |x|^2 } \varphi^j + w_n^l$
for $l\ge1$.
Indeed, \eqref{eq:decompose1b} and \eqref{eq:decompose3b}
are obvious, $J=0$, and
we see from Proposition \ref{prop:ODS} that
\[
	\limsup_{n\to\I} \norm{e^{it \Delta}W_n^l}_{L^\rho((0,\I),L^{\gamma})}
	= \limsup_{n\to\I} \norm{e^{it \Delta}w_n^l}_{L^\rho((0,\I),L^{\gamma})}
	\to 0 
\]
as $l\to \I$, which is \eqref{eq:decompose6b}.

Consider the case $L\neq \emptyset$.
We may first assume $L$ is not a finite set.
We number the elements of $L$ as $L=\{j_1,j_2,j_3,\dots\} $ 
in such a way that $j_k<j_l$ as long as $k<l$.
For each $ l\ge1$, we define
\[
	\psi^l = e^{i\overline{t}^{j_l} |x|^2 } \varphi^{j_l}
\]
and
\[
	W^l_n = \sum_{k=1}^l e^{i\xi_n^{j_k}\cdot x} (e^{it_n^{j_k} |x|^2 } \varphi^{j_k}-\psi^k) 
	+ \sum_{j\le j_l, j\not\in L}e^{i\xi_n^j\cdot x} e^{it_n^j |x|^2 } \varphi^j + w_n^{j_l}.
\]
Then, \eqref{eq:decompose1b} holds.
Remark that $e^{i\xi_n^{j_l}\cdot x} (e^{it_n^{j_l} |x|^2 } \varphi^j-\psi^l)\to 0$
strongly in $\F H^1$ as $n\to\I$.
Therefore, by Lemma \ref{lem:decay}, we have
\[
	\limsup_{n\to\I} \norm{e^{it \Delta}W_n^l}_{L^\rho((0,\I),L^{\gamma})}
	= \limsup_{n\to\I} \norm{e^{it \Delta}w_n^{j_l}}_{L^\rho((0,\I),L^{\gamma})}
	\to 0 
\]
as $l\to\I$. Thus, \eqref{eq:decompose6b} holds.
Further, as in the proof of Lemma \ref{lem:pd3}, one sees that
\[
	\norm{W^l_n}_{\F \dot{H}^s}^2
	- \sum_{j\le j_l, j\not\in L} \norm{\varphi^j}_{\F \dot{H}^s}^2
	- \norm{w^{j_l}_n}_{\F \dot{H}^s}^2 \to 0
\]
as $n\to\I$ for any $s\in[0,1]$. Therefore,
\begin{align*}
	&{}\norm{\phi_n}_{\F \dot{H}^s}^2 -\sum_{j=1}^l \norm{\psi^j}_{\F \dot{H}^s}^2
	- \norm{W^l_n}_{\F \dot{H}^s}^2 \\
	={}& \(\norm{\phi_n}_{\F \dot{H}^s}^2 - \sum_{j=1}^{j_l} \norm{\varphi^j}_{\F \dot{H}^s}^2
	- \norm{w^{j_l}_n}_{\F \dot{H}^s}^2\) \\
	&{}- \(\norm{W^l_n}_{\F \dot{H}^s}^2
	- \sum_{j\le j_l, j\not\in L} \norm{\varphi^j}_{\F \dot{H}^s}^2
	- \norm{w^{j_l}_n}_{\F \dot{H}^s}^2 \)\to0
\end{align*}
as $l\to\I$, showing \eqref{eq:decompose3b}.
The property
\eqref{eq:decompose5b} is an immediate consequence of \eqref{eq:decompose5}
and the fact that $\overline{t}_j>-\I$ for $j\in L$.

If $L$ is a finite set then it suffices to define
 $\psi^l$ and $W^l_n$ for $l> \sharp L$ as follows;
$\psi^l=0$ and $W^l_n =w^{\max L}_n$. Then, \eqref{eq:decompose4b} is trivial.
Remark that 
\[
	\limsup_{n\to\I }\norm{e^{it \Delta} w^{\max L}_n}_{L^\rho((0,\I),L^{\gamma})}
	= \limsup_{n\to\I }\norm{e^{it \Delta} w^{l}_n}_{L^\rho((0,\I),L^{\gamma})}
	\to 0
\]
as $l\to\I$ by Proposition \ref{prop:ODS}, that is,
$\limsup_{n\to\I }\norm{e^{it \Delta} w^{\max L}_n}_{L^\rho((0,\I),L^{\gamma})}
	= 0$. Hence \eqref{eq:decompose6b} holds.
\end{proof}
\section{Proof of Theorem \ref{thm:main1}}
Let us define
\[
	\ell_{c}
	:= \inf\{ \ell (f) \ |\ f \in \F H^1 \setminus S_+ \}.
\]
By Corollary \ref{cor:sds},
there exists a constant $\eta_1>0$ such that $\ell_c \ge \eta_1$.
On the other hand, since $Q \in \F H^1 \setminus S_+$, we have $\ell_c \le 
\ell (Q)<\I$, where $Q$ is a ground state.
Thus, $\ell_c \in (0,\I)$.

The main step of the proof is the following.
\begin{lemma}\label{lem:oneprofile}
Let $\{u_{0,n}\}_n \subset \F H^1$ be a sequence satisfying
$u_{0,n} \not\in S_+$ and $\ell(u_{0,n})\le \ell_c + \frac1n$.
Assume $\Lebn{u_{0,n}}2=1$.
Then, there exist a subsequence of $\{u_{0,n}\}_n$,
which is denoted again by $\{u_{0,n}\}_n$,
a function $\psi \in  \F H^1$ with $\Lebn{\psi}2=1$ and $\ell(\psi)=\ell_c$,
and sequences $\{W_n\}_{n}\subset \F H^1$
and $\{\xi_n\}_{n} \subset \R^N$ such that
\begin{equation}\label{eq:cc1}
	u_{0,n} = e^{i\xi_n\cdot x} \psi + W_n
\end{equation}
and
\begin{equation}\label{eq:cc2}
	\norm{W_n}_{\F H^1} \to 0
\end{equation}
as $n\to\I$.
\end{lemma}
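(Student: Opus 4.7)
The plan is to apply the profile decomposition of Proposition~\ref{prop:pd} to $\{u_{0,n}\}_n$ and then show by a contradiction argument that exactly one profile is non-trivial and the remainder tends to zero strongly in $\F H^1$. Set $\alpha = \frac{N+2}{2}-\frac{2}{p-1}$ and $\beta = \frac{2}{p-1}-\frac{N}{2}$; both are strictly positive for $p\in(\pst,1+\frac{4}{N})$, with $\alpha+\beta=1$, and $\ell(f)^2 = \Lebn{f}{2}^{2\alpha}\Lebn{xf}{2}^{2\beta}$. The map $F(A,B):=A^\alpha B^\beta$ is concave and $1$-homogeneous on $(0,\I)^2$, hence superadditive. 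Combined with the Pythagorean relations \eqref{eq:decompose3b} at $s=0$ and $s=1$, the decomposition $u_{0,n} = \sum_{j=1}^{l}e^{i\xi_n^j\cdot x}\psi^j + W_n^l$ yields, after passing to a subsequence so that $\Lebn{xu_{0,n}}{2}$ converges,
\[
 \ell_c^2 \;=\; \lim_{n\to\I}\ell(u_{0,n})^2 \;\ge\; \sum_{j=1}^{J}\ell(\psi^j)^2 + \lim_{n\to\I}\ell(W_n^l)^2
\]
for every $l\ge J$ (with the natural convention when $J=\I$). In particular $\ell(\psi^j)\le \ell_c$ for all $j$.

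I next rule out the case $\ell(\psi^j)<\ell_c$ for every $j$. Under this hypothesis, the definition of $\ell_c$ together with Proposition~\ref{prop:ScatterCond} forces each $\psi^j\in S_+$, so the nonlinear evolution $U^j(t)$ of \eqref{eq:NLS} with $U^j(0)=\psi^j$ satisfies $\norm{U^j}_{L^\rho((0,\I),L^\gamma)}<\I$. Using the Galilean invariance of \eqref{eq:NLS}, the function $V_n^j(t,x):=U^j(t,x-2t\xi_n^j)\,e^{i(\xi_n^j\cdot x-|\xi_n^j|^2 t)}$ is the exact $\F H^1$-solution with initial value $e^{i\xi_n^j\cdot x}\psi^j$. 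I then define the approximate solution
\[
 \widetilde{u}_n^l(t,x) := \sum_{j=1}^{l} V_n^j(t,x) + e^{it\Delta}W_n^l(x),
\]
which matches $u_{0,n}$ at $t=0$. By the asymptotic orthogonality \eqref{eq:decompose5b} the cross terms between distinct $V_n^j$ become negligible, so together with \eqref{eq:decompose6b} one obtains a bound $\limsup_{n\to\I}\norm{\widetilde{u}_n^l}_{L^\rho((0,\I),L^\gamma)}\le A$ uniform in $l$, while the nonlinear error
\[
 e_n^l \;:=\; i\d_t\widetilde{u}_n^l + \Delta\widetilde{u}_n^l + |\widetilde{u}_n^l|^{p-1}\widetilde{u}_n^l \;=\; |\widetilde{u}_n^l|^{p-1}\widetilde{u}_n^l - \sum_{j=1}^{l}|V_n^j|^{p-1}V_n^j
\]
tends to zero in $L^{\widetilde{\rho}'}((0,\I),L^{\widetilde{\gamma}'})$ after first sending $n\to\I$ and then $l\to\I$. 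Proposition~\ref{prop:lpt} then gives $\norm{u_n}_{L^\rho((0,\I),L^\gamma)}<\I$ for $n$ large, hence $u_{0,n}\in S_+$ by Proposition~\ref{prop:ScatterCond}, contradicting the hypothesis.

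Consequently some $\ell(\psi^{j_0})=\ell_c$; after relabelling $j_0=1$, the superadditivity bound forces $\psi^j\equiv 0$ for $j\ge 2$ and $\ell(W_n^l)\to 0$. Setting $\psi:=\psi^1$, $\xi_n:=\xi_n^1$, $W_n:=W_n^1$ gives \eqref{eq:cc1} and $\ell(W_n)\to 0$. To upgrade this to \eqref{eq:cc2} I exploit the normalization $\Lebn{u_{0,n}}{2}=1$: combined with $\ell(u_{0,n})\to\ell_c$ this forces $\Lebn{xu_{0,n}}{2}^2\to\ell_c^{2/\beta}$. Writing $a=\Lebn{\psi}{2}^2\in[0,1]$ and $b=\Lebn{x\psi}{2}^2\in[0,\ell_c^{2/\beta}]$, the identity $a^\alpha b^\beta=\ell(\psi)^2=\ell_c^2$ together with these upper bounds forces $a=1$ and $b=\ell_c^{2/\beta}$. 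The Pythagorean identities at $s=0$ and $s=1$ then give $\Lebn{W_n}{2}\to 0$ and $\Lebn{xW_n}{2}\to 0$, i.e.\ $\norm{W_n}_{\F H^1}\to 0$.

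I expect the main obstacle to lie in the orthogonal decoupling of the contradiction step: since the decomposition in Proposition~\ref{prop:pd} involves only Fourier translates $e^{i\xi_n^j\cdot x}$ (no space-time translation or scaling) and the pair $(\rho,\gamma)$ is non-admissible, controlling $\norm{\widetilde{u}_n^l}_{L^\rho L^\gamma}$ and the nonlinear error uniformly in $l$ for the Galilean-boosted profiles $V_n^j$ will require careful exchange of limits in $n$ and $l$ based on the non-admissible Strichartz estimate \eqref{eq:Str}.
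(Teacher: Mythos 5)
Your proposal is correct and follows essentially the same route as the paper: profile decomposition, Pythagorean/superadditivity bounds giving $\ell(\psi^j)\le\ell_c$, a contradiction argument that builds an approximate solution out of Galilean-boosted nonlinear profiles of the scattering data $\psi^j$ and invokes the long-time perturbation theory, and finally the $s=0,1$ Pythagorean identities to isolate a single profile with $\|\psi\|_{L^2}=1$ and kill the remainder. The one step you flag as delicate --- the uniform-in-$l$ bound on $\|\widetilde u_n^l\|_{L^\rho((0,\I),L^\gamma)}$, which orthogonality alone does not give --- is exactly where the paper works hardest, closing it by splitting off the tail $j>j_1$ with $\sum_{j>j_1}\|\psi^j\|_{\F H^1}^2\le\eps^2$ and running a small-data bootstrap with the non-admissible Strichartz estimate, as you anticipate.
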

\begin{proof}
It holds by assumption $\Lebn{u_{0,n}}2=1$ that
\begin{equation}\label{eq:critpf0}
	\limsup_{n\to\I} \Lebn{xu_{0,n}}2^{\frac2{p-1} - \frac{N}2} \le
	\ell_{c}.
\end{equation}
Hence, $u_{0,n}$ is uniformly bounded in $\F H^1$.
By profile decomposition (Proposition \ref{prop:pd}), there exist 
a subsequence of $\{u_{0,n}\}$, which is denoted again by $\{u_{0,n}\}$,
and sequences $\{\psi^j\}_j\subset \F H^1$, $\{W_n^j\}_{n,j}\subset \F H^1$,
and $\{\xi_n^j\}_{n,j} \subset \R^N$ such that for every $l\ge1$
\begin{equation}\label{eq:critpf1}
	u_{0,n} = \sum_{j=1}^l e^{i\xi_n^j\cdot x} \psi^j + W_n^l,
\end{equation}
in $\F H^1$ and, as $n\to\I$,
\begin{equation}\label{eq:critpf3}
	\norm{u_{0,n}}_{\F \dot{H}^s}^2 -\sum_{j=1}^l \norm{\psi^j}_{\F \dot{H}^s}^2
	- \norm{W^j_n}_{\F \dot{H}^s}^2 \to 0
\end{equation}
for all $s\in [0,1]$.
Furthermore, there also exists $J\in \N \cup \{\I\}$ such that 
\eqref{eq:decompose4b} holds.
For all $l\ge1$ and $1\le i,j<J$, $i\neq j$,
\begin{equation}\label{eq:critpf4}
	\lim_{n\to\I}  |\xi_n^i-\xi_n^j| =\I.
\end{equation}
In addition,
\begin{equation}\label{eq:critpf5}
	\limsup_{n\to\I} \norm{e^{it \Delta}W_n^l}_{L^\rho((0,\I),L^{\gamma})}\to0
\end{equation}
as $l\to\I$.
Property \eqref{eq:critpf3} with $s=0$ yields
\[
	1 -\sum_{j=1}^l \norm{\psi^j}_{L^2}^2
	- \norm{W^j_n}_{L^2}^2 \to 0.
\]
In particular, $\sum_{j=1}^l \norm{\psi^j}_{L^2}^2\le1$.
Since $l$ is arbitrary, we have
\begin{equation}\label{eq:critpf6}
	\sum_{j=1}^\I \norm{\psi^j}_{L^2}^2\le1.
\end{equation}
Repeating this argument with $s=1$, we deduce from \eqref{eq:critpf0} that
\begin{equation}\label{eq:critpf7}
\begin{aligned}
	\sum_{j=1}^\I \norm{x\psi^j}_{L^2}^2
	\le{}& \limsup_{n\to\I} \norm{xu_{0,n}}_{L^2}^2 \\
	\le{}& \ell_{c}^{2(\frac2{p-1} - \frac{N}2)^{-1}}.
\end{aligned}
\end{equation}
Thus, $\ell(\psi^j)\le \ell_{c}$. 

We now claim that there exists $j_0$ such that $\ell(\psi^{j_0})=\ell_c$.
This claim completes the proof.
Indeed, if such $j_0$ exists then
the inequality
$$\Lebn{x\psi^{j_0}}2\le\ell_c^{(\frac2{p-1} - \frac{N}2)^{-1}},$$
which follows from \eqref{eq:critpf7},
yields $\Lebn{\psi^{j_0}}2\ge1$.
On the other hand, \eqref{eq:critpf6} gives $\Lebn{\psi^{j_0}}2\le1$.
Therefore, $\Lebn{\psi^{j_0}}2=1$.
Further, it then follows from \eqref{eq:critpf6} that $\psi^j\equiv0$ for all $j\neq j_0$.
We hence obtain \eqref{eq:cc1} with $\psi = \psi^{j_0}$ and $W_n=W_n^{j_0}$.
The property \eqref{eq:cc2} immediately follows from \eqref{eq:critpf3}
with $s=0,1$.
Thus, the lemma is proven.

Let us show the claim.
We assume for contradiction that 
$\ell(\psi^j)<\ell_c$ for all $j$.
Then, $\psi^j \in S_+$ by definition of $\ell_{c}$.
Let $V_j$ be a solution of \eqref{eq:NLS} with $V_j|_{t=0}=\psi^j$
and let
\[
	\widetilde{u}_n^l(t,x) := \sum_{j=1}^l V_j(t, x-t\xi^j_n) e^{i\xi_n^j\cdot x}
	e^{i\frac{t}2 |\xi_n^j|^2}.
\]
It follows that
\begin{align*}
	(i\d_t + \Delta) \widetilde{u}^{l}_n (t)
	={}& \sum_{j=1}^l (i\d_t + \Delta)(V_j(t, x-t\xi^j_n) e^{i\xi_n^j\cdot x}
	e^{i\frac{t}2 |\xi_n^j|^2}) \\
	={}& -\sum_{j=1}^l (|V_j|^{p-1} V_j )(t, x-t\xi^j_n) e^{i\xi_n^j\cdot x}
	e^{i\frac{t}2 |\xi_n^j|^2}.
\end{align*}
We also let
\[
	\widetilde{e}^l_n := (i\d_t + \Delta) \widetilde{u}_n^l
	+ |\widetilde{u}_n^l|^{p-1} \widetilde{u}_n^l.
\]

We shall choose $A>0$ independent of $l$ and $n$
and show, for all $l\gg1$, there exists $n_1=n_1(l)$ such that 
\begin{equation}\label{eq:critpf9}
	\norm{\widetilde{u}_n^l}_{L^\rho((0,\I),L^{\gamma})} \le A
\end{equation}
holds true for $n\ge n_1(l)$.
By Proposition \ref{prop:ScatterCond}, $V_j(0)=\psi^j \in S_+$ implies
 $\norm{V_j}_{L^\rho((0,\I),L^\gamma)}<\I$.
Let $\eps>0$ to be chosen later.
In light of \eqref{eq:critpf6} and \eqref{eq:critpf7},
there exists $j_1>0$ such that
\[
	\(\sum_{j=j_1+1}^\I \norm{\psi^j}_{\F H^1}^2\)^{1/2} \le \eps.
\] 
Let $\widetilde{v}_n^{l} = \widetilde{u}_n^l - \widetilde{u}_n^{j_1}$ for $l>j_1$.
By \eqref{eq:critpf4}, it hold for any fixed $l>j_1$ that
\begin{align*}
	\norm{\widetilde{v}_n^{l}(0)}_{\F H^1}^2
	={}& \Lebn{\sum_{j=j_1+1}^l \psi^j e^{i\xi_n^j \cdot x}}2^2
	+ \Lebn{x\sum_{j=j_1+1}^l \psi^j e^{i\xi_n^j \cdot x}}2^2\\
	\to{}& \sum_{j=j_1+1}^l\norm{ \psi^j}_{\F H^1}^2
	\le \eps^2 
\end{align*}
as $n\to\I$.
Hence, for each $l>j_1$, there exists $n_{1,1}=n_{1,1}(l)$ such that
\begin{equation}\label{eq:critpfm1}
	\norm{\widetilde{v}_n^{l}(0)}_{\F H^1}
	\le 2\eps
\end{equation}
for all $n>n_{1,1}$. 
Here, $\widetilde{v}_n^l$ solves
\begin{equation}\label{eq:critpfm2}
	\widetilde{v}_n^l(t)
	=e^{it\Delta} \widetilde{v}_n^l(0) +i \int_0^t
	e^{i(t-s)\Delta}(|\widetilde{v}_n^l|^{p-1}\widetilde{v}_n^l-\widetilde{f}_n^l)(s)ds,
\end{equation}
where
\[
	\widetilde{f}_n^l := 
	|\widetilde{v}_n^l|^{p-1}\widetilde{v}_n^l
	-\sum_{j=j_1+1}^l (|V_j|^{p-1} V_j )(t, x-t\xi^j_n) e^{i\xi_n^j\cdot x}
	e^{i\frac{t}2 |\xi_n^j|^2}.
\]
We apply non-admissible Strichartz' estimate \eqref{eq:Str} to \eqref{eq:critpfm2}
to yield
\begin{equation}\label{eq:critpfm4}
\begin{aligned}
	\norm{\widetilde{v}_n^l}_{L^\rho((0,\I),L^\gamma)}
	\le{}& \norm{e^{it\Delta} \widetilde{v}_n^l(0)}_{L^\rho((0,\I),L^\gamma)}
	+ C_1 \norm{\widetilde{v}_n^l}_{L^\rho((0,\I),L^\gamma)}^p\\
	&{}+ C_1 \norm{\widetilde{f}_n^l}_{L^{\widetilde{\rho}'}((0,\I),L^{\widetilde{\gamma}'})},
\end{aligned}
\end{equation}
where the constant $C_1$ depends only on $N$ and $p$.
By Lemma \ref{lem:decay} and \eqref{eq:critpfm1}, we have
\begin{equation}\label{eq:critpfm5}
	\norm{e^{it\Delta} \widetilde{v}_n^l(0)}_{L^\rho((0,\I),L^\gamma)}
	\le C \ell (\widetilde{v}_n^l(0)) \le C_2 \eps,
\end{equation}
where $C_2$ depends only on $N$ and $p$.
We next estimate $\widetilde{f}_n^l$.
For almost all $t>0$, 
$\widetilde{v}^{l}_n (t)$ and $V_l(t)$ belong to $L^{\gamma}(\R^N)$
for all $n$ and $l$.
Fix such $t$.
For any fixed $\kappa>0$,
there exist compact sets $\Omega^j \subset \R^N$ ($j\ge1$)
such that
$\norm{V_j(t)}_{L^\gamma((\Omega^j)^c)} \le \kappa$.
Let
\[
	\Omega_n^l := \bigcup_{j=j_1+1}^l (\Omega^j - t\xi_n^j).
\]
We write $\widetilde{f}_n^l=F_1+F_2$ with
\[
	F_1 = |\widetilde{v}_n^l|^{p-1}\widetilde{v}_n^l
	-{\bf 1}_{\Omega_n^l}|\widetilde{v}_n^l|^{p-1}\widetilde{v}_n^l,
\]
\[
	F_2 = {\bf 1}_{\Omega_n^l}|\widetilde{v}_n^l|^{p-1}\widetilde{v}_n^l
	-\sum_{j=j_1+1}^l (|V_j|^{p-1} V_j )(t, x-t\xi^j_n) e^{i\xi_n^j\cdot x}
	e^{i\frac{t}2 |\xi_n^j|^2}
	.
\]
Since 
\begin{align*}
	\norm{\widetilde{v}^l_n}_{L^\gamma((\Omega_n^l)^c)}
	={}&  \norm{\sum_{j=j_1+1}^l V_j(t, x-t\xi^j_n) e^{i\xi_n^j\cdot x}
	e^{i\frac{t}2 |\xi_n^j|^2}}_{L^\gamma((\Omega_n^l)^c)} \\
	\le{}& \sum_{j=j_1+1}^l \norm{V_j(t,\cdot-t \xi_n^j)}_{L^\gamma((\Omega_n^l)^c)} 
	\le  (l-j_1)\kappa,
\end{align*}
we have
\[
	\norm{F_1}_{
	L^{\widetilde{\gamma}'}} 
	\le \norm{\widetilde{v}^l_n}_{L^\gamma((\Omega_n^l)^c)}^p
	\le (l-j_1)^p\kappa^p.
\]
Since $\Omega^j$ is compact and since $t>0$, one sees from \eqref{eq:critpf4} that
there exists $n_4$ such that if $n\ge n_4$ then
\[
	(\Omega^j - t\xi_n^j) \cap (\Omega^k - t\xi_n^k) = \emptyset
\]
for all $j_1< j<k\le l$.
Therefore, for $n\ge n_4$,
\begin{align*}
	{\bf 1}_{\Omega_n^l}|\widetilde{v}_n^l|^{p-1}\widetilde{v}_n^l
	&{}=\sum_{j=j_1+l}^l {\bf 1}_{(\Omega^j - t\xi_n^j)}|\widetilde{v}_n^l|^{p-1}\widetilde{v}_n^l\\
	&{}=\sum_{j=j_1+l}^l |{\bf 1}_{(\Omega^j - t\xi_n^j)}\widetilde{v}_n^l|^{p-1}{\bf 1}_{(\Omega^j - t\xi_n^j)}\widetilde{v}_n^l.
\end{align*}
Let us recall a well known estimate;
there exists constant $C>0$ depending only on $p>1$ such that
\[
	\abs{|a|^{p-1}a-|b|^{p-1}b} \le C(|a|^{p-1}+|b|^{p-1})\abs{a-b}
\]
for any $a,b \in \C$. By this estimate,
\begin{align*}
	|F_2| \le C\sum_{j=j_1+1}^l& \(|{\bf 1}_{(\Omega^j - t\xi_n^j)}\widetilde{v}_n^l|^{p-1}
	+|V_j(t, x-t\xi^j_n)|^{p-1} \)\\
	& \times
	\abs{{\bf 1}_{(\Omega^j - t\xi_n^j)}\widetilde{v}_n^l-V_j(t, x-t\xi^j_n) e^{i\xi_n^j\cdot x}
	e^{i\frac{t}2 |\xi_n^j|^2}}.
\end{align*}
It holds that
\begin{align*}
	&\Lebn{{\bf 1}_{(\Omega^j - t\xi_n^j)}\widetilde{v}_n^l-V_j(t, x-t\xi^j_n) e^{i\xi_n^j\cdot x}
	e^{i\frac{t}2 |\xi_n^j|^2}}\gamma\\
	&{}\le
	\norm{\widetilde{v}_n^l-V_j(t, x-t\xi^j_n) e^{i\xi_n^j\cdot x}
	e^{i\frac{t}2 |\xi_n^j|^2}}_{L^\gamma{(\Omega^j - t\xi_n^j)}}
	+ \norm{V_j(t)}_{L^\gamma({(\Omega^j)^c})} \\
	&{}\le \sum_{j_1<m\le l,\,m\neq j} \norm{V_m(t, x-t\xi^m_n)}_{L^\gamma{(\Omega^j - t\xi_n^j)}}
	+ \norm{V_j(t)}_{L^\gamma({(\Omega^j)^c})} 
	\le (l-j_1)\kappa,
\end{align*}
where we have used the fact that $\Omega^j - t\xi_n^j \subset (\Omega^m - t\xi_n^m)^c$ 
to prove the
last inequality.
It then follows that
\[
	\norm{F_2}_{
	L^{\widetilde{\gamma}'}} 
	\le C\sum_{j=j_1+1}^l \(\Lebn{\widetilde{v}_n^l(t)}{\g}^{p-1}
	+\Lebn{V_j(t)}{\gamma}^{p-1}\) (l-j_1)\kappa \le C\kappa.
\]
Hence, we finally obtain $\norm{\widetilde{f}_n^l}_{L^{\widetilde{\gamma}'}}
\le C(\kappa + \kappa^p)$ for any $n\ge n_4$.
Thus, for almost all $t>0$, $\Lebn{\widetilde{f}_n^l(t)}{\widetilde{\gamma}'} \to 0$ as $n\to\I$.
On the other hand,
\[
	\Lebn{\widetilde{f}_n^l(t)}{\widetilde{\gamma}'}
	\le \Lebn{\widetilde{v}_n^l(t)}\gamma^p +
	\sum_{j=j_1+1}^l \Lebn{V_j(t)}\gamma^p
	\le C(p,l) \sum_{j=j_1+1}^l \Lebn{V_j(t)}\gamma^p
\]
is valid for all $n$.
Since the right hand side is independent of $n$ and belongs to $L^{\widetilde{\rho}'}((0,\I))$,
we see from Lebesgue's convergence theorem that there exists $n_{1,2}$ such that
if $n\ge n_{1,2}$ then
\begin{equation}\label{eq:critpfm6}
	\norm{\widetilde{f}^l_n}_{L^{\widetilde{\rho}'}((0,\I),L^{\widetilde{\gamma}'})} \le \eps.
\end{equation}
Plugging \eqref{eq:critpfm5} and \eqref{eq:critpfm6} to
\eqref{eq:critpfm4}, we obtain
\[
	\norm{\widetilde{v}_n^l}_{L^\rho((0,\I),L^\gamma)}
	\le C_3 \eps + C_1 \norm{\widetilde{v}_n^l}_{L^\rho((0,\I),L^\gamma)}^p
\] 
for $n>n_1:=\max(n_{1,1},n_{1,2})$, where
 $C_3$ is independent of $l$ and $n$.
We choose $\eps = \eps(C_1,C_3)$ so small that this inequality gives
\[
	\norm{\widetilde{v}_n^l}_{L^\rho((0,\I),L^\gamma)}
	\le 2 C_3 \eps.
\]
For such $\eps$, we obtain the estimate
\[
	\norm{\widetilde{u}_n^l}_{L^\rho((0,\I),L^\gamma)}
	\le \sum_{j=1}^{j_1} \norm{V_j}_{L^\rho((0,\I),L^\gamma)}
	+ 2 C_3 \eps =: A
\]
for any $l>j_1$ and $n>n_1(l)$, which is \eqref{eq:critpf9}.

Let $\eps_0(A)$ be a number given by the long-time perturbation theory
(Proposition \ref{prop:lpt} (1)).
By \eqref{eq:critpf1} and \eqref{eq:critpf5}, 
there exists a number $l_1$ such that
if $l\ge l_1$ then we can choose $n_2(l)$ so that
\begin{equation}\label{eq:critpf10}
	\norm{e^{it\Delta} (u_{0,n} - \widetilde{u}_n^l(0))}_{L^\rho((0,\I),L^\gamma)}
	= \norm{e^{it\Delta} W_n^{l}}_{L^\rho((0,\I),L^\gamma)}
	\le \eps_0(A)
\end{equation}
for all $n\ge n_2(l)$.
We fix $l> \max(j_1,l_1)$.
Arguing as in the proof of \eqref{eq:critpfm6},
we see that
there exists $n_3(l)$ such that
\begin{equation}\label{eq:critpf11}
	\norm{\widetilde{e}^l_n}_{L^{\widetilde{\rho}'}((0,\I),L^{\widetilde{\gamma}'})} \le 
	\eps_0(A)
\end{equation}
holds for $n\ge n_3(l)$.

We are now in a position to complete the proof of the claim.
Choose $l> \max(j_1,l_1)$ and $n\ge \max(n_1(l),n_2(l),n_3(l))$.
Using the long-time perturbation theory,
we deduce from \eqref{eq:critpf9}, \eqref{eq:critpf10}, and \eqref{eq:critpf11}
that
$\norm{u_n}_{L^\rho((0,\I),L^\gamma)}\le c(A)<\I$, where $u_n$ is a solution to \eqref{eq:NLS}
with $u_n|_{t=0}=u_{0,n}$.
Thanks to Proposition \ref{prop:ScatterCond}, this implies
$u_{0,n} \in S_+$.
However, this contradicts with the definition of $u_{0,n}$. 
\end{proof}
\begin{proof}[Proof of Theorem \ref{thm:main1}]
Choose a sequence $\{u_{0,n}\}_n \subset \F H^1\setminus S_+$
so that $\ell(u_{0,n})\le \ell_c + \frac1n$.
By scaling, we can assume $\Lebn{u_{0,n}}2=1$.
We now apply Lemma \ref{lem:oneprofile}.
Then, there exist a subsequence of $\{u_{0,n}\}$, which is denoted again by $\{u_{0,n}\}$,
a function $\psi \in \F H^1$ with $\ell(\psi)=\ell_{c}$ and
$\Lebn{\psi}2=1$, and 
sequences $\{W_n\}_{n}\subset \F H^1$ and $\{\xi_n\}_{n} \subset \R^N$ such that
\eqref{eq:cc1} and \eqref{eq:cc2} hold.

If $\psi\in S_+$ then it follows from Proposition \ref{prop:ScatterCond} that
$\norm{V}_{L^\rho((0,\I),L^\gamma)}<\I$, where $V$ is a solution of 
\eqref{eq:NLS} with $V(0)=\psi$.
Now, apply Proposition \ref{prop:lpt} with $\widetilde{u}(t,x)=V(t,x-\xi_n t)
e^{i\xi_n\cdot x}e^{i\frac{t}2|\xi_n|^2}$.
Remark that $e\equiv0$. One also verifies from \eqref{eq:cc2} that
\[
	\norm{e^{it\Delta} (u_{0,n}-\widetilde{u}(0))}_{L^\rho((0,\I),L^\gamma)}
	\le \norm{e^{it\Delta} W_n}_{L^\rho((0,\I),L^\gamma)} \to 0
\]
as $n\to0$. Hence, by means of the long-time perturbation theory,
we see $u_{0,n} \in S_+$ for large $n$,
which is a contradiction.
Thus, $\psi\in \F H^1 \setminus S_+$.

Finally, we prove $	\ell_{c}= \inf\{ \ell (f) \ |\ f \in \F H^1 \setminus S \}$.
For this, it suffices to show
\begin{equation}\label{eq:end1}
	\ell_{c}
	= \inf_{f \in \F H^1 \setminus S_-} \ell (f)
\end{equation}
since
\[
	\inf_{f \in \F H^1 \setminus S} \ell (f)
	= \min \(\inf_{f \in \F H^1 \setminus S_+} \ell (f)
	,\inf_{f \in \F H^1 \setminus S_-} \ell (f)\).
\]
Let us now recall that
if $u(t,x)$ is a solution then $\overline{u}(-t,x)$ is also a solution.
This implies $\overline{u_0}\in S_-$ if and only if $u_0 \in S_+$.
If $\ell_c > \inf\{ \ell (f) \ |\ f \in \F H^1 \setminus S_- \}$
then there exists $w_0 \in \F H^1 \setminus S_-$ such that $\ell(w_0)<\ell_c$.
However, it then holds that
$\overline{w_0}\in \F H^1 \setminus S_+$ and $\ell(\overline{w_0})<\ell_c$,
which contradicts to the definition of $\ell_c$.
Hence, $\ell_c \le \inf\{ \ell (f) \ |\ f \in \F H^1 \setminus S_- \}$.
A similar argument shows $\ell_c \ge \inf\{ \ell (f) \ |\ f \in \F H^1 \setminus S_- \}$.
We obtain \eqref{eq:end1}.
\end{proof}

\subsection*{Acknowledgments}
The author expresses his deep gratitude
to Professor Masahito Ohta for fruitful discussions. 
The author also thanks Professor Masaya Maeda for giving him valuable comments
on preliminary version of the manuscript.
This research is supported by Japan Society for the Promotion of Science(JSPS)
Grant-in-Aid for Young Scientists (B) 24740108.

\providecommand{\bysame}{\leavevmode\hbox to3em{\hrulefill}\thinspace}
\providecommand{\MR}{\relax\ifhmode\unskip\space\fi MR }
\providecommand{\MRhref}[2]{%
  \href{http://www.ams.org/mathscinet-getitem?mr=#1}{#2}
}
\providecommand{\href}[2]{#2}

\end{document}